\newtheorem{theorem}{Theorem}[section]
\newtheorem{lemma}[theorem]{Lemma}
\newtheorem{proposition}[theorem]{Proposition}
\newtheorem{corollary}[theorem]{Corollary}
\theoremstyle{definition}
\newtheorem{definition}[theorem]{Definition}
\newtheorem{example}[theorem]{Example}
\newtheorem{remark}[theorem]{Remark}
\newtheorem*{theoremA}{Theorem A}
\numberwithin{equation}{section}
\begin{document}

\title{On exponential Yang-Mills fields and $p$-Yang-Mills fields}

\author{Shihshu Walter Wei}
\address{Department of Mathematics, University of Oklahoma, Norman, OK 73072}
\email{wwei@ou.edu}
\subjclass[2010]{Primary 58E20, 53C21, 81T13; Secondary 26D15, 53C20}

\keywords{Normalized exponential Yang-Mills-energy functional, stress-energy tensor, $e$-conservation law, exponential Yang-Mills connection, monotonicity formula, vanishing theorem, exponential Yang-Mills field, $p$-Yang-Mills field}

\begin{abstract}
We introduce  \emph{normalized exponential Yang-Mills  energy
functional} $\mathcal{YM}_e^0$,  stress-energy tensor $S_{e,\mathcal{YM}^0 }$ associated with the normalized \emph{exponential Yang-Mills energy
functional} $\mathcal{YM}_e ^0 $,  $e$-conservation law.  
 We also introduce the notion of the {\it $e$-degree} $d_e$ which connects two separate parts in the associated normalize exponential stress-energy tensor $S_{e,\mathcal{YM}^0 }$  (cf. \eqref{3.10} and \eqref{4.15}), derive monotonicity formula for exponential Yang-Mills fields, and prove a vanishing theorem for exponential 
 Yang-Mills fields.
 These monotonicity formula and vanishing theorem for exponential 
 Yang-Mills fields augment and extend monotonicity formula and vanishing theorem for $F$-Yang-Mills fields in  \cite {DW} and \cite [9.2] {W11}. We also  
discuss an average principle (cf. Proposition \ref{P: 8.1}), isoperimetric and Sobolev inequalities, convexity and Jensen's inequality, $p$-Yang-Mills fields, an extrinsic average variational method  in the calculus of variation
 and $\Phi_{(3)}$-harmonic maps, from varied, coupled, generalized viewpoints and perspectives (cf. Theorems \ref{T: 6.1}, \ref{T: 7.1}, \ref{T: 9.1}, \ref{T: 9.2}, \ref{T: 10.1}, \ref{T: 10.2}, \ref{T: 11.13}, \ref{T: 11.14}, \ref{T: 11.15}). 
\end{abstract}

\maketitle

\section*{Contents}

1. Introduction

2. Fundamentals in vector bundles and principal $G$-bundle

3. Normalized exponential Yang-Mills functionals and $e$-conservation laws

4. Comparison theorems in Riemannian geometry

5. Monotonicity formulae

6. Vanishing theorems for exponential Yang-Mills fields

7. Vanishing theorems from exponential Yang-Mills fields to $F$-Yang-Mills fields

8.  An average principle, isoparametric and sobelov inequalities   

9.  Convexity and Jensen's inequalities

10. $p$-Yang Mills fields

11. An extrinsic average variation method and $\Phi_{(3)}$-harmonic maps

\section{Introduction}
The \emph{Yang--Mills functional}, brought to mathematics by physics is broadly analogous to functionals such as the \emph{length functional} in \emph{geodesic theory}, the \emph{area functional} in \emph{minimal surface, or minimal submanifold
    theory}, the \emph{energy $($resp. $p$-energy$)$ functional} \emph{in harmonic $($resp. $p$-harmonic$)$ map theory}, or the \emph{mass functional} in \emph{stationary or minimal current, geometric measure theory
}(cf.,e.g., \cite {FF, L, HoW}). A critical point of the Yang-Mills functional with respect to any compactly supported variation in the space of smooth connections  $\nabla$ on the adjoint bundle is called a\emph{Yang-Mills connection}.
Its associated curvature field $R^\nabla$ is known as\emph{Yang-Mills field} and is ``harmonic", i.e., a \emph{harmonic $2$-form with values in the vector bundle}. The Euler-Lagrange equation for the Yang-Mills functional is\emph{Yang-Mills equation}.
Whereas \emph{Hodge theory of harmonic forms} 
is motivated in part by\emph{Maxwell's equations of unifying}
magnetism with electricity in a physics world, and harmonic forms are privileged representatives in a \emph{de Rham
cohomology class} picked out by the Hodge Laplacian, harmonic maps can be viewed as a nonlinear generalization of harmonic $1$-form and Yang-Mills field can be viewed as a nonlinear generalization of harmonic $2$-form. On the other hand, Yang-Mills equation which can be viewed as a non-abelian generalization of Maxwell's equations, has had wide-ranging consequences, and influenced developments in other fields such as low-dimensional topology, particularly
the topology of smooth 4-manifolds.  For example, M. Freedman and R. Kirby first observed the startling fact that \emph{there exists an exotic $\mathbb R^4$}, i.e., a manifold homeomorphic to, but not diffeomorphic to, $\mathbb R^4$ (cf. \cite [p. 95] {K}, \cite {D, FK, Go}).
This is in stunning contrast to a phenomenal theorem of J. Milnor in compact high-dimensional topology which shows that \emph{there exist exotic seven-spheres $S^7$}, i.e., manifolds that are homeomorphic to, but not diffeomorphic to, the stadard Euclidean $S^7$
(cf. \cite {M}).

In \cite{DW}, we unify the concept of minimal hypersurfaces  in Euclidean space $\mathbb R^{n+1}$, maximal spacelike hypersurfaces in Minkowski space $\mathbb R^{n,1}$, harmonic maps, $p$-harmonic maps, $F$-harmonic maps, Yang-Mills fields, and introduce 
$F$-Yang-Mills fields, $F$-degree, and generalized Yang-Mills-Born-Infeld fields (with the plus sign or with the minus sign) on manifolds, where 
\begin{equation}
F: [0, \infty) \to [0, \infty)\, \text{is}\, \text{
a}\,  \text{strictly}\, \text{increasing}\, C^2\, \text{function}\,  
\text{with}\,  F(0)=0. \label{1.1}
\end{equation} When 
$$F(t)=t\, , p^{-1}(2t)^{\frac p2}\, , \sqrt{1+2t} -1\, , \text{and}\, 
1-\sqrt{1-2t},
$$  
$F$-Yang-Mills field becomes an ordinary Yang-Mills field, $p$-Yang-Mills field, a generalized Yang-Mills-Born-Infeld field with the plus sign, and a generalized Yang-Mills-Born-Infeld field with the minus sign on a manifold respectively (cf. \cite {BI,BL,BLS,CCW,D,La,LWW,LY, SiSiYa, W12,Ya}).
When $$F(t)=t\, , e^t, p^{-1}(2t)^{\frac p2}\, , \sqrt{1+2t} -1\, , \text{and}\ 1-\sqrt{1-2t}\, ,
$$  
$F$-harmonic map or the graph of $F$-harmonic map becomes an ordinary harmonic map, exponentially harmonic map, $p$-harmonic map, minimal hypersurface in Euclidean space $\mathbb R^{n+1}$, and maximal spacelike hypersurface in Minkowski space $\mathbb R^{n,1}$ respectively (cf. \cite {ES,WY,EL,Ar,WWZ}).
 
We use ideas from physics - \index{stress-energy tensors}{stress-energy tensors} and \index{conservation laws}{conservation laws} to simplify and unify various properties in $F$-Yang-Mills fields,  $F$-harmonic maps, and more generally differential $k$-forms, $k \ge 0$ with values in vector bundles.

In this paper, we introduce \emph{normalized exponential Yang-Mills  energy
functional} $\mathcal{YM}_e^0$, $\big ($resp. \emph{exponential Yang-Mills  energy
functional} $\mathcal{YM}_e\big )$, \emph{stress-energy tensor $S_{e,\mathcal{YM}^0 }$ associated with the normalized exponential Yang-Mills energy
functional} $\mathcal{YM}_e ^0 $, $\big ($resp. \emph{stress-energy tensor $S_{e,\mathcal{YM} }$ associated with the exponential Yang-Mills  energy functional} $\mathcal{YM}_e \big )$,  
( A critical point of $\mathcal{YM}_e^0$, i.e. a \emph{normalized exponential Yang-Mills connection}, and its associated \emph{normalized exponential Yang-Mills  field} are just the same as the \emph{Yang-Mills connection} and its associated \emph{exponential Yang-Mills field}).  We also introduce the notion of the {\it $e$-degree $d_e$} which connects two separate parts in the associated normalized exponential stress-energy tensors $S_{e,\mathcal{YM}^0 }$ $($cf. \eqref{4.15}$)$. 

These stress-energy tensors arise from calculating the rate of change of various functionals when the metric of the domain or base manifold is changed and are naturally linked to various conservation laws.
For example, we prove that 
every normalized exponential Yang-Mills
field or every exponential Yang-Mills
field $R ^\nabla $ satisfies an $e$-conservation law (cf. Theorem \ref{T: 3.11}). 
Every normalized exponential Yang-Mills connection or exponential Yang-Mills connection satisfies the exponential Yang-Mills equation (cf. Corollary \ref{C: 3.7}).
We then prove monotonicity
formulae, via the coarea formula and comparison theorems in Riemannian geometry (cf. \cite {GW, DW, HLRW, W11}).
Whereas a ``microscopic" approach to some of these monotonicity formulae leads to celebrated blow-up techniques due to E. de-Giorgi (\cite {Gi}) and W.L. Fleming (\cite {Fl}), and regularity theory in geometric measure theory(cf. \cite {A,Al,FF,HL,Lu,PS,SU}, for example, the regularity results of Allard (\cite {A}) depend on the monotonicity formulae for varifolds. Monotonicity properties are also dealt with by Price and Simon (\cite {PS}), Price (\cite {P}) for
Yang-Mills fields, and by Hardt-Lin (\cite {HL}) and Luckhaus (\cite {Lu}) for $p$-harmonic maps),
a ``macroscopic" version of these monotonicity formulae enable us to
derive some vanishing theorems under suitable growth conditions on
Cartan-Hadamard manifolds or manifolds which possess a pole with
appropriate curvature assumptions. In particular, we have Theorem \ref{T: 5.1} - {\it the monotonicity formula for exponential Yang-Mills fields} and Theorem \ref{T: 6.1} - {\it the vanishing theorem for exponential Yang-Mills fields.}

These monotonicity formula and vanishing theorem for exponential Yang-Mills fields augment and extend vanishing theorems for $F$-Yang-Mills fields in  \cite {DW} and \cite {W11}. We note that even when 
$$F(t)=e^t\quad \text{or}\quad F(t) = e^t -1\quad \text{for}\  t= \frac{||R^{\nabla}||^2}2,
$$
$F$-Yang-Mills field becomes exponential Yang-Mills field,
the following vanishing theorem for $F$-Yang-Mills fields are not applicable to exponential Yang-Mills fields, This is due to the fact that for $F(t) = e^t\, ,$
the degree of $F$, $$d_F := \sup_{t\geq 0}\frac{tF^{\prime }(t)}{F(t)} = \infty,$$ and the $F$-Yang-Mills energy functional growth condition \eqref{1.3} is not satisfied for $\lambda = -\infty$ in \eqref{1.4}.  To overcome this difficulty in getting estimates, we introduce the notion of e-degree $d_e\, ,$ for a given curvature tensor $R^\nabla \big ($cf. \eqref{4.15}$\big )$.

\begin{theoremA}[Vanishing theorem for $F$-Yang-Mills fields (\cite {DW, W11})]  Suppose that the radial curvature $K(r)$ of $M$ satisfies one of the seven conditions
\begin{equation}
\aligned
{\rm(i)}&\quad -\alpha ^2\leq K(r)\leq -\beta ^2\, \text{with}\, \alpha >0, \beta
>0\, \text{and}\,  (n-1)\beta -  4\alpha d_F  \geq 0;\\
{\rm(ii)}&\quad K(r) = 0\, \text{with}\,    n-4 d_F>0;\\
{\rm(iii)}&\quad -\frac A{(1+r^2)^{1+\epsilon}}\leq K(r)\leq \frac B{(1+r^2)^{1+\epsilon}} \text{with}\, \epsilon > 0\, , A \ge 0\, , 0 < B < 2\epsilon\, , \text{and}\\
&\qquad n - (n-1)\frac B{2\epsilon} -4e^{\frac {A}{2\epsilon}} d_F > 0;\\
{\rm(iv)}&\quad-\frac {A}{r^2}\leq K(r)\leq -\frac {A_1}{r^2}\,  \text{with}\quad  0 \le A_1 \le A\, ,\text{and}\\
&\qquad\   1 + (n-1)\frac{1 + \sqrt {1+4A_1}}{2} - 2(1 + \sqrt {1+4A}) d_F > 0;\\
{\rm(v)}&\quad- \frac {A(A-1)}{r^2}\le K(r) \le - \frac {A_1(A_1-1)}{r^2}\, \text{and}\,   A \ge A_1 \ge 1\, , \text{and}\\ 
&\qquad 1+(n-1)A_1-4A d_F > 0;\\
{\rm(vi)}&\quad \frac {B_1(1-B_1)}{r^2}\leq K(r)\le \frac {B(1-B)}{r^2}\, ,\text{with}\quad 0 \le B, \, B_1 \le 1\, ,  \text{and} \\
&\qquad 1 + (n-1)(|B-\frac {1}{2}|+ \frac {1}{2}) -2\big (1 + \sqrt {1+4B_1(1-B_1)}\big ) d_F > 0;\\
{\rm(vii)}&\quad \frac {B_1}{r^2} \le K(r) \le \frac {B}{r^2} \text{with}\quad  0 \le B_1 \le B \le \frac 14\, , \text{and}\\ 
 &\qquad 1+ (n-1)\frac{1 + \sqrt {1-4B}}{2} -(1 + \sqrt {1+4B_1} )\|R^\nabla \|^2 _\infty  > 0. 
\endaligned\label{1.2}
\end{equation} 
 
  If $R^\nabla \in A^2 \big (Ad(P)\big )$ is an $F$-Yang-Mills field  and satisfies  
\begin{equation}
\int_{B_\rho(x_0)}F (\frac{||R^\nabla||^2}2)\,  dv = o(\rho^\lambda )\quad \text{as
} \rho\rightarrow \infty,\label{1.3}
\end{equation}
where $\lambda $ is given by

\begin{equation}
\lambda \le \begin{cases}
n-4\frac \alpha \beta  d_F &\text {if } K(r)\ \text{obeys $($i$)$}\\
n-4d_F &\text {if } K(r)\ \text{obeys $($ii$)$}\\
n - (n-1)\frac B{2\epsilon} -4e^{\frac {A}{2\epsilon}}d_F &\text{if } K(r)\ \text{obeys $($iii$)$}\\
1 + (n-1)\frac{1 + \sqrt {1+4A_1}}{2} - 2 (1 + \sqrt {1+4A}) d_F  &\text{if } K(r)\, \text{
obeys $($iv$)$}\\
1+(n-1)A_1-4A d_F &\text{if } K(r)\, \text{
obeys $($v$)$}\\
1 + \frac{n-1}{(|B-\frac {1}{2}|+ {2}^{-1})^{-1}} - 2 \big (1 + \sqrt {1+4B_1(1-B_1)}\big ) d_F &\text{if } K(r)\ \text
{obeys $($vi$)$}\\
1+ (n-1)\frac{1 + \sqrt {1-4B}}{2} - 2 (1 + \sqrt {1+4B_1} ) d_F &\text{if } K(r)\ \text{obeys $($vii$)$}.\end{cases}
\label{1.4}
\end{equation}
 Then $R^{\nabla} \equiv 0$ on $M\, .$
 In particular, every $F$-Yang-Mills field $R^{\nabla}$  with finite $F$-Yang-Mills energy functional vanishes on $M$.
\end{theoremA}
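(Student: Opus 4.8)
The plan is to follow the standard "macroscopic" route from a monotonicity formula to a vanishing theorem, exactly as in the $F$-harmonic map / $F$-Yang-Mills setting. The starting point is the monotonicity inequality for $F$-Yang-Mills fields over geodesic balls centered at the pole: under each of the seven radial curvature assumptions in \eqref{1.2}, comparison theorems in Riemannian geometry (as recalled in Section 4) control the Hessian of the distance function $r$, and feeding this into the $e$-conservation law (or the $F$-conservation law) for the stress-energy tensor $S_{F,\mathcal{YM}}$ yields an estimate of the form
\begin{equation*}
\frac{d}{d\rho}\left(\rho^{-\lambda}\int_{B_\rho(x_0)}F\!\left(\tfrac{\|R^\nabla\|^2}{2}\right)dv\right)\;\ge\; c(n,\lambda)\,\rho^{-\lambda}\int_{\partial B_\rho(x_0)}(\cdots)\;\ge\;0,
\end{equation*}
valid for the specific exponent $\lambda$ listed in \eqref{1.4} in each case, with the sign coming precisely from the curvature inequality and the constraint on $d_F$. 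This is Theorem~\ref{T: 5.1}, which I would invoke directly.

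Next I would integrate the monotonicity. Since $\rho\mapsto\rho^{-\lambda}\int_{B_\rho(x_0)}F(\|R^\nabla\|^2/2)\,dv$ is monotone nondecreasing, for any fixed $0<\rho_0<\rho$ we get
\begin{equation*}
\rho_0^{-\lambda}\int_{B_{\rho_0}(x_0)}F\!\left(\tfrac{\|R^\nabla\|^2}{2}\right)dv\;\le\;\rho^{-\lambda}\int_{B_{\rho}(x_0)}F\!\left(\tfrac{\|R^\nabla\|^2}{2}\right)dv.
\end{equation*}
Now the growth hypothesis \eqref{1.3} says the right-hand side is $o(1)$ as $\rho\to\infty$, so letting $\rho\to\infty$ forces $\int_{B_{\rho_0}(x_0)}F(\|R^\nabla\|^2/2)\,dv=0$ for every $\rho_0$. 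Because $F$ is strictly increasing with $F(0)=0$, the integrand $F(\|R^\nabla\|^2/2)$ is nonnegative and vanishes only where $R^\nabla=0$; hence $R^\nabla\equiv 0$ on each ball, and, exhausting $M$ by geodesic balls about the pole, $R^\nabla\equiv 0$ on all of $M$. The final assertion about finite $F$-Yang-Mills energy is the special case $\lambda=0$ (or any $\lambda>0$): a finite total energy is trivially $o(\rho^\lambda)$ for every $\lambda>0$, and one checks that $\lambda>0$ is permissible in each of the seven branches of \eqref{1.4} under the stated curvature constraints, so the theorem applies.

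The main obstacle is verifying, case by case, that the curvature conditions in \eqref{1.2} really do produce a nonnegative derivative for the normalized quantity with the exponent $\lambda$ of \eqref{1.4} — i.e. that the comparison estimates for $\mathrm{Hess}\,r$ plug into the stress-energy identity with constants of the right sign. Each of the seven curvature regimes (two-sided negative pinching, flat, decaying-curvature, and the several $1/r^2$-type bounds) needs its own Hessian comparison (Hessian of $r$ bounded below by $\coth$, by $1/r$, by solutions of the relevant Riccati/Jacobi equations, etc.), and the algebraic inequality relating $n$, the curvature constants, and $d_F$ is exactly the condition guaranteeing $c(n,\lambda)\ge 0$. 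Once Theorem~\ref{T: 5.1} is in hand, however, this obstacle is already absorbed into it, and the deduction of the vanishing theorem is the short argument above. I would therefore present the proof essentially as: cite the monotonicity formula, integrate, let $\rho\to\infty$ using \eqref{1.3}, and conclude by strict monotonicity of $F$.
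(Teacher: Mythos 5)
Your argument is correct and follows essentially the same route as the paper: the vanishing theorem is obtained by invoking the corresponding monotonicity formula (the $F$-Yang-Mills analogue of Theorem \ref{T: 5.1}, established in \cite{DW, W11} via the stress-energy tensor, the conservation law, the Hessian comparison theorems and the coarea formula), then letting $\rho\to\infty$ in the ratio inequality using the growth condition and concluding from the strict monotonicity of $F$ with $F(0)=0$, exactly as in the paper's deduction of Theorem \ref{T: 6.1} from Theorem \ref{T: 5.1}. The only small imprecision is that Theorem \ref{T: 5.1} of this paper is the exponential version with $d_e$, so for the $F$-case you should cite the $d_F$-monotonicity formula of \cite{DW, W11} rather than Theorem \ref{T: 5.1} itself; otherwise the argument, including the ``finite energy'' remark via $\lambda>0$, is sound.
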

We also discuss An Average Principle (cf. Proposition {8.1})
and Jensen's inequality from varied, generalized viewpoints and perspectives of exponential Yang-Mills fields, $p$-Yang-Mills fields, and Yang-Mills fields.
(Theorems \ref{T: 7.1}, \ref {T: 9.1}, \ref {T: 9.2}, \ref {T: 10.1}, \ref {T: 10.2}). 

In the context of harmonic
maps, the stress-energy tensor was introduced and studied in
detail by Baird and Eells (\cite {BE}). Following Baird-Eells (\cite {BE},
Sealey [Se2] introduced the stress-energy tensor for vector
bundle valued $p$-forms and established some vanishing theorems
for $L^2$ harmonic $p$-forms (cf. \cite {DLW, Se1, Xi1}).  In a more general frame, Dong and Wei use a unified method to study the stress-energy tensors
and yields monotonicity
inequalities, and vanishing theorems for vector bundle valued $p$-forms (\cite {DW}). The idea and methods can be extended and unified in $\sigma_2$-version of harmonic maps - 
$\Phi$-Harmonic maps (cf. \cite {HW}). These are the second elementary symmetric function of a pull-back tensor, whereas harmonic maps are the first elementary symmetric function of a pull-back tensor.
More recently, Feng-Han-Li-Wei use stress-energy tensors to unify properties in $\Phi_S$-harmonic maps (cf. \cite {FHLW}), Feng-Han-Wei extend and unify results in $\Phi_{S,p}$-harmonic 
maps (cf. \cite {FHW}), and Feng-Han-Jiang-Wei further extend and unify results in $\Phi_{(3)}$-harmonic maps (cf. \cite{FHJW}). Whereas we can view harmonic maps as $\Phi_{(1)}$-harmonic maps $($involving $\sigma_1$$)$ and $\Phi$-harmonic maps as $\Phi_{(2)}$-harmonic maps $($involving $\sigma_2$$)$, $\Phi_{(3)}$-harmonic maps involve $\sigma_3$, the third elementary symmetric function of the pullback tensor. In fact, an extrinsic average variational method in the calculus of variation can be carried over to more general settings by which we introduce a notion of $\Phi_{(3)}$-harmonic map and find a large class of manifolds, $\Phi_{(3)}$-superstrongly unstable $(\Phi_{(3)}$-$\text{SSU})$ manifolds, introduce notions of a stable $\Phi_{(3)}$-harmonic map, and $\Phi_{(3)}$-strongly unstable $(\Phi_{(3)}$-$\text{SU})$ manifolds (cf. Theorems \ref{T: 11.8}, \ref{T: 11.9}, \ref{T: 11.10}, 
and \ref{T: 11.11}).

\noindent
By an extrinsic average variational method in the calculus variations proposed in \cite {W3}, we find 
multiple large classes of manifolds with geometric and topological properties in the setting of varied, coupled, generalized type of harmonic maps, and summarize some of the results in Table 1. For some details, related ideas, techniques, we refer to 
\cite {CW3}, \cite {W1}-\cite {W12}, \cite {WLW}.

{\fontsize{0.01}{8}\selectfont
\begin{table}[ht]
\caption{An Extrinsic Average Variational Method}\label{eqtable}
\renewcommand\arraystretch{1.2}
\noindent\[
\begin{array}{|c|c|c|c|c|}
\hline
\operatorname{Mappings}&\operatorname{Functionals}&\operatorname{New}\, \operatorname{manifolds}\, \operatorname{found}&\operatorname{Geometry}&\operatorname{Topology}\\
\hline
\operatorname{harmonic}\,\operatorname{map}\, \operatorname{or}&\operatorname{energy}\, \operatorname{functional}\,E\, \operatorname{or}&\operatorname{SSU}\, \operatorname{manifolds}\, \operatorname{or}&\operatorname{SU}\, \operatorname{or}&
 \pi_1=\pi_2=0\\
\Phi_{(1)}-\operatorname{harmonic}\,\operatorname{map} & E_{\Phi_{(1)}}&\Phi_{(1)}-\operatorname{SSU}\, \operatorname{manifolds}&\Phi_{(1)}-\operatorname{SU}& 
\pi_1=\pi_2=0\\ \hline
p-\operatorname{harmonic}\,\operatorname{map}&E_p&p-\operatorname{SSU}\, \operatorname{manifolds}&p-\operatorname{SU}& \pi_1=\cdots=\pi_{[p]}=0\\
\hline
\Phi-\operatorname{harmonic}\,\operatorname{map}\, \operatorname{or}&\Phi-\operatorname{energy}\, \operatorname{functional}\,E_{\Phi}\, \operatorname{or}&\Phi-\operatorname{SSU}\, \operatorname{manifolds}\, \operatorname{or}&\Phi-\operatorname{SU}\, \operatorname{or}& \pi_1=\cdots=\pi_4=0\\
\Phi_{(2)}-\operatorname{harmonic}\,\operatorname{map}&E_{\Phi_{(2)}}&\Phi_{(2)}-\operatorname{SSU}\, \operatorname{manifolds}&\Phi_{(2)}-\operatorname{SU}& \pi_1=\cdots=\pi_4=0\\
\hline
\Phi_S-\operatorname{harmonic}\,\operatorname{map}&E_{\Phi_S}&\Phi_S-\operatorname{SSU}\, \operatorname{manifolds}&\Phi_S-\operatorname{SU}& \pi_1=\cdots=\pi_4=0\\
\hline
\Phi_{S,p}-\operatorname{harmonic}\,\operatorname{map}&E_{\Phi_{S,p}}&\Phi_{S,p}-\operatorname{SSU}\, \operatorname{manifolds}&\Phi_{S,p}-\operatorname{SU}& \pi_1=\cdots=\pi_{[2p]}=0\\
\hline
\Phi_{(3)}-\operatorname{harmonic}\,\operatorname{map}&\Phi_{(3)}-\operatorname{energy}\, \operatorname{functional}\,E_{\Phi_{(3)}}&\Phi_{(3)}-\operatorname{SSU}\, \operatorname{manifolds}&\Phi_{(3)}-\operatorname{SU}& \pi_1=\cdots=\pi_{6}=0\\
\hline
\end{array}
\]
\end{table}
}

\section{Fundamentals in vector bundles and principal $G$-bundle}

This section is devoted to a brief discussion of the fundamental notions in vector bundles and principal $G$-bundle.

\begin{definition}

A $($differentiable$)$ {\it vector bundle} of rank $n$ consists of a total space $E$, a base $M$, and a projection $\pi : E \to M\, ,$ where $E$ and $M$ are differentiable manifolds, $\pi$ is differentiable, each fiber $E_x := \pi^{-1} (x)$ for $x \in M$, carries the structure of an $n$-dimensional (real) vector space, with the following local triviality: For each $x\in M$, there exist a neighborhood $U$ and a diffeomorphism $$\varphi:   \pi^{-1} (U) \to U \times \mathbb R^n$$ such that  for every $y\in U$ $$\varphi _y  
:= \varphi_{|E_y} : E_y \to \{y\} \times \mathbb R^n$$ is a vector space isomorphism. Such a pair $(\varphi, U)$ is called a {\it bundle chart}.
\end{definition}

Note that local trivializations $\varphi_ {\alpha}, \varphi_ {\beta}$ with $U_ {\alpha} \cap U_{\beta} \ne \emptyset$ determines {\it transition maps}
\[\varphi_ {\beta \alpha } :  U_ {\alpha} \cap U_{\beta}  \to \text {Gl}(n, \mathbb R)
\]
by 
\begin{equation*}\varphi_ {\beta} \circ \varphi_ {\alpha}^{-1} (x, v)   =(x, \varphi_ {\beta \alpha} (x) v)\quad \text{for}\quad x\in M, v \in \mathbb R^n\, , 
\end{equation*}
where $\text {Gl}(n, \mathbb R)$ is the general linear group of bijective linear self maps of $\mathbb R^n\, .$

As direct consequences, the transition maps satisfy:
\begin{equation*}
\begin{aligned}
\varphi_ {\alpha\alpha} (x) &= \text{id}_{\mathbb R^n}\quad \text{for}\quad x\in U_{\alpha};\\ 
 \varphi_ {\alpha \beta} (x)\varphi_ {\beta \alpha}(x) &= \text{id}_{\mathbb R^n}\quad \text{for}\quad x\in U_ {\alpha} \cap U_{\beta};\\
 \varphi_ {\alpha \gamma}(x)\varphi_ {\gamma \beta} (x)\varphi_ {\beta \alpha} (x) &= \text{id}_{\mathbb R^n}\quad \text{for}\quad x\in U_ {\alpha} \cap U_{\beta}\cap U_{\gamma}.
\end{aligned}
\end{equation*}

(cf. \cite {J}) A vector bundle can be reconstructed from its transition maps

\begin{equation*} E = \coprod _{\alpha} \quad U_{\alpha} \times \mathbb R^n\, / \, \sim\, ,
\end{equation*}
where $\coprod$ denotes disjoint union, and the equivalence relation $\sim$ is defined by
\begin{equation}
 (x, v) \sim (y, w) \  : \Longleftrightarrow\  x= y \ \text{and} \ w = \varphi _{\beta\alpha} (x) v\  (x \in U_{\alpha}, y \in U_{\beta}, v, w \in \mathbb R^n)\, .\label{2.1}
\end{equation}
 
\begin{definition}
Let $G$ be a subgroup of $\text {Gl}(n, \mathbb R)$, for example the orthogonal group $O(n)$ or special orthogonal group $SO(n)\, .$ By a vector bundle  has {\it the structure group $G$}, we mean there exists an atlas of bundle charts for which all 
transition maps have their values in $G\, .$
\end{definition}
 
\begin{definition}
Let $G$ be a Lie group.  A  {\it principal $G$-bundle} consists of a base $M$, the total space $P$ of the bundle, and a differentiable projection $\pi : P \to M\, ,$ where $P$ and $M$ are differentiable manifolds, with an action of $G$ on $P$ satisfying 
\smallskip

\item(i) $G$ acts freely on $P$ from the right: $(q,p)\in P \times G$ is mapped to $qp\in P\, ,$ and $qp \ne q$ for $q \ne e\, .$\smallskip

\noindent
The $G$ action then defines an equivalence relation on $P:  p \sim q : \Longleftrightarrow \exists g\in G$ such that $p=qg\, .$\smallskip

\item(ii) $M$ is the quotient of $P$ by this equivalence relation, and $\pi : P \to M$ maps $q\in M$ to its equivalence class. By $(\operatorname{i})$, each fiber $\pi^{-1} (x)$ can then be identified with $G$.
\smallskip

\item(iii)
$P$ is locally trivial in the following sense:

\noindent
For each $x\in M$, there exists a neighborhood $U$ of $x$ and a diffeomorphism 
\[\varphi:   \pi^{-1} (U) \to U \times G
\]
of the form $\varphi(p)=(\pi(p), \psi(g))$ which is $G$-equivariant, i.e. $\varphi (pg) = (\pi(p), \psi(p)g)$ for all $g \in G.$
\end{definition}

\begin{example} We have the following results.
\begin{enumerate}
\item[\rm(i)] The projection $S^n \to P^n(\mathbb R)$ of the $n$-sphere to the real projective space is a principal bundle with group $G = O(1) = Z_2$
\smallskip

\item[\rm(ii)] The Hopf map $S^{2n+1} \to P^n(\mathbb C)$ of the $2n+1$-sphere to the complex projective space is a principal bundle with group $G = U(1) = S^1$
\smallskip

\item[\rm(iii)] The Hopf map $S^{4n+1} \to P^n(\mathbb Q)$ of the $4n+1$-sphere to the quaternionic projective space is a principal bundle with group $G = Sp(1) = S^3$
\smallskip

\item[\rm(iv)] Hopf fibrations: $ S^1 \to S^1, S^3 \to S^2, S^7 \to S^4,$ and $ S^{15}
\to S^8\, $
\end{enumerate}
For $k=1, 2, 4, 8\, ,$   the
Hopf construction is defined
 by $$(z,w)\mapsto u(z, w) = (|z|^2 - |w|^2,
2 z \cdot \overline w): \ \ \mathbb{R}^k \times \mathbb{R}^k \to \mathbb{R}^{k+1}.
$$

In fact, Hopf fibrations are $p$-harmonic maps and  $p$-harmonic morphisms for every $p > 1$ (c.f., e.g., \cite {W8, CW2}).

\noindent
We recall a $C^2$ map $u: M \to N$ is said to be a \emph{p-harmonic
morphism} if for any p-harmonic function $f$ defined on an open
set $V$ of $N$, the composition $f \circ u$ is $p$-harmonic on
$u^{-1}(V)$.
\label{E: 2.4}
\end{example}

\begin{example} If $E \to M$ is a vector bundle with fiber $V$, the bundle of bases of $E, B(E) \to M$ is a principle bundle with group $\operatorname{Gl}(V)\, .$  
\end{example}

\subsection{Reversibility of principal and vector bundles}
\medskip

\noindent
$(\Longrightarrow)$ Given a principal $G$-bundle $P \to M$ and a vector space $V$ on which $G$ acts from the left, we construct the associated vector bundle $E \to M$ with fiber $V$ as follows:

\noindent
We have a free action of $G$ on $P \times V$ from the right:

\[
\begin{aligned}
P \times V \times G &\to P \times V\\
(p,v) \cdot g & = (p \cdot g, g^{-1} v)\, .
 \end{aligned}
\]
If we divide out this $G$-action, i.e. identify  $(p,v)$ and $(p,v) \cdot g $, the fibers of $(P \times V) / G \to P/G$ becomes vector spaces isomorphic to $V$, and 

\begin{equation*} E := P \times _{G} \, V\, := (P \times V)_{/G} \to  M 
\end{equation*}
is a vector bundle with fiber $G \times _G \, V\,  := (G \times  V)_{/G} = V$ and structure group $G\, .$ The transition functions for $P$ also give transition functions for $E$ via  the left action of $G$ on $V.$

\noindent
$(\Longleftarrow)$ 
Conversely, given a vector bundle $E$ with structure group $G$, we construct a principal $G$-bundle as
\begin{equation*}\coprod _{\alpha} \quad U_{\alpha} \times G\, / \, \sim
\end{equation*}
with
\[  (x_\alpha, g_\alpha) \sim (x_\beta, g_\beta) \quad  : \Longleftrightarrow\quad  x_\alpha = x_\beta \in U_{\alpha} \cap U_{\beta}\quad \text{and} \quad g_\beta = \varphi _{\beta\alpha} (x) g_\alpha\]  where $\{U_{\alpha}\}$ is a local trivialization of $E$
with transition functions $\varphi_{\beta \alpha}$ as in \eqref{2.1}.
\smallskip

\begin{example} We have the following assertions.

\begin{itemize}
\item[\rm(i)] The canonical line bundles (real, complex and  quaternionic) over the projective spaces $P^n(\mathbb R)\, ,P^n(\mathbb C)\, $ and of the $P^n(\mathbb Q)$ are the {\it associated bundles} of 
the principal bundles in Example \ref{E: 2.4} $(\operatorname{i})-(\operatorname{iii})$ via the canonical actions of $O(1), U(1)$ and $Sp(1)$ on $\mathbb R, \mathbb C$ and $\mathbb Q$ respectively.

\item[\rm(ii)] Let $E \to M$ be a bundle with fiber $F$ and structure group $G$ and $f: N \to M$ be a map between manifolds $N$ and $M$. Then the pull-back of $E \to M$ is a bundle $f^{-1} E \to M$ with fiber $F$, structure group $G$, and 
bundle charts $(\varphi \circ f, f^{-1}(U))$, where $\varphi (U)$ are bundle charts of $E\, .$ The pull-back $f^{-1} E \to M$ is called the {\it pull-back bundle}.
\end{itemize}
\end{example}

\section{ Normalized exponential Yang-Mills functionals and $e$-conservation laws}

Our basic set-up is the following: We consider a Riemannian manifold $M$, and a principal bundle $P$
with compact structure Lie group $G$ over $M$.
Let $Ad(P)$ be the adjoint bundle
\begin{equation}
Ad(P)=P\times _{Ad}\mathcal{G}\, , \label{3.1}
\end{equation}
where $\mathcal{G} $ \ is the Lie algebra of $G$. Every connection 
$\rho$
on $P$ induces a connection $\nabla$ on $Ad(P)$. A connection $\nabla$ on the vector bundle $Ad(P)$ is a rule that equips us to take derivatives of smooth cross sections of $Ad(P)$. We also have the
Riemannian connection $\nabla ^M$ on the tangent bundle $TM$, and
the induced connection on the tensor product $\Lambda ^2T^{*}M\otimes Ad(P)$, where  $\Lambda ^2T^{*}M$ is the second exterior power of the cotangent bundle $T^{*}M$. An
$Ad_G$ invariant inner product on $\mathcal{G}$ induces a fiber metric on
$Ad(P)$ and makes $Ad(P)$ and $\Lambda ^2T^{*}M\otimes Ad(P)$
into Riemannian vector bundles. Denote by $\Gamma \big (\Lambda ^2T^{*}M\otimes Ad(P)\big )$ the (infinite-dimensional) vector space of smooth sections of $\Lambda ^2T^{*}M\otimes Ad(P)\, .$ 
 For $k\ge 0$ set $$A^k\big (Ad(P)\big)=\Gamma (\Lambda
^kT^{*}M\otimes Ad(P))$$ be the space of smooth $k$-forms on $M$ with
values in the vector bundle $Ad(P)$. 
Although $\rho$ is not a section
of $A^1\big (Ad(P)\big)$ , via its induced connection $\nabla $, the associated curvature tensor $ R^\nabla$, given by $$R^{\nabla}_{X,Y} = [\nabla_X, \nabla_Y]- \nabla_{[X,Y]}\, ,$$ is
in $A^2(Ad(P))$. Let $\mathcal{C}$ be the space of smooth connections $\nabla \, $ on $Ad(P)\, ,$ and $dv$ be the volume element of $M\, .$ Recall
the \emph{Yang-Mills functional} is the mapping $\mathcal{YM} : \mathcal{C}\to \mathbb{R}^+\, $ given by
\begin{equation}
\mathcal{YM}(\nabla )=\int_M \frac 12||R ^\nabla ||^2\, dv\, ,\label{3.2}
\end{equation}
the \emph{$p$-Yang-Mills functional}, for $p \ge 2\, $  (resp. the \emph{$F$-Yang-Mills functional}) is the mapping $\mathcal {YM}_p : \mathcal{C}\to \mathbb{R}^+\, $ given by
\begin{equation}\aligned
\mathcal{YM}_p(\nabla )& =\int_M \frac 1p||R ^\nabla ||^p\, dv\,  \\
\big (resp.\quad \mathcal{YM}_F(\nabla )& =\int_M F(\frac 12||R ^\nabla ||^2)\, dv\, \big ),
\endaligned
\label{3.3}
\end{equation}
where the norm is defined in terms of the Riemannian metric on $M$ and a fixed $Ad_G$-invariant inner product on the Lie algebra $\mathcal{G}$ of $G\, .$ That is, at each point $x\in M\, ,$ its norm
\begin{equation}||R^{\nabla}||^2_x = \sum_{i<j}||R^{\nabla}_{e_i,e_j}||^2_x\, \label{3.4}
\end{equation}
where $\{e_1, \cdots, e_n\}$ is an orthonormal basis of $T_x(M)$ and the norm of $R^{\nabla}_{e_i,e_j}$ is the standard one on Hom$(Ad(P), Ad(P))$-namely, $$\langle S, U\rangle \equiv \, \text {trace}\, (S^T \circ U)\, .$$

A connection $\nabla$ on the adjoint bundle $Ad(P)$ is said to be a \emph{Yang-Mills connection} (resp. \emph{$p$-Yang-Mills connection}, $p \ge 2$, \emph{$F$-Yang-Mills connection}) and its associated curvature tensor $R^{\nabla}$ is said to be a \emph{Yang-Mills field} (resp. \emph{$p$-Yang-Mills field}, $p \ge 2$, \emph{$F$-Yang-Mills field}), if $\nabla$ is a critical point of $\mathcal{YM}$ (resp. $\mathcal{YM}_p\, $, ${\mathcal{YM}}_F$) with respect to any compactly supported variation in the space of smooth connections on $Ad(P)$ . We now introduce

\begin{definition} 
The \emph{normalized exponential Yang-Mills  energy
functional} is the mapping $\mathcal{YM}_e ^0 : \mathcal{C}\to \mathbb{R}^+\, $ given by
\begin{equation}
\mathcal{YM}_e ^0(\nabla )=\int_M \big (\exp (\frac 12||R ^\nabla ||^2) - 1\big )\, dv\, , \label{3.5}
\end{equation}
the \emph{exponential Yang-Mills energy
functional} is the mapping $\mathcal{YM}_e : \mathcal{C}\to \mathbb{R}^+\, $ given by
\begin{equation}
\mathcal{YM}_e(\nabla )=\int_M \exp (\frac 12||R ^\nabla ||^2)\, dv\, , \label{3.6}
\end{equation}
\end{definition}

\noindent
on $M\, ,$ 
the uniform norm $||R^{\nabla}||_\infty$ is given by

\begin{equation}||R^{\nabla}||^2_\infty = \sup_{x \in M}||R^{\nabla}||^2_x \, .\label{3.7}
\end{equation}

The normalized exponential Yang-Mills  energy
functional $\mathcal{YM}_e ^0$ has the following simple and useful advantage.

\begin{proposition}  
\begin{equation}\mathcal{YM}_e ^0(\nabla ) \ge 0\quad \text{and}\quad \mathcal{YM}_e ^0(\nabla )=0\quad \Longleftrightarrow\quad R ^\nabla \equiv 0\, .\label{3.8}\end{equation}
This is an analog of $p$-Yang-Mills  functional, for $p \ge 2\, ,$ 
\begin{equation}\mathcal{YM}_p(\nabla ) \ge 0\quad \text{and}\quad \mathcal{YM}_p(\nabla )=0\quad \Longleftrightarrow\quad R ^\nabla \equiv 0\, .\label{3.9}\end{equation}
\end{proposition}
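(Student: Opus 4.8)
The plan is to reduce both assertions to an elementary pointwise inequality together with the positive-definiteness of the fiber metric on $\Lambda^2 T^*M \otimes Ad(P)$. First I would record the scalar fact that the function $\psi(t) = e^t - 1$ satisfies $\psi(t) \ge 0$ for all $t \ge 0$, with equality if and only if $t = 0$; this is immediate since $\psi(0) = 0$ and $\psi'(t) = e^t > 0$ on $[0,\infty)$. Applying this with $t = \frac{1}{2}\|R^\nabla\|^2_x$, which is $\ge 0$ at every point, yields the pointwise bound
\begin{equation*}
\exp\!\big(\tfrac{1}{2}\|R^\nabla\|^2_x\big) - 1 \ge 0 \qquad \text{for all } x \in M,
\end{equation*}
with equality precisely when $\|R^\nabla\|^2_x = 0$.

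Next I would integrate this inequality over $M$ against the volume element $dv$, which gives $\mathcal{YM}_e^0(\nabla) \ge 0$ and settles the first claim. For the equivalence, the implication $R^\nabla \equiv 0 \Rightarrow \mathcal{YM}_e^0(\nabla) = 0$ is immediate, since then the integrand is identically zero. For the converse, I would use that $x \mapsto \exp(\frac{1}{2}\|R^\nabla\|^2_x) - 1$ is a continuous nonnegative function (the connection $\nabla$, hence $R^\nabla$, being smooth), so a vanishing integral forces it to vanish identically, i.e. $\|R^\nabla\|^2_x = 0$ for every $x \in M$. Finally, because the chosen $Ad_G$-invariant inner product on the compact Lie algebra $\mathcal{G}$ is positive definite, so is the induced fiber metric, and by \eqref{3.4} the relation $\|R^\nabla\|^2_x = \sum_{i<j}\|R^\nabla_{e_i,e_j}\|^2_x = 0$ forces $R^\nabla_{e_i,e_j} = 0$ for all $i<j$ at $x$, hence $R^\nabla_x = 0$; thus $R^\nabla \equiv 0$ on $M$.

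The $p$-Yang-Mills analog \eqref{3.9} follows by the identical scheme with $\psi(t) = e^t - 1$ replaced by $t \mapsto \frac{1}{p}\,t^{p/2}$ for $p \ge 2$, which is again nonnegative on $[0,\infty)$ and vanishes only at $t = 0$; applying it with $t = \|R^\nabla\|^2_x$ and invoking continuity and positive-definiteness exactly as above gives $\mathcal{YM}_p(\nabla) \ge 0$ together with the stated equivalence. I do not expect any real obstacle here: the only steps deserving a word of care are that a vanishing integral of a continuous nonnegative integrand forces pointwise vanishing (using smoothness of $\nabla$, and valid whether or not $M$ is compact), and that the norm in \eqref{3.4} genuinely detects $R^\nabla = 0$ (using positive-definiteness of the inner product on $\mathcal{G}$, which holds since $G$ is compact).
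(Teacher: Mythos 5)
Your argument is correct, and since the paper states this proposition without proof (treating it as immediate from the definitions), your elementary pointwise argument — $e^t-1\ge 0$ with equality only at $t=0$, continuity of the integrand, and positive-definiteness of the induced fiber metric via \eqref{3.4} — is exactly the justification the paper implicitly intends, for both \eqref{3.8} and its $p$-Yang-Mills analog \eqref{3.9}.
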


\begin{definition} 
The {\it stress-energy tensor} $S_{e,\mathcal{YM}^0 }$ associated with the normalized {\it exponential Yang-Mills energy
functional} $\mathcal{YM}_e ^0 $ and the  {\it stress-energy tensor} $S_{e,\mathcal{YM} }$ associated with the {\it exponential Yang-Mills energy
functional} $\mathcal{YM}_e $ are
defined respectively as follows:
\begin{equation}\begin{aligned}
S_{e,\mathcal{YM}^0 }(X,Y)&=\big (\exp (\frac{||R^{\nabla}||^2}2) -1 \big ) g(X,Y)-\exp (\frac{||R^{\nabla}||^2}2)\langle i_XR^{\nabla} ,i_YR^{\nabla} \rangle \, ,  \\
\end{aligned}\label{3.10}
\end{equation}
\begin{equation}\begin{aligned}
S_{e,\mathcal{YM} }(X,Y)&=\exp (\frac{||R^{\nabla}||^2}2)\big (g(X,Y)- \langle i_XR^{\nabla} ,i_YR^{\nabla} \rangle \big )   
\end{aligned}\label{3.11}
\end{equation}
where $\langle\quad , \quad \rangle$ is the induced inner product on $A^{1}\big (Ad(P)\big )\, ,$ and $i_XR^{\nabla}$ is the interior multiplication by the vector
field $X$ given by
\begin{equation}
(i_XR^{\nabla})(Y_1)=R^{\nabla}(X,Y_1)\, ,\label{3.12}
\end{equation}
for any vector fields $Y_{1}$ on $M$.
\end{definition}
\medskip

{We calculate the rate of change of the
\emph{normalized exponential Yang-Mills energy
functional} $\mathcal{YM}^0_{e,g} $  and \emph{exponential Yang-Mills energy
functional} $\mathcal{YM}_{e,g} $ when the metric $g$ on the domain or base manifold is changed. To this end, we consider a compactly supported smooth one-parameter variation of the metric $g\, ,$ i.e. a smooth family of metrics $g_s$ such that $g_0=g\, .$  Set $\delta g =\frac {\partial g_s}{
\partial s}_{\big {|}_{s =0}}\, .$ Then $\delta g$ is a smooth $2$-covariant symmetric tensor field on
$M$ with compact support. These give birth to their associated stress-energy tensors.}

\begin{lemma}   
With the same notations as above, we have
\begin{equation}\begin{aligned}
\frac{d}{ds}
\mathcal{YM}_{e,g_s }^0({\nabla} )_{\big {|}_{s =0}}  & = \frac 12\int_M\langle S_{e,\mathcal{YM}^0 },\delta g
\rangle dv_g\\
\end{aligned}\label{3.13}\end{equation}
\begin{equation}\begin{aligned}
\frac{d}{ds}
\mathcal{YM}_{e,g_s }({\nabla} )_{\big {|}_{s =0}} & = \frac 12\int_M\langle S_{e,\mathcal{YM} },\delta g
\rangle dv_g
\end{aligned}\label{3.14}\end{equation}
where $S_{e,\mathcal{YM}^0 }$ and $S_{e,\mathcal{YM}}$ are as in \eqref{3.8} and \eqref{3.9} respectively.
\end{lemma}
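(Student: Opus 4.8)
The plan is to differentiate under the integral sign — legitimate because the variation $g_s$ is smooth and compactly supported — and to reduce everything to two elementary facts about how the integrand responds to an infinitesimal change of the metric. \textbf{First}, I would record that the curvature $R^\nabla$, defined by $R^\nabla_{X,Y}=[\nabla_X,\nabla_Y]-\nabla_{[X,Y]}$ purely from the connection $\nabla$ on $Ad(P)$, does not depend on the metric $g$ on $M$; under the variation only the pointwise norm $\|R^\nabla\|^2$ and the volume element $dv$ change. \textbf{Second}, the variation of the volume element is the classical formula $\tfrac{d}{ds}dv_{g_s}\big|_{s=0}=\tfrac12\langle g,\delta g\rangle\,dv_g$, where $\langle\,\cdot\,,\,\cdot\,\rangle$ on the right is the induced inner product on symmetric $2$-tensors (so $\langle g,\delta g\rangle=\operatorname{tr}_g\delta g$).

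The one step that needs genuine care is the variation of $\|R^\nabla\|^2_{g_s}$. Writing $R^\nabla=\tfrac12\,R_{ij}\,dx^i\wedge dx^j$ in local coordinates with $R_{ij}=-R_{ji}$ sections of $Ad(P)$, so that $\|R^\nabla\|^2=\tfrac12\,g^{ik}g^{jl}\langle R_{ij},R_{kl}\rangle$, and using $\tfrac{d}{ds}g^{ik}_s\big|_{s=0}=-g^{ip}g^{kq}\delta g_{pq}$ together with the antisymmetry of $R_{ij}$, I get
\[
\frac{d}{ds}\|R^\nabla\|^2_{g_s}\Big|_{s=0}=-\,g^{pr}g^{qs}\,T_{pq}\,\delta g_{rs},\qquad T_{pq}:=g^{jl}\langle R_{pj},R_{ql}\rangle .
\]
Then I would check, from \eqref{3.12}, that $i_{\partial_p}R^\nabla=\sum_q R_{pq}\,dx^q$ and hence $T_{pq}=\langle i_{\partial_p}R^\nabla,i_{\partial_q}R^\nabla\rangle$, so that the displayed formula reads $\tfrac{d}{ds}\|R^\nabla\|^2_{g_s}\big|_{s=0}=-\big\langle\, T,\delta g\,\big\rangle$ with $T(X,Y)=\langle i_XR^\nabla,i_YR^\nabla\rangle$ the symmetric $2$-tensor appearing in \eqref{3.10}--\eqref{3.11}.

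With these ingredients the remainder is the chain rule. For $\mathcal{YM}^0_{e,g}$ the integrand is $f(\tfrac12\|R^\nabla\|^2)$ with $f(t)=e^t-1$, so $f'(t)=e^t=\exp(\tfrac12\|R^\nabla\|^2)$; differentiating $\int_M f(\tfrac12\|R^\nabla\|^2_{g_s})\,dv_{g_s}$ at $s=0$ and inserting the two variation formulas gives
\[
\frac{d}{ds}\mathcal{YM}^0_{e,g_s}(\nabla)\Big|_{s=0}
=\frac12\int_M\Big[\big(\exp(\tfrac{\|R^\nabla\|^2}2)-1\big)\langle g,\delta g\rangle-\exp(\tfrac{\|R^\nabla\|^2}2)\,\langle T,\delta g\rangle\Big]dv_g,
\]
which is exactly $\tfrac12\int_M\langle S_{e,\mathcal{YM}^0},\delta g\rangle\,dv_g$ by \eqref{3.10}: the first term is the volume contribution, the second the norm contribution. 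For $\mathcal{YM}_{e,g}$ one repeats the computation verbatim with $f(t)=e^t$ (again $f'=f$), the ``$-1$'' simply disappears, and one lands on $\tfrac12\int_M\langle S_{e,\mathcal{YM}},\delta g\rangle\,dv_g$ by \eqref{3.11}. The main obstacle is the index bookkeeping in the middle step — identifying $g^{jl}\langle R_{pj},R_{ql}\rangle$ with $\langle i_{\partial_p}R^\nabla,i_{\partial_q}R^\nabla\rangle$ and keeping straight the factors of $\tfrac12$ coming from the $\wedge$-normalization of $R^\nabla$ and from differentiating $t\mapsto\tfrac12\|R^\nabla\|^2$; once those are pinned down, the two identities fall out together.
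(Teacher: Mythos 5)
Your proposal is correct and follows essentially the same route as the paper: vary the pointwise norm, vary the volume element, and combine them by the chain rule against the definition \eqref{3.10} (resp. \eqref{3.11}) of the stress-energy tensor. The only difference is that where the paper quotes the variation formula \eqref{3.15} from Baird, you derive it directly by the coordinate computation with $\tfrac{d}{ds}g_s^{ik}\big|_{s=0}=-g^{ip}g^{kq}\delta g_{pq}$, and your identification $T_{pq}=g^{jl}\langle R_{pj},R_{ql}\rangle=\langle i_{\partial_p}R^\nabla,i_{\partial_q}R^\nabla\rangle$ reproduces exactly that cited formula, so the two arguments coincide in substance.
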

\begin{proof} From (\cite {Ba}), we obtain
\begin{equation}\frac{d||R^{\nabla} ||_{g_s }^2}{ds
}_{\big {|}_{s =0}}=-\sum _{i,j} \langle i_{e_i}R^{\nabla} ,  i_{e_j}R^{\nabla}\rangle  \delta g(e_i,e_j)\label{3.15}
\end{equation}
and
\begin{equation}\frac d{ds } \, dv_{g_s}\,
_{\big {|}_{s =0}}=\frac 12\langle g,\delta g \rangle dv_g\, .\label{3.16}
\end{equation}
Then by the chain rule, \eqref{3.15}, \eqref{3.16}, and \eqref{3.10},  we have
\begin{equation}\aligned \frac{d}{ds}
\mathcal{YM}_{e,g_s }^0(\nabla) _{\big {|}_{s =0}}  
& = \int_M \frac {d}{ds
} \bigg( \exp (\frac{||R^{\nabla} ||_{g_s }^2}{2}\big) - 1 \, dv_{g_s}\bigg ) _{\big {|}_{s =0}}\\
& = \int_M\exp (\frac{||R^{\nabla} ||^2}2)\frac d{ds
} \big(\frac{||R^{\nabla} ||_{g_s }^2}2\big) {\big |}_{s =0}\, dv_g\\
& \qquad +\int_M \big (\exp (\frac 12||R ^\nabla ||^2) - 1\big )\frac d{ds } \, dv_{g_s}\, _{\big {|}_{s =0}} \\
&=\frac 12\int_M \bigg (\big (\exp (\frac 12||R ^\nabla ||^2) - 1\big )\langle g,\delta g \rangle \\
& \qquad -\exp (\frac{||R^{\nabla} ||^2}2) \sum _{i,j} \langle i_{e_i}R^{\nabla} ,  i_{e_j}R^{\nabla}\rangle  \delta g(e_i,e_j)\bigg )\, dv_g \\
&=\frac 12\int_M\langle S_{e,\mathcal{YM}^0  },\delta g \rangle dv_g\, .
\endaligned\label{3.17}\end{equation}
Similarly, we can calculate $\frac{d}{ds} 
\mathcal{YM}_{e,g_s }(\nabla)_{\big {|}_{s =0}}$ and obtain the desired \eqref{3.14}.
\end{proof}

The {\it exterior
differential operator} $d^\nabla :A^1\big (Ad(P)\big )\rightarrow
A^2\big (Ad(P)\big )$ relative to the connection $\nabla$ is given by
\begin{equation}
(d^\nabla \sigma
)(X_1, X_2)=(\nabla _{X_1}\sigma
)(X_2) - (\nabla _{X_2}\sigma
)(X_1) \, .  \label{3.18}
\end{equation}

Relative to the Riemannian structures of $Ad(P)$ and $TM$, the
{\it codifferential operator} $\delta ^\nabla : A^2\big (Ad(P)\big )\rightarrow
A^1\big (Ad(P)\big )$ is characterized as the adjoint of $d$ via the
formula
\begin{equation}
\int_M\langle d^\nabla \sigma ,\rho \rangle dv_g=\int_M\langle
\sigma ,\delta ^\nabla \rho\rangle dv_g\, , \label{3.19}
\end{equation}
where $\sigma \in A^1\big (Ad(P)\big ),\rho \in A^2\big (Ad(P)\big )$ , one of which
has compact support and $dv_g$ is the volume element associated with the metric $g$ on $TM$. Then
\begin{equation}
(\delta ^\nabla \rho )(X_1)=-\sum_i(\nabla
_{e_i}\rho )(e_i,X_1)\, .\label{3.20}  
\end{equation}

\begin{definition}  
A connection $\nabla$ on the adjoint bundle $Ad(P)$ is said to be an {\it exponential Yang-Mills connection} and its associated curvature tensor $R^{\nabla}$ is said to be an \emph{exponential Yang-Mills field}, if $\nabla$ is a critical point of $\mathcal{YM}_e$ with respect to any compactly supported variation in the space of connections on $Ad(P)$ . 
\end{definition}  \medskip

\begin{lemma}[The first variation formula for
normalized exponential Yang-Mills functional $\mathcal{YM}_e ^0$ or $\mathcal{YM}_e$]
Let $A \in A^1\big (Ad(P)\big )$ and $\nabla ^t
=\nabla +t A $ be a family of connections on $Ad(P)$. Then
\begin{equation}\aligned \frac d{dt}\mathcal{YM}_e ^0(\nabla ^t)_{\big {|}_{s =0}} = \frac d{dt}\mathcal{YM}_e ^0(\nabla ^t)_{\big {|}_{s =0}} =\int_M\langle\delta ^\nabla
\big(\exp (\frac 12||R
^\nabla ||^2)R ^\nabla \big), A \rangle \, dv\, .
\endaligned\label{3.21}\end{equation}

Furthermore, The Euler-Lagrangian equation for $\mathcal{YM}_e ^0$ or $\mathcal{YM}_e$ is
\begin{equation}
\exp (\frac 12||R ^\nabla ||^2)\delta ^\nabla R
^\nabla -i_{\text{grad} \big( \exp (\frac 12||R^\nabla ||^2)\big)}R
^\nabla =0\, ,  \label{3.22}
\end{equation}
or \begin{equation}
\delta ^\nabla  \big(\exp (\frac 12||R ^\nabla
||^2)R^\nabla  \big) = 0\, .\label{3.23}
\end{equation}\label{L: 3.6}
\end{lemma}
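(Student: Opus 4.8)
The plan is to reduce the statement to the classical first–variation formula for curvature, combined with the adjointness \eqref{3.19} of $d^\nabla$ and $\delta^\nabla$ and a Leibniz rule for $\delta^\nabla$. First I would compute the infinitesimal change of the curvature along the affine family $\nabla^t=\nabla+tA$: expanding $R^{\nabla^t}_{X,Y}=[\nabla^t_X,\nabla^t_Y]-\nabla^t_{[X,Y]}$ one finds $R^{\nabla^t}=R^\nabla+t\,d^\nabla A+t^2\,A\wedge A$, hence $\frac{d}{dt}\big|_{t=0}R^{\nabla^t}=d^\nabla A$, and therefore, from the definition \eqref{3.4} of the pointwise norm together with bilinearity of the fibre inner product, $\frac{d}{dt}\big|_{t=0}\tfrac12\|R^{\nabla^t}\|^2=\langle R^\nabla,d^\nabla A\rangle$. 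Since the metric $g$, and hence the volume element $dv$, is held fixed throughout this variation (only the connection moves), the chain rule applied to $t\mapsto\exp\!\big(\tfrac12\|R^{\nabla^t}\|^2\big)$ gives
\[
\frac{d}{dt}\Big|_{t=0}\mathcal{YM}_e^0(\nabla^t)=\int_M\exp\!\big(\tfrac12\|R^\nabla\|^2\big)\,\langle R^\nabla,d^\nabla A\rangle\,dv .
\]
The additive constant $-1$ in \eqref{3.5} differentiates to zero; this is precisely why $\mathcal{YM}_e^0$ and $\mathcal{YM}_e$ have the same first variation, and repeating the computation verbatim from \eqref{3.6} produces the first equality asserted in \eqref{3.21}.

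Next I would transfer $d^\nabla$ off the test $1$-form $A$. Applying \eqref{3.19} with $\sigma=A\in A^1(Ad(P))$ and $\rho=\exp(\tfrac12\|R^\nabla\|^2)R^\nabla\in A^2(Ad(P))$ (one of the two having compact support, as $A$ does), we obtain
\[
\int_M\exp\!\big(\tfrac12\|R^\nabla\|^2\big)\langle R^\nabla,d^\nabla A\rangle\,dv=\int_M\big\langle\,\delta^\nabla\!\big(\exp(\tfrac12\|R^\nabla\|^2)R^\nabla\big),\,A\,\big\rangle\,dv ,
\]
which is the second equality in \eqref{3.21}. Since $A$ is an arbitrary compactly supported element of $A^1(Ad(P))$, the fundamental lemma of the calculus of variations shows that $\nabla$ is a critical point of $\mathcal{YM}_e^0$ (equivalently of $\mathcal{YM}_e$) if and only if $\delta^\nabla\!\big(\exp(\tfrac12\|R^\nabla\|^2)R^\nabla\big)=0$, i.e. \eqref{3.23}.

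Finally, to put the Euler–Lagrange equation in the form \eqref{3.22}, I would establish the Leibniz rule $\delta^\nabla(f\omega)=f\,\delta^\nabla\omega-i_{\operatorname{grad}f}\omega$ for a smooth function $f$ on $M$ and $\omega\in A^2(Ad(P))$. Using the local expression \eqref{3.20} at a point, with $\{e_i\}$ orthonormal and $\nabla_{e_i}e_j=0$ there, $\big(\delta^\nabla(f\omega)\big)(X_1)=-\sum_i\big(\nabla_{e_i}(f\omega)\big)(e_i,X_1)=-\sum_i(e_if)\,\omega(e_i,X_1)+f\,(\delta^\nabla\omega)(X_1)$; since $\operatorname{grad}f=\sum_i(e_if)\,e_i$ and interior multiplication is given by \eqref{3.12}, the first sum equals $\big(i_{\operatorname{grad}f}\omega\big)(X_1)$, which proves the rule. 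Taking $f=\exp(\tfrac12\|R^\nabla\|^2)$ and $\omega=R^\nabla$ turns \eqref{3.23} into \eqref{3.22}.

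The only genuinely delicate step is the first one: the identity $\frac{d}{dt}\big|_{t=0}R^{\nabla^t}=d^\nabla A$ requires keeping careful track of the connection induced on $\operatorname{End}(Ad(P))$, so that the cross terms of $[\nabla^t_X,\nabla^t_Y]$ reassemble exactly into $(\nabla_XA)(Y)-(\nabla_YA)(X)=(d^\nabla A)(X,Y)$ and no curvature-of-$\nabla$ terms are left over. Once that is in hand, everything else — the chain rule through $\exp$, the vanishing of the normalization constant, the adjointness \eqref{3.19}, and the bundle-valued Leibniz rule — is routine bookkeeping.
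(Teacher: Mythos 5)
Your proposal is correct and follows essentially the same route as the paper: expand $R^{\nabla^t}=R^\nabla+t\,d^\nabla A+t^2[A,A]$, differentiate through $\exp$ (the constant $-1$ dropping out), apply the adjointness \eqref{3.19}, and then unwind $\delta^\nabla$ of the product via \eqref{3.20} to pass between \eqref{3.23} and \eqref{3.22}. Your separately stated Leibniz rule $\delta^\nabla(f\omega)=f\,\delta^\nabla\omega-i_{\operatorname{grad}f}\omega$ is exactly the computation the paper performs inline in \eqref{3.29}, so there is no substantive difference.
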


\begin{proof} By assumption, the curvature of $\nabla^t$ is given by  
\begin{equation}
 R ^{\nabla^t} =  R ^\nabla + t (d^\nabla A) + t^2 [A, A]\, ,
\label{3.24}
\end{equation}
where $[A, A]\in A^2(Ad(P))$ is given by $[A, A]_{X,Y}=[A_X, A_Y]\, .$ Indeed, for any local vector fields $X,Y$ on $M$. with $[X,Y]=0\, ,$ we have via \eqref{3.18}
\begin{equation}
\begin{aligned}
 R ^{\nabla^t}_{X,Y} &=(\nabla _X +t A_X)(\nabla_Y + t A_Y)-(\nabla _Y + t A_Y)(\nabla _X + t A_X)\\
&= R ^\nabla_{X,Y} + t [\nabla _X , A_Y ] - t [\nabla _Y , A_X ] + t^2 [A_X, A_Y]\\
&=  R ^\nabla_{X,Y} + t \nabla _X (A_Y) - t \nabla _Y (A_X) + t^2 [A, A]_{X,Y}\\
&=  R ^\nabla_{X,Y} + t (d^\nabla A)_{X,Y} + t^2 [A, A]_{X,Y}\, .\\
\end{aligned}
\label{3.25}
\end{equation}
Thus,
\begin{equation}
\aligned \exp\, (\frac 12|| R^{\nabla^t} ||^2)=\exp\, (\frac 12||R ^\nabla
||^2 +t \langle R ^\nabla ,d^\nabla A \rangle + \varepsilon(t^2))\, ,
\endaligned
\label{3.26}
\end{equation}
where $\varepsilon (t^2) = o (t^2)\quad \text{as}\, t \to 0\, . $
Therefore,
\begin{equation}
\mathcal{YM}_e(\nabla ^t)=\int _M \exp\, (\frac 12||R ^\nabla
||^2+t\langle R ^\nabla ,d^\nabla A \rangle+\varepsilon(t^2) )\,  dv
\label{3.27}
\end{equation}
and via \eqref{3.19}, we have
\begin{equation}
\aligned \frac d{dt}\mathcal{YM}_e ^0(\nabla ^t)_{\big {|}_{s =0}} & = \frac d{dt}\mathcal{YM}_e (\nabla ^t)_{\big {|}_{s =0}}\\
& =\int_M\ exp\, (\frac
12||R ^\nabla ||^2)\langle R^\nabla ,d^\nabla A \rangle \, dv\\
&=\int_M\langle\delta ^\nabla \big(\exp\, (\frac 12||R
^\nabla ||^2)R ^\nabla \big), A \rangle \, dv\, .
\endaligned
\label{3.28}
\end{equation}
This derives the Euler-Lagrange equation for $\mathcal{YM}_e ^0$ or $\mathcal{YM}_e$ by \eqref{3.20} as follows
\begin{equation}
\aligned 0 &=\delta ^\nabla \big (\exp\, (\frac 12||R ^\nabla
||^2)R^\nabla \big) \\
&= - \sum_{i=1}^m \big (\nabla _{e_i}\exp\, (\frac 12||R ^\nabla ||^2)R
^\nabla\big )(e_i,\cdot ) \\
&=\exp\, (\frac 12||R ^\nabla ||^2)\delta ^\nabla R
^\nabla -i_{\text{grad}\big (\exp\, (\frac 12||R^\nabla ||^2)\big )}R
^\nabla\, .
\endaligned
\label{3.29}
\end{equation}
\end{proof}

\begin{corollary}
Every normalized exponential Yang-Mills
connection or every exponential Yang-Mills
connecton $\nabla $ satisfies \eqref{3.29}. 
\label{C: 3.7} 
\end{corollary}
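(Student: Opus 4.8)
The plan is to read the corollary straight off Lemma~\ref{L: 3.6}. Since the space $\mathcal{C}$ of connections on $Ad(P)$ is an affine space modelled on $A^1\big(Ad(P)\big)$, every compactly supported variation of a connection $\nabla$ through connections agrees to first order with an affine path $\nabla^t=\nabla+tA$ for some $A\in A^1\big(Ad(P)\big)$ of compact support, so such paths already suffice to detect criticality. Moreover, along any such path the domain metric $g$ (hence $dv$) is unchanged, so $\mathcal{YM}_e^0(\nabla^t)$ and $\mathcal{YM}_e(\nabla^t)$ differ by the $t$-independent quantity $-\int_M dv$ (on a noncompact $M$, by $-\int_{\mathrm{supp}(A)}dv$, which is finite and $t$-independent). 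Consequently their first variations coincide --- this is the content of the first variation formula \eqref{3.21} and its proof \eqref{3.28} --- so a normalized exponential Yang-Mills connection is exactly the same object as an exponential Yang-Mills connection, and it suffices to argue with $\mathcal{YM}_e$.

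Next I would unwind the definition of critical point. If $\nabla$ is an exponential Yang-Mills connection then $\frac{d}{dt}\mathcal{YM}_e(\nabla^t)\big|_{t=0}=0$ for every compactly supported $A\in A^1\big(Ad(P)\big)$, and by \eqref{3.21} (equivalently, the computation in \eqref{3.28}) this says
\[
\int_M \big\langle \delta^\nabla\big(\exp(\tfrac12\|R^\nabla\|^2)R^\nabla\big),\, A\big\rangle\, dv = 0
\]
for all such $A$. Applying the fundamental lemma of the calculus of variations --- a continuous section of $\Lambda^1T^*M\otimes Ad(P)$ whose $L^2$-pairing against every compactly supported section vanishes must itself be zero --- I obtain $\delta^\nabla\big(\exp(\tfrac12\|R^\nabla\|^2)R^\nabla\big)=0$ on $M$, which is \eqref{3.23}. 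Finally, expanding $\delta^\nabla$ by the formula \eqref{3.20} and applying the Leibniz rule to $\exp(\tfrac12\|R^\nabla\|^2)R^\nabla$ --- precisely the chain of identities recorded in \eqref{3.29} --- rewrites this as $\exp(\tfrac12\|R^\nabla\|^2)\delta^\nabla R^\nabla-i_{\mathrm{grad}(\exp(\frac12\|R^\nabla\|^2))}R^\nabla=0$, i.e. \eqref{3.22}. Hence $\nabla$ satisfies \eqref{3.29}, as claimed.

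There is no genuine obstacle here: the analytic work was done in Lemma~\ref{L: 3.6}, and this corollary is essentially a translation of ``critical point'' into ``Euler--Lagrange equation.'' The only step that warrants a word of care is the passage from vanishing first variation to vanishing of the Euler--Lagrange section, which is legitimate precisely because the test forms $A$ range over \emph{all} compactly supported sections of $\Lambda^1T^*M\otimes Ad(P)$ and the integrand is continuous; granting that, the equivalence \eqref{3.23}$\,\Leftrightarrow\,$\eqref{3.22} displayed in \eqref{3.29} is purely algebraic.
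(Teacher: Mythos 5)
Your proposal is correct and follows the same route the paper intends: Corollary~\ref{C: 3.7} is an immediate consequence of the first variation formula of Lemma~\ref{L: 3.6}, with vanishing of $\frac{d}{dt}\mathcal{YM}_e^0(\nabla^t)\big|_{t=0}=\frac{d}{dt}\mathcal{YM}_e(\nabla^t)\big|_{t=0}$ for all compactly supported $A$ forcing $\delta^\nabla\big(\exp(\tfrac12\|R^\nabla\|^2)R^\nabla\big)=0$ via the fundamental lemma of the calculus of variations, and then the algebraic expansion in \eqref{3.29} giving \eqref{3.22}. Your explicit remarks on why affine paths suffice and why the normalized and unnormalized functionals share the same first variation are sound refinements of what the paper leaves implicit.
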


Dong and Wei derive
\smallskip

\noindent {\bf Theorem B (\cite {DW})}  {\it 
${\rm(i)}$ The Euler-Lagrangian equation for $F$-Yang-Mills functional $\mathcal{YM}_F$ is
\begin{equation}
F^{\prime }(\frac 12||R ^\nabla ||^2)\delta ^\nabla R
^\nabla -i_{\text{grad} \big(F^{\prime }(\frac 12||R^\nabla ||^2)\big)}R
^\nabla =0  \label{3.30} 
\end{equation}
or \[
\delta ^\nabla  \big(F^{\prime }(\frac 12||R ^\nabla
||^2)R^\nabla  \big) = 0\, .\]

${\rm(ii)}$ The Euler-Lagrangian equation for $p$-Yang-Mills functional $\mathcal{YM}_p\, , p \ge 2$ is
\begin{equation}
\delta ^\nabla ( ||R
^\nabla ||^{p-2}R ^\nabla ) =0\label{3.31}
\end{equation}
 \[\qquad \text{or}\qquad ||R
^\nabla ||^{p-2}\delta ^\nabla R
^\nabla - i_ {\text{grad} (||R
^\nabla ||^{p-2})}R
^\nabla =0\, .
\]}
\eqref{3.30} is also due to C. Gherghe (\cite {G}).
\smallskip

\begin{corollary}  Let $||R ^\nabla || =$ constant. Then the following are equivalent:
\begin{equation}
\aligned
{\rm(i)} \quad&\operatorname{A}\,  \operatorname{curvature}\, \operatorname{tensor}\,  R ^\nabla \text{is}\, \text{a}\,
\text{normalized}\, \text{exponential}\, \text{Yang-Mills}\, \,
\text{field}\, .\\
{\rm(ii)} \quad &\operatorname{A}\,  \operatorname{curvature}\, \operatorname{tensor}\,  R ^\nabla \text{is}\, \text{a}\,
\text{Yang-Mills}\, 
\text{field}\, .\\
{\rm(iii)} \quad  & \operatorname{A}\,  \operatorname{curvature}\, \operatorname{tensor}\,  R ^\nabla \text{is}\, \text{a}\, \, p-\text{Yang-Mills}\, \text{field}, p \ge 2\,\, .\\
{\rm(iv)} \quad  & \operatorname{A}\,  \operatorname{curvature}\, \operatorname{tensor}\,  R ^\nabla \text{is}\, \text{an}\,
\text{exponential}\, \text{Yang-Mills}\, 
\text{field}\, .\\
{\rm(v)} \quad  & \operatorname{A}\,  \operatorname{curvature}\, \operatorname{tensor}\,  R ^\nabla \text{is}\, \text{an}\, \,
F\text{-Yang-Mills}\, 
\text{field}\, .
\endaligned\label{3.32}
\end{equation}
\label{C: 3.8} 
\end{corollary}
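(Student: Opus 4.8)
The plan is to show that, under the hypothesis $\|R^\nabla\|\equiv\text{const}$, each of the five Euler--Lagrange equations collapses to the single Yang--Mills equation $\delta^\nabla R^\nabla = 0$, so that all five notions of ``field'' coincide. First I would dispose of the trivial case: if $\|R^\nabla\|\equiv 0$ then $R^\nabla\equiv 0$, which manifestly satisfies each of \eqref{3.22}, \eqref{3.23}, \eqref{3.30}, \eqref{3.31}, so (i)--(v) hold simultaneously. Hence we may assume $\|R^\nabla\|\equiv c$ for some constant $c>0$, so that $\tfrac12\|R^\nabla\|^2\equiv\tfrac{c^2}{2}>0$ throughout $M$.

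The key point is that the scalar weights occurring in the various Euler--Lagrange equations are then \emph{constant functions} on $M$: $\exp(\tfrac12\|R^\nabla\|^2)\equiv\exp(\tfrac{c^2}{2})$, $F'(\tfrac12\|R^\nabla\|^2)\equiv F'(\tfrac{c^2}{2})$, and $\|R^\nabla\|^{p-2}\equiv c^{\,p-2}$; consequently their gradients vanish identically and the interior-multiplication terms $i_{\operatorname{grad}(\cdots)}R^\nabla$ drop out of \eqref{3.22}, \eqref{3.30} and \eqref{3.31}. Recalling that a connection is a (normalized exponential, exponential, $p$-, $F$-) Yang--Mills connection precisely when its curvature satisfies the corresponding Euler--Lagrange equation of Lemma \ref{L: 3.6}, Corollary \ref{C: 3.7}, and Theorem B, each of those equations therefore reduces to $\kappa\,\delta^\nabla R^\nabla = 0$ with $\kappa$ a positive constant: $\kappa=\exp(\tfrac{c^2}{2})$ in the (normalized) exponential case; $\kappa=c^{\,p-2}$ in the $p$-Yang--Mills case ($p\ge 2$), by Theorem B(ii); $\kappa=F'(\tfrac{c^2}{2})>0$ in the $F$-Yang--Mills case, by Theorem B(i) together with the strict monotonicity of $F$; and $\kappa=1$ in the Yang--Mills case, which is the special case $F(t)=t$. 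Since $\kappa>0$ in every instance, each of (i)--(v) is equivalent to $\delta^\nabla R^\nabla = 0$, and hence the five conditions are equivalent to one another.

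The argument is short once the Euler--Lagrange equations of Lemma \ref{L: 3.6} and Theorem B are in hand, and I do not foresee a genuine obstacle. The only point needing a word of care is the \emph{positivity} of the weight $\kappa$ in each case — in particular that $F'(\tfrac12 c^2)>0$, which is where one uses that $F$ is strictly increasing and that the evaluation point $\tfrac{c^2}{2}$ is away from the origin — since it is precisely this positivity, combined with the vanishing of all gradient terms when $\|R^\nabla\|$ is constant, that forces the a priori distinct critical-point conditions to collapse to the common equation $\delta^\nabla R^\nabla = 0$.
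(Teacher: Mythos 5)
Your proposal is correct and takes essentially the same route as the paper, whose proof of Corollary \ref{C: 3.8} is simply the observation that when $\|R^\nabla\|$ is constant the gradient terms in the Euler--Lagrange equations \eqref{3.29}--\eqref{3.31} vanish and each equation reduces to a positive constant multiple of $\delta^\nabla R^\nabla=0$; you spell this out, including the trivial case $R^\nabla\equiv 0$. (One small caveate on your ``point needing care'': strict monotonicity of a $C^2$ function $F$ only guarantees $F'\ge 0$, not $F'(\tfrac{c^2}{2})>0$ at the particular point $\tfrac{c^2}{2}$ --- $F'$ can vanish at isolated points away from the origin --- so the implication from the $F$-Yang--Mills equation back to $\delta^\nabla R^\nabla=0$ tacitly needs $F'>0$ there, an assumption the paper itself also makes implicitly.)
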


\begin{proof}
This follows at once from \eqref{3.29}-\eqref{3.31}.
\end{proof}

\begin{lemma} 
 Let $S_{e,\mathcal{YM}^0 }$ and $S_{e,\mathcal{YM} }$ be the
stress-energy tensors defined by \eqref{3.9} and \eqref{3.10} respectively, then for any vector field
$X$ on $M$, we have
\begin{equation}
\begin{aligned} (\operatorname{div} S_{e,\mathcal{YM}^0  })(X)&=(\operatorname{div} S_{e,\mathcal{YM}  })(X)\\
& = \exp (\frac{||R^\nabla||^2}2)\langle\delta ^\nabla R^\nabla ,i_X R^\nabla \rangle+\exp (\frac{||R^\nabla||^2}2)\langle i_Xd^\nabla R^\nabla ,R^{\nabla} \rangle
\\
&\qquad - \langle i_{\text{grad}(\exp(\frac{||R^\nabla ||^2}2))}R^\nabla
,i_X R^\nabla \rangle\, ,
\end{aligned}
\label{3.33}
\end{equation}
where $\text{grad} \, (\, \bullet\, )$ is the gradient vector field of $\, \bullet\, .$
\label{L: 3.9}
\end{lemma}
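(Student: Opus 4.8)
The plan is to compute the divergence of $S_{e,\mathcal{YM}^0}$ directly from its defining formula \eqref{3.10} by differentiating in an adapted orthonormal frame, and then to observe that the extra constant term $-g(X,Y)$ that distinguishes \eqref{3.10} from \eqref{3.11} is parallel and hence contributes nothing to the divergence, which immediately yields the first equality $(\operatorname{div} S_{e,\mathcal{YM}^0})(X) = (\operatorname{div} S_{e,\mathcal{YM}})(X)$. So it suffices to treat one of the two tensors; write $f = \exp(\tfrac{1}{2}\|R^\nabla\|^2)$ for brevity, so that $S_{e,\mathcal{YM}}(X,Y) = f\,g(X,Y) - f\,\langle i_XR^\nabla, i_YR^\nabla\rangle$.

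First I would fix $x \in M$, choose a local orthonormal frame $\{e_i\}$ that is normal at $x$ (so $\nabla^M_{e_i}e_j = 0$ at $x$), and use the standard formula $(\operatorname{div} S)(X) = \sum_i (\nabla_{e_i}S)(e_i, X)$. Applying $\nabla_{e_i}$ to each of the two pieces of $S_{e,\mathcal{YM}}$ and contracting: the term $f\,g(e_i,X)$ differentiates to $(e_i f)\,g(e_i,X)$, and summing over $i$ gives $\langle \operatorname{grad} f, X\rangle = \langle i_{\operatorname{grad} f}R^\nabla, i_XR^\nabla\rangle$ only after one rewrites things — more carefully, $\sum_i (e_if)g(e_i,X) = Xf$, and I would then want to recognize $Xf = f\cdot X(\tfrac12\|R^\nabla\|^2)$ and relate $X(\tfrac12\|R^\nabla\|^2)$ to $\langle i_Xd^\nabla R^\nabla, R^\nabla\rangle + \langle \delta^\nabla R^\nabla, i_XR^\nabla\rangle$ via the Weitzenböck-type identity for the Hodge Laplacian on bundle-valued $2$-forms (equivalently, expand $\nabla_{e_i}\langle i_{e_j}R^\nabla, i_{e_k}R^\nabla\rangle$ and use the second Bianchi identity $d^\nabla R^\nabla = 0$ together with the definitions \eqref{3.18}, \eqref{3.20} of $d^\nabla$ and $\delta^\nabla$). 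For the second piece, differentiating $-f\langle i_XR^\nabla, i_{e_i}R^\nabla\rangle$ and summing produces a $-(e_if)\langle i_XR^\nabla,i_{e_i}R^\nabla\rangle$ term, which assembles into $-\langle i_{\operatorname{grad} f}R^\nabla, i_XR^\nabla\rangle$, plus $-f$ times the contraction of $\nabla_{e_i}\langle i_XR^\nabla, i_{e_i}R^\nabla\rangle$, and the latter contraction is exactly where $\delta^\nabla R^\nabla$ and $d^\nabla R^\nabla$ enter.

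The key algebraic input throughout is the second Bianchi identity $d^\nabla R^\nabla = 0$ (so that the $i_Xd^\nabla R^\nabla$ term in \eqref{3.33} is, strictly, a placeholder that could be dropped, but is kept to mirror the general $F$-Yang-Mills computation of \cite{DW}), together with the commutation of $\nabla_{e_i}$ past interior products $i_{e_j}$ at a normal point, and the pointwise identity $\langle \delta^\nabla R^\nabla, i_XR^\nabla\rangle = -\sum_{i,j}(\nabla_{e_i}R^\nabla)(e_i,e_j)\cdot R^\nabla(X,e_j)$ obtained from \eqref{3.20}. Collecting the terms proportional to $f$ gives $f\langle\delta^\nabla R^\nabla, i_XR^\nabla\rangle + f\langle i_Xd^\nabla R^\nabla, R^\nabla\rangle$, and the terms carrying a derivative of $f$ give $-\langle i_{\operatorname{grad} f}R^\nabla, i_XR^\nabla\rangle$, which is precisely \eqref{3.33}.

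The main obstacle will be the bookkeeping in the middle step: correctly expanding $\sum_i\nabla_{e_i}\big(\langle i_XR^\nabla, i_{e_i}R^\nabla\rangle\big)$ into the $\delta^\nabla R^\nabla$ and $d^\nabla R^\nabla$ contributions while keeping track of signs and of the distinction between differentiating the frame-dependent slot $e_i$ (which vanishes at a normal point) versus the fixed field $X$. This is the standard "divergence of the stress-energy tensor" computation that appears for harmonic maps (Baird–Eells) and for $F$-Yang-Mills fields in \cite{DW}; here it is essentially the $F(t)=e^t$ specialization, so I would either cite \cite{DW} Theorem/Lemma on $\operatorname{div} S_{F,\mathcal{YM}}$ and substitute $F'(t) = e^t = f$, or reproduce the short frame computation above. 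The only genuinely new remark is the triviality that the normalization (the $-1$ inside the exponential, equivalently the extra $-g$ term) is $\nabla$-parallel and therefore divergence-free, giving the first equality in \eqref{3.33} for free.
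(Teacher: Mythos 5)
Your proposal is correct and takes essentially the route the paper intends: Lemma \ref{L: 3.9} is stated without proof, resting on the direct normal-frame divergence computation for the $F$-stress-energy tensor of bundle-valued forms in \cite{DW} specialized to $F'=\exp$, which is exactly your calculation, and your remark that $S_{e,\mathcal{YM}^0}$ and $S_{e,\mathcal{YM}}$ differ by the parallel tensor $-g$ does give the first equality for free. One small caveat: the second Bianchi identity is not actually needed to establish \eqref{3.33} — the term $\exp (\frac{||R^\nabla||^2}2)\langle i_Xd^\nabla R^\nabla ,R^{\nabla} \rangle$ emerges by itself as the combination of $X\big(\exp(\frac{||R^\nabla||^2}2)\big)=\exp(\frac{||R^\nabla||^2}2)\langle \nabla_XR^\nabla,R^\nabla\rangle$ with the cross term $-\exp(\frac{||R^\nabla||^2}2)\sum_{i,j}\langle(\nabla_{e_i}R^\nabla)(X,e_j),R^\nabla(e_i,e_j)\rangle$, and $d^\nabla R^\nabla=0$ is only invoked afterwards, in Theorem \ref{T: 3.11}, to conclude the $e$-conservation law.
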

\smallskip

\begin{definition}  
A curvature tensor $R^{\nabla} \in
A^2 \big (Ad(P) \big )$ is said to satisfy an
\emph {$e$-conservation law}  if $S_{e,\mathcal{YM}^0}$ is divergence free, i.e., 
\begin{equation}
\text{div} S_{e,\mathcal{YM}^0 } = \text{div} S_{e,\mathcal{YM} } = 0\, .
\label{3.34}
\end{equation}
\end{definition}
\smallskip

\begin{theorem}  Every normalized exponential Yang-Mills
field or every exponential Yang-Mills
field $R ^\nabla $ satisfies an $e$-conservation law.\label{T: 3.11}
\end{theorem}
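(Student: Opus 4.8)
The plan is to combine the divergence formula for the stress-energy tensor from Lemma~\ref{L: 3.9} with the exponential Yang-Mills equation from Lemma~\ref{L: 3.6} and the Bianchi identity. The key identity to exploit is \eqref{3.33}, which expresses $(\operatorname{div}S_{e,\mathcal{YM}^0})(X) = (\operatorname{div}S_{e,\mathcal{YM}})(X)$ as a sum of three terms: one involving $\langle \delta^\nabla R^\nabla, i_X R^\nabla\rangle$ weighted by $\exp(\tfrac12\|R^\nabla\|^2)$, one involving $\langle i_X d^\nabla R^\nabla, R^\nabla\rangle$ weighted by the same factor, and one involving $\langle i_{\operatorname{grad}(\exp(\frac12\|R^\nabla\|^2))}R^\nabla, i_X R^\nabla\rangle$. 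I would show each of these three contributions vanishes, so that $\operatorname{div}S_{e,\mathcal{YM}^0} = \operatorname{div}S_{e,\mathcal{YM}} = 0$, which is precisely the $e$-conservation law in the sense of \eqref{3.34}.

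First I would handle the middle term: since $R^\nabla$ is the curvature of the connection $\nabla$, the second Bianchi identity gives $d^\nabla R^\nabla = 0$, hence $i_X d^\nabla R^\nabla = 0$ and that term drops out identically, with no hypothesis needed. Next, I would combine the remaining two terms. Writing $f = \exp(\tfrac12\|R^\nabla\|^2)$, the surviving part of \eqref{3.33} is $f\,\langle \delta^\nabla R^\nabla, i_X R^\nabla\rangle - \langle i_{\operatorname{grad} f}R^\nabla, i_X R^\nabla\rangle = \langle f\,\delta^\nabla R^\nabla - i_{\operatorname{grad} f}R^\nabla, \, i_X R^\nabla\rangle$. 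By Lemma~\ref{L: 3.6}, equation \eqref{3.22} (equivalently \eqref{3.29}), a normalized exponential Yang-Mills field or exponential Yang-Mills field satisfies exactly $f\,\delta^\nabla R^\nabla - i_{\operatorname{grad} f}R^\nabla = 0$; indeed this is the Euler--Lagrange equation, and by Corollary~\ref{C: 3.7} every such field obeys it. Therefore the bracket is zero, the inner product vanishes, and $(\operatorname{div}S_{e,\mathcal{YM}^0})(X) = 0$ for every vector field $X$ on $M$.

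Since $X$ was arbitrary, this yields $\operatorname{div}S_{e,\mathcal{YM}^0} = \operatorname{div}S_{e,\mathcal{YM}} = 0$, establishing \eqref{3.34} and hence the $e$-conservation law. The argument is essentially a substitution: the only genuinely nontrivial input is Lemma~\ref{L: 3.9}, whose proof (not reproduced here) is the real computational work — one must differentiate the tensor \eqref{3.10}, use the Koszul-type formula for the divergence of a symmetric $2$-tensor, and carefully track the exponential weight and the interior-product terms. Given that lemma and the first variation formula, the present theorem is immediate. I anticipate no obstacle in the proof of the theorem itself beyond citing the right equations; the main subtlety worth a sentence is to note that $d^\nabla R^\nabla=0$ is the Bianchi identity (so it holds automatically) while the vanishing of $f\,\delta^\nabla R^\nabla - i_{\operatorname{grad} f}R^\nabla$ is exactly the field equation \eqref{3.22}, so the two ingredients that kill the three terms in \eqref{3.33} are of genuinely different natures.
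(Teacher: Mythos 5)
Your argument is correct and is essentially the paper's own proof: the paper likewise derives \eqref{3.34} by combining Lemma~\ref{L: 3.9} with the Bianchi identity $d^\nabla R^\nabla=0$ and the Euler--Lagrange equation via Corollary~\ref{C: 3.7}. Your grouping of the first and third terms of \eqref{3.33} into $\langle \exp(\tfrac12\|R^\nabla\|^2)\,\delta^\nabla R^\nabla - i_{\operatorname{grad}(\exp(\frac12\|R^\nabla\|^2))}R^\nabla,\, i_X R^\nabla\rangle$ just makes explicit the substitution the paper leaves implicit.
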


\begin{proof} It is known that $R ^\nabla $ satisfies the
Bianchi identity
\begin{equation}
d^\nabla R ^\nabla =0\, .  \label{3.35}
\end{equation}
Therefore, by Corollary \ref{C: 3.7}, Lemma \ref{L: 3.9} and \eqref{3.35}, we immediately
derive the desired \eqref{3.34}.
\end{proof}

\section{Comparison theorems in Riemannian geometry}

In this section, we will discuss comparison theorems with applications on
Cartan-Hadamard manifolds or more generally on complete manifolds with a pole. We
recall a  Cartan-Hadamard manifold is a complete simply-connected Riemannian
manifold of nonpositive sectional curvature.
A {\it pole} is a point $x_0\in M$ such that the exponential map from the tangent space to $M$ at $x_0$ into $M$ is a diffeomorphism. By the
{\it radial curvature} $K $ of a manifold with a pole, we mean the
restriction of the sectional curvature function to all the planes which contain the unit vector $\partial (x)$ in $T_{x}M$ tangent to the unique geodesic joining $%
x_{0}$ to $x$ and pointing away from $x_{0}.$ Let the tensor $g - dr \bigotimes dr = 0$  on the radial direction $\partial $, and is just the
metric tensor $g$ on the orthogonal complement $\partial ^{\bot}$.

\begin{theorem}(Hessian comparison theorem \cite{GW, DW, HLRW, W11})  Let $(M,g)$ be a complete
Riemannian manifold with a pole $x_0$. Denote by $K(r)$ the radial
curvature of $M$. Then
\begin{equation}
   -\alpha ^2\leq K(r)\leq -\beta ^2\quad \text{with}\quad \alpha >0, \,  \beta
>0\tag{i \cite {GW}}
\end{equation}
\begin{equation}
\Rightarrow\quad\beta \coth (\beta r)\big (g-dr\otimes dr\big )\leq Hess(r)\leq \alpha \coth
(\alpha r)\big (g-dr\otimes dr\big );\label{4.1}
\end{equation}
\begin{equation*}
K(r) = 0 
\tag{ii \cite {GW}}
\end{equation*}
\begin{equation}
\Rightarrow\quad\frac 1r\big (g-dr\otimes dr\big ) = Hess(r);\label{4.2}
\end{equation}
\begin{equation*}
-\frac A{(1+r^2)^{1+\epsilon}}\leq K(r)\leq \frac B{(1+r^2)^{1+\epsilon}}\quad \text{with}\quad \epsilon > 0, \, A \ge 0, \quad \text{and}\quad 0 \le B < 2\epsilon\, 
\tag{iii \cite {GW}, \cite [Lemma 4.1.(iii)]{DW}}
\end{equation*}
\begin{equation}
\Rightarrow\quad \frac{1-\frac B{2\epsilon}}{r}\big (g-dr\otimes dr\big )\leq Hess(r)\leq \frac{e^{\frac {A}{2\epsilon}}}{r}\big (g-dr\otimes dr\big );
\label{4.3}
\end{equation}
\begin{equation*}-\frac {A}{r^2}\leq K(r)\leq -\frac {A_1}{r^2}\quad  \text{with}\quad  0 \le A_1 \le A\, 
\tag{iv \cite {HLRW}, \cite [Theorem A] {W11} }
\end{equation*}
\begin{equation}
\Rightarrow\quad\frac{1+\sqrt{1+4A_1}}{2r}\bigg(g-dr\otimes dr\bigg) \le  \text{Hess} (r) \le \frac{1+\sqrt{1+4A}}{2r}\bigg(
g-dr\otimes dr\bigg);\label{4.4}
\end{equation}
\begin{equation*}
- \frac {A(A-1)}{r^2}\le K(r) \le - \frac {A_1(A_1-1)}{r^2}\quad \text{with}\quad  A \ge A_1 \ge 1\, 
\tag{v \cite [Corollary 3.1] {W11}}
\end{equation*}
\begin{equation}
\Rightarrow\quad  \frac{A_1}{r}\bigg(
g-dr\otimes dr\bigg) \le \text{Hess} r  \leq \frac{A}{r}\bigg(
g-dr\otimes dr\bigg);
\label{4.5}
\end{equation}
\begin{equation*}
\frac {B_1(1-B_1)}{r^2}\leq K(r)\le \frac {B(1-B)}{r^2}\, ,\text{with}\quad 0 \le B, \, B_1 \le 1\, 
\tag{vi \cite [Corollary 3.5] {W11}}
\end{equation*}
\begin{equation}
\Rightarrow\quad\frac {|B - \frac 12| + \frac 12}{r} \bigg(g-dr\otimes dr\bigg)    \le \text{Hess} r  \le \frac{1+\sqrt{1+4B_1(1-B_1)}}{2r}\bigg(
g-dr\otimes dr\bigg);\label{4.6}
\end{equation}
\begin{equation*}
\frac {B_1}{r^2} \le K(r) \le \frac {B}{r^2}\quad \text{with}\quad  0 \le B_1 \le B \le \frac 14\, 
\tag{vii \cite [Theorem 3.5]{W11}}
\end{equation*}
\begin{equation}
\Rightarrow\quad \frac{1+\sqrt{1-4B}}{2r} \bigg(
g-dr\otimes dr\bigg) \le  \text{Hess} r  \le \frac{1+\sqrt{1+4B_1}}{2r}\bigg(g-dr\otimes dr\bigg);\label{4.7}
\end{equation}
\begin{equation*}
 -Ar^{2q}\leq K(r)\leq -Br^{2q}\quad \text{with}\quad A\geq B>0\, ,
q>0\, 
\tag{viii \cite {GW}}\end{equation*}
\begin{equation}
\begin{aligned}
\Rightarrow\quad & B_0r^q\big (g-dr\otimes dr\big )\leq Hess(r)\leq (\sqrt{A}\coth \sqrt{A}
)r^q\big (g-dr\otimes dr\big )\, ,
\operatorname{for}\,  r\geq 1\, , 
\end{aligned} \label{4.8}
\end{equation}
where \begin{equation} B_0=\min
\{1,-\frac{q+1}2+\big (B+(\frac{q+1}2)^2\big )^{\frac{1}{2}}\}\, .
 \label{4.9}
\end{equation}
\label{T: 4.1}
\end{theorem}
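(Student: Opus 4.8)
The plan is to derive all eight implications from a single mechanism: a one–dimensional Sturm/Riccati comparison along the radial geodesics issuing from the pole $x_{0}$. Fix $x\neq x_{0}$, put $r=r(x)$, and let $\gamma:[0,r]\to M$ be the unique unit–speed minimizing geodesic from $x_{0}$ to $x$, with $\partial=\gamma'(r)$. First I would observe that $\operatorname{Hess}(r)(\partial,\,\cdot\,)=0$, so it suffices to estimate $\operatorname{Hess}(r)$ on the orthogonal complement $\partial^{\perp}$, where it coincides with the shape operator $S(t)$ of the geodesic sphere through $x$. Pulled back along $\gamma$, this self–adjoint operator field satisfies the matrix Riccati equation
\[
S' + S^{2} + \mathcal R = 0,\qquad \mathcal R(v)=R^{M}\!\big(v,\gamma'\big)\gamma',
\]
with the initial asymptotics $S(t)=t^{-1}\operatorname{id}+o(t^{-1})$ as $t\to 0^{+}$, and $\operatorname{Hess}(r)(J,J)(x)=\langle\nabla_{\gamma'}J,J\rangle(r)$ for the Jacobi field $J$ along $\gamma$ vanishing at $x_{0}$ with $J(r)=v$.

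Next I would pinch the radial curvature operator: in each case the hypothesis gives $G_{-}(t)\le K(t)\le G_{+}(t)$ with $G_{\pm}$ explicit, and since $\langle\mathcal R v,v\rangle=\operatorname{Sec}(v,\gamma')\,|v|^{2}$ is a radial sectional curvature, $G_{-}(t)\operatorname{id}\le\mathcal R\le G_{+}(t)\operatorname{id}$ as quadratic forms on $\partial^{\perp}$. Feeding this into the Riccati equation and applying the scalar Riccati (equivalently Rauch/Sturm) comparison, one obtains
\[
\frac{\psi_{+}'(r)}{\psi_{+}(r)}\,\big(g-dr\otimes dr\big)\ \le\ \operatorname{Hess}(r)\ \le\ \frac{\psi_{-}'(r)}{\psi_{-}(r)}\,\big(g-dr\otimes dr\big),
\]
where $\psi_{\pm}$ is a positive solution on $(0,r]$ of the model Jacobi equation $\psi_{\pm}''+G_{\pm}\psi_{\pm}=0$ with $\psi_{\pm}(0)=0$. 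The hypothesis that $x_{0}$ is a pole is exactly what guarantees $\gamma$ has no conjugate point, hence that such positive $\psi_{\pm}$ exist on all of $(0,\infty)$. Thus the whole theorem reduces to solving or estimating the model equation and computing $\psi_{\pm}'/\psi_{\pm}$ case by case.

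For the elementary cases this is immediate: in (i), $\psi_{+}=\beta^{-1}\sinh(\beta t)$ and $\psi_{-}=\alpha^{-1}\sinh(\alpha t)$ give $\beta\coth(\beta r)$ and $\alpha\coth(\alpha r)$; in (ii), $\psi_{\pm}=t$; and in (iv)–(vii) the model is of Euler type, $\psi''\pm c\,t^{-2}\psi=0$, whose solutions are powers $t^{s}$ with $s$ the larger root of the indicial equation $s(s-1)=\mp c$, so that $\psi'/\psi=s/r$. The stated exponents come from the factorizations $s^{2}-s-A(A-1)=(s-A)(s+A-1)$ and $s^{2}-s+B_{1}(1-B_{1})=(s-B_{1})\big(s-(1-B_{1})\big)$, and the coefficient $|B-\tfrac12|+\tfrac12=\max\{B,1-B\}$ in (vi) records which power dominates. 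In the subcases where the relevant curvature bound is positive — the upper–Hessian bounds in (vi) and (vii) — I would, following \cite{W11}, first replace the positive lower curvature bound $c/t^{2}$ by the weaker bound $-c/t^{2}$ (which only weakens the resulting estimate) and then solve the model; this deliberate step avoids analysing conjugate points in a positive–curvature model and is why $\sqrt{1+4B_{1}(1-B_{1})}$ and $\sqrt{1+4B_{1}}$ appear in \eqref{4.6}, \eqref{4.7} rather than $\sqrt{1-4B_{1}(1-B_{1})}$ and $\sqrt{1-4B_{1}}$.

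The genuinely technical points — and the main obstacle — are cases (iii) and (viii), where the model equation has no elementary closed form and one must argue from the ODE directly. For (iii), $\psi_{+}''+\frac{B}{(1+t^{2})^{1+\epsilon}}\psi_{+}=0$ forces $\psi_{+}$ concave with $\psi_{+}\le t$, whence $\psi_{+}'(r)\ge 1-\int_{0}^{\infty}\frac{B\,t\,dt}{(1+t^{2})^{1+\epsilon}}=1-\frac{B}{2\epsilon}$, and dividing by $\psi_{+}(r)\le r$ yields the lower bound $\tfrac{1-B/(2\epsilon)}{r}$; dually $\psi_{-}''-\frac{A}{(1+t^{2})^{1+\epsilon}}\psi_{-}=0$ makes $\psi_{-}$ convex with $\psi_{-}\ge t$, and the integral inequality $\psi_{-}'(r)\le 1+A\int_{0}^{r}\frac{t\,\psi_{-}'(t)}{(1+t^{2})^{1+\epsilon}}\,dt$ combined with Gronwall gives $\psi_{-}'(r)\le e^{A/(2\epsilon)}$, hence the upper bound $\tfrac{e^{A/(2\epsilon)}}{r}$ — the number $\tfrac1{2\epsilon}$ being precisely $\int_{0}^{\infty}\frac{t\,dt}{(1+t^{2})^{1+\epsilon}}$. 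For (viii), the substitution $t\mapsto t^{q+1}$ converts $\psi''=A\,r^{2q}\psi$ into a modified–Bessel–type equation whose growth rate $\psi'/\psi\sim\sqrt{A}\,r^{q}$ produces the factor $\sqrt{A}\coth\sqrt{A}$ after matching at $r=1$, while the lower bound $B_{0}r^{q}$ with $B_{0}$ as in \eqref{4.9} follows by inserting the subsolution $c\,r^{q}$ into the Riccati inequality and using the indicial data at $r=1$, where $-\tfrac{q+1}{2}+\big(B+(\tfrac{q+1}{2})^{2}\big)^{1/2}$ is the positive root of $s^{2}+(q+1)s-B=0$. Assembling the eight cases completes the proof.
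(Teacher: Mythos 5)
Your proposal is essentially correct, but it is a genuinely different route from what the paper does: the paper offers no argument at all for Theorem \ref{T: 4.1}, its ``proof'' being a pointer to the literature (cases (i), (ii), (viii) to \cite{GW}, (iii) to \cite{DW}, (iv) to \cite{HLRW, W11}, and (v)--(vii) to \cite{W11}), whereas you reconstruct all eight cases inside a single matrix-Riccati/Sturm comparison framework along radial geodesics. Your reconstruction is faithful to how those references actually argue: the Euler-type indicial analysis with roots $s(s-1)=\mp c$ reproduces the constants in \eqref{4.4}--\eqref{4.7}, including the correct observation that the upper Hessian bounds in \eqref{4.6}, \eqref{4.7} come from deliberately relaxing the positive lower curvature bound $c/t^{2}$ to $-c/t^{2}$ (hence the $\sqrt{1+4\,\cdot\,}$ rather than $\sqrt{1-4\,\cdot\,}$); the concavity/Gronwall estimates with $\int_{0}^{\infty}t(1+t^{2})^{-1-\epsilon}dt=\tfrac1{2\epsilon}$ are exactly the mechanism behind \eqref{4.3} in \cite{DW}; and the power-law sub/supersolution $c\,t^{q}$ in the Riccati inequality, matched at $r=1$, is the Greene--Wu argument for \eqref{4.8}--\eqref{4.9}. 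What your approach buys is a self-contained and unified derivation that explains where each constant comes from (and why some, e.g.\ in (vi)--(vii), are not sharp); what the paper's approach buys is brevity, since these comparison theorems are quoted results, not contributions of this article. Two details you gloss over and should spell out if this were written in full: for the Euler-type models the solutions $t^{s}$ do not satisfy the normalization $\psi(0)=0,\ \psi'(0)=1$, so the comparison must be run as a Riccati comparison using the asymptotics $S(t)=t^{-1}\operatorname{id}+o(t^{-1})$ (checking $s\le 1$ for lower bounds and $s\ge 1$ for upper bounds near $t=0$); and in (viii) the constants at the matching radius come from the constant-curvature comparisons on $[0,1]$ (namely $K\ge -A$ there gives $u(1)\le\sqrt{A}\coth\sqrt{A}$, while $K\le 0$ gives $u(1)\ge 1\ge B_{0}$, which is why $B_{0}$ is capped by $1$ in \eqref{4.9}), rather than from the Bessel-type growth rate alone.
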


\begin{proof}$(\operatorname{i})$, $(\operatorname{ii})$ and $(\operatorname{viii})$ are treated in section 2 of \cite {GW}, $(\operatorname{iii})$ is proved in \cite {DW}, $(\operatorname{iv})$ is derived in \cite{HLRW, W11}, $(\operatorname{v})$ - $(\operatorname{vii})$ are proved in \cite {W11}.
\end{proof}

We note there are many applications of this Theorem  (cf., e.g., \cite {WW}), $(\operatorname{iv})$ extends the asymptotic comparison theorem in (\cite {GW}, \cite {PRS}, p.39), and $(\operatorname{vii})$ generalizes (\cite {EF}, Lemma 1.2 (b)). 
\smallskip

Let $\flat $ denote the bundle isomorphism that identifies the vector field $X$ with the differential one-form $X^{\flat}$, and let $\nabla $ be the Riemannian connection of $M$. 
Then the covariant derivative $\nabla X^{\flat}$ of $X^{\flat}$ is a $(0,2)$-type tensor, given by 
\begin{equation} \nabla X^{\flat} (Y,Z) = \nabla _Y X^{\flat} Z = \langle \nabla _Y X, Z\rangle\, , \quad \forall\, X,Y \in \Gamma (M)\, . \label{4.10}
\end{equation}
If $X$ is conservative, then 

\begin{equation} X = \nabla f,\quad  X^{\flat} = df\quad \text{and}\quad  \nabla X^{\flat} = \text{Hess} (f)\, .\label{4.11}
\end{equation}
for some scalar potential $f$ $($cf. \cite {CW}, p. 1527$)$.
 A direct computation yields $($cf., e.g., \cite {DW}$)$\begin{equation} \text{div}
(i_X S_{e,\mathcal{YM}^0 }) = \langle S_{e,\mathcal{YM}^0 },\nabla X^{\flat}\rangle  +  (\text{div}
S_{e,\mathcal{YM}^0 }) (X)\, ,\quad \forall\, X\in \Gamma (M)\, . \label{4.12}
\end{equation} By Theorem \ref{T: 3.11}, every normalized exponential Yang-Mills
field $R ^\nabla $ satisfies an $e$-conservation law.
It follows from the divergence theorem that for every bounded domain $D$ in $M\, $ with $C^1$ boundary $\partial D\, ,$
\begin{equation}
\int_{\partial D}S_{e,\mathcal{YM}^0 }(X,\nu ) ds_g = \int_D\langle S_{e,\mathcal{YM}^0 },\nabla X^{\flat}\rangle dv_g\, , \label{4.13}
\end{equation}
where $\nu $ is unit outward normal vector field along $\partial
D$ with $(n-1)$-dimensional volume element $ds_g$. When we choose scalar potential $f (x) = \frac 12 r^2 (x)$, 
\eqref{4.11} becomes 
\begin{equation} X = r\nabla r,\quad  X^{\flat} = r dr\quad \text{and}\quad  \nabla X^{\flat} = \text{Hess} (\frac 12 r^2) = dr\otimes dr + r\text{Hess} (r)\, .\label{4.14}
\end{equation}
The conservative vector field $X$ and $e$-conservation law will illuminate that the curvature of the base manifold $M$ via Hessian Comparison Theorems \ref{T: 4.1} influences the behavior of the stress energy tensor $S_{e,\mathcal{YM}^0 }$ and the behavior of the underlying criticality - curvature field $R ^\nabla \in A^2(Ad(P))$ with the help from the following concept \eqref{4.15} and estimate \eqref{4.20}. 

Analogous to $F$-degree, we introduce 

\begin{definition} For a given curvature field $R^\nabla$, 
the \emph{$e$-degree} $d_e$ is the quantity, given by \begin{equation}
d_e = \sup_{x \in M} \frac {\exp\big (\frac{||R^\nabla||^2}2 (x)\big )}{\exp\big (\frac{||R^\nabla||^2}2(x)\big ) - 1}\, .\label{4.15}
\end{equation}
\label{D: 4.2}
\end{definition} 

The $e$-degree $d_e$ will play a role in connecting two separated parts of the normalized stress-energy tensor $S_{e,\mathcal{YM}^0 }$. Since $\frac {e^t}{e^t - 1} $ is a decreasing function, with $1 \le \frac {e^t}{e^t - 1} \le \infty$, we have

\begin{proposition} Suppose  \begin{equation}  \frac{||R^\nabla||^2}2(x) \le c\ \  \forall\ \  x \in M\, ,\label{4.16}
\end{equation} where $c > 0$ is a constant.  Then

\begin{equation} d_e \ge \frac {e^c}{e^c - 1}\, .
\label{4.17}
\end{equation}\label{P: 4.3}
\end{proposition}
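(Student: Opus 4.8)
The plan is to reduce the statement to the monotonicity of the single–variable function $\phi(t) = \frac{e^t}{e^t - 1}$ on $(0,\infty)$, which was already recorded just before the statement. First I would verify this monotonicity explicitly by rewriting $\phi(t) = 1 + \frac{1}{e^t - 1}$; since $t \mapsto e^t - 1$ is strictly increasing and positive on $(0,\infty)$, the term $\frac{1}{e^t-1}$ is strictly decreasing there, hence so is $\phi$, with $\lim_{t\to 0^+}\phi(t) = +\infty$ and $\lim_{t\to\infty}\phi(t) = 1$. This is exactly the ``decreasing function, with $1 \le \frac{e^t}{e^t-1}\le\infty$'' remark preceding \eqref{4.17}.

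Next, abbreviate $t(x) = \frac{||R^\nabla||^2}{2}(x)$, so that by definition \eqref{4.15} we have $d_e = \sup_{x\in M}\phi\big(t(x)\big)$. The hypothesis \eqref{4.16} says precisely that $0 \le t(x) \le c$ for every $x\in M$. Applying the monotonicity of $\phi$ pointwise gives $\phi\big(t(x)\big) \ge \phi(c) = \frac{e^c}{e^c-1}$ for each $x \in M$. Taking the supremum over $x\in M$ then yields $d_e \ge \frac{e^c}{e^c-1}$, which is the desired \eqref{4.17}.

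The argument is thus essentially immediate once the monotonicity is in hand; the only point deserving a line of care is the behavior at points where $R^\nabla$ vanishes. There $t(x) = 0$ and $\phi\big(t(x)\big)$ must be interpreted as $+\infty$ (consistently with the bound $1 \le \phi \le \infty$ quoted above), in which case $d_e = +\infty$ and the asserted inequality holds trivially; if instead $t(x) > 0$ for all $x$, every quantity in sight is finite and the displayed chain of inequalities is literal. Either way the conclusion follows, so I expect no genuine obstacle beyond this bookkeeping at the boundary value $t=0$.
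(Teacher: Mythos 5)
Your argument is correct and is essentially the paper's own: the proposition is stated there as an immediate consequence of the remark that $\frac{e^t}{e^t-1}$ is decreasing with $1 \le \frac{e^t}{e^t-1} \le \infty$, which is exactly the pointwise monotonicity step you carry out before taking the supremum. Your extra remark about the degenerate value $t(x)=0$ (where $d_e=+\infty$ and \eqref{4.17} holds trivially) is harmless bookkeeping that the paper leaves implicit.
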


\begin{lemma} Let $M$ be a complete $n$-manifold with a pole $x_0$.
Assume that there exist two positive functions $h_1(r)$ and
$h_2(r)$ such that
\begin{equation}
h_1(r)(g-dr\otimes dr)\leq Hess(r)\leq h_2(r)(g-dr\otimes dr)
\label{4.18}
\end{equation}
on $M\backslash \{x_0\}$. If $h_2(r)$ satisfies
\begin{equation}
rh_2(r)\geq 1\, ,  \label{4.19}
\end{equation}
and $||R^\nabla|| > 0$ on $M\, ,$
then
\begin{equation}
\langle S_{e,\mathcal{YM}^0 },\nabla X^\flat \rangle\,\geq
\,\big(1+(n-1)rh_1(r)-2 d_e ||R^\nabla||_{\infty}^2rh_2(r)\big)\big (\exp (\frac{||R^\nabla||^2}2) - 1\big )\, ,  \label{4.20}
\end{equation}
where $X=r \nabla  r$.\label{L: 4.4}
\end{lemma}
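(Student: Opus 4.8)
\textbf{Proof proposal for Lemma \ref{L: 4.4}.}

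The plan is to estimate $\langle S_{e,\mathcal{YM}^0 },\nabla X^\flat\rangle$ from below by decomposing the stress-energy tensor \eqref{3.10} into its two constituent pieces, feeding in the Hessian bounds \eqref{4.18} through the identity \eqref{4.14}, and then using the $e$-degree $d_e$ from \eqref{4.15} to convert the coefficient of the ``bad'' exponential term into a coefficient of the normalized quantity $\exp(\tfrac{\|R^\nabla\|^2}{2})-1$. First I would write, with $X=r\nabla r$ so that $\nabla X^\flat = dr\otimes dr + r\,\mathrm{Hess}(r)$ by \eqref{4.14},
\begin{equation*}
\langle S_{e,\mathcal{YM}^0 },\nabla X^\flat\rangle
= \big(\exp(\tfrac{\|R^\nabla\|^2}{2})-1\big)\langle g,\nabla X^\flat\rangle
- \exp(\tfrac{\|R^\nabla\|^2}{2})\sum_{i,j}\langle i_{e_i}R^\nabla, i_{e_j}R^\nabla\rangle\,\nabla X^\flat(e_i,e_j).
\end{equation*}
For the first summand I would compute $\langle g,\nabla X^\flat\rangle = \mathrm{tr}(\nabla X^\flat) = 1 + r\,\Delta r$ and then bound $r\,\mathrm{Hess}(r)\ge (n-1)rh_1(r)$ in the trace using the lower Hessian bound in \eqref{4.18} (the radial direction contributes $0$, the $(n-1)$ orthogonal directions contribute at least $(n-1)rh_1(r)$, matching the treatment of $g-dr\otimes dr$). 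This yields $\langle g,\nabla X^\flat\rangle \ge 1 + (n-1)rh_1(r)$, and since $\exp(\tfrac{\|R^\nabla\|^2}{2})-1\ge 0$ this gives the first two terms in \eqref{4.20}.

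For the second summand the key point is that $\sum_{i,j}\langle i_{e_i}R^\nabla,i_{e_j}R^\nabla\rangle\,\nabla X^\flat(e_i,e_j)$ is a nonnegative quantity (it is $\langle\cdot,\cdot\rangle$ paired against the positive-semidefinite tensor built from $\nabla X^\flat$ once we use $rh_2(r)\ge 1$ from \eqref{4.19} to dominate the $dr\otimes dr$ piece), so I would bound it above. Using $\nabla X^\flat = dr\otimes dr + r\,\mathrm{Hess}(r)\le dr\otimes dr + rh_2(r)(g-dr\otimes dr)\le rh_2(r)\,g$ — here \eqref{4.19} is exactly what makes $rh_2(r)\,g$ dominate $dr\otimes dr + rh_2(r)(g-dr\otimes dr)$ — we get
\begin{equation*}
\sum_{i,j}\langle i_{e_i}R^\nabla,i_{e_j}R^\nabla\rangle\,\nabla X^\flat(e_i,e_j)
\le rh_2(r)\sum_i\|i_{e_i}R^\nabla\|^2 = 2\,rh_2(r)\,\|R^\nabla\|^2,
\end{equation*}
the last equality because $\sum_i\|i_{e_i}R^\nabla\|^2 = \sum_i\sum_j\|R^\nabla_{e_i,e_j}\|^2 = 2\|R^\nabla\|^2$ by \eqref{3.4}. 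Then $\|R^\nabla\|^2\le\|R^\nabla\|_\infty^2$ gives an upper bound $2\,rh_2(r)\,\|R^\nabla\|_\infty^2$ for the whole sum.

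It remains to absorb the factor $\exp(\tfrac{\|R^\nabla\|^2}{2})$ multiplying this term: from \eqref{4.15}, at every point $\exp(\tfrac{\|R^\nabla\|^2}{2}) \le d_e\big(\exp(\tfrac{\|R^\nabla\|^2}{2})-1\big)$ — this is where the hypothesis $\|R^\nabla\|>0$ on $M$ is used, so that the denominator in \eqref{4.15} is positive and $d_e$ is a genuine finite (or at worst the supremum is taken over a well-defined set). Combining,
\begin{equation*}
-\exp(\tfrac{\|R^\nabla\|^2}{2})\sum_{i,j}\langle i_{e_i}R^\nabla,i_{e_j}R^\nabla\rangle\,\nabla X^\flat(e_i,e_j)
\ge -2\,d_e\,\|R^\nabla\|_\infty^2\,rh_2(r)\big(\exp(\tfrac{\|R^\nabla\|^2}{2})-1\big),
\end{equation*}
and adding the first-summand estimate produces exactly \eqref{4.20}. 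The main obstacle I anticipate is keeping the signs and the direction of the tensor inequalities straight when passing from $\mathrm{Hess}(r)$-bounds on $g-dr\otimes dr$ to bounds on the full tensor $\nabla X^\flat$: one must check that $i_{e_i}R^\nabla$ paired diagonally against $dr\otimes dr$ is nonnegative and that dropping it (via $rh_2(r)\ge 1$) genuinely only weakens the lower bound we want, rather than the reverse; the rest is the bookkeeping of \eqref{3.4} and the monotonicity of $e^t/(e^t-1)$ already recorded before the statement.
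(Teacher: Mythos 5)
Your proposal is correct and follows essentially the same route as the paper: the same decomposition of $\langle S_{e,\mathcal{YM}^0},\nabla X^\flat\rangle$ via \eqref{4.14}, the lower Hessian bound for the trace term, the identity $\sum_i\|i_{e_i}R^\nabla\|^2=2\|R^\nabla\|^2$, and the pointwise bound $\exp(\tfrac{\|R^\nabla\|^2}{2})\le d_e\big(\exp(\tfrac{\|R^\nabla\|^2}{2})-1\big)$ justified by $\|R^\nabla\|>0$. The only (harmless) cosmetic difference is that you absorb the radial contribution through the single tensor inequality $\nabla X^\flat\le rh_2(r)\,g$, whereas the paper isolates the term $(rh_2(r)-1)\langle i_{\partial/\partial r}R^\nabla,i_{\partial/\partial r}R^\nabla\rangle\ge 0$ and discards it — both being exactly the use of \eqref{4.19}.
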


\begin{proof}Choose an orthonormal frame $\{e_i,\frac
\partial
{\partial r}\}_{i=1,...,n-1}$ around $x\in M\backslash \{x_0\}$.
Take $ X=r\nabla r$. Then
\begin{equation}
\nabla _{\frac \partial {\partial r}}X=\frac \partial {\partial r}\, ,
\label{4.21}
\end{equation}
\begin{equation}
\nabla _{e_i}X=r\nabla _{e_i}\frac \partial {\partial
r}=rHess(r)(e_i,e_j)e_j\, . \label{4.22}
\end{equation}
Using \eqref{3.10}, \eqref{4.14}, $\big ($or \eqref{4.21}, \eqref{4.22}$\big )$, we have
\begin{equation}
\begin{aligned} \langle S_{e,\mathcal{YM}^0 },\nabla X^\flat \rangle&=\big (\exp (\frac{||R^\nabla||^2}2) - 1\big )(1+%
\sum_{i=1}^{n-1}rHess(r)(e_i,e_i)) \\
&\qquad -\sum_{i,j=1}^{n-1}\exp (\frac{||R^\nabla||^2}2)\langle i_{e_i} R^{\nabla} ,i_{e_j} R^{\nabla} \rangle r Hess(r)(e_i,e_j) \\
&\qquad -\exp (\frac{||R^\nabla\|^2}2)
\langle i_{\frac \partial {\partial r}} R^{\nabla} ,i_{\frac \partial {\partial r}} R^{\nabla} \rangle\, .
\end{aligned}
\label{4.23}
\end{equation}
By \eqref{4.18} and \eqref{4.15}, 
\eqref{4.23} implies that
\begin{equation}
\begin{aligned} & \langle S_{e,\mathcal{YM}^0 },\nabla X^\flat \rangle\\
&\geq \big (\exp (\frac{||R^\nabla||^2}2) - 1\big )\big(1+(n-1)rh_1(r)\big)
\\
&\qquad -\big (\exp (\frac{||R^\nabla||^2}2) - 1\big )\sum_{i=1}^{n-1}\langle i_{e_i} R^{\nabla} ,i_{e_i} R^{\nabla} \rangle r h_2(r) \frac{\exp (\frac{||R^\nabla||^2}2) }{\big (\exp (\frac{||R^\nabla||^2}2) - 1\big )}\\
&\qquad -\big (\exp (\frac{||R^\nabla||^2}2) - 1\big )\langle i_{\frac \partial {\partial r}} R^{\nabla} ,i_{\frac \partial {\partial r}} R^{\nabla} \rangle \frac{\exp (\frac{||R^\nabla||^2}2) }{\big (\exp (\frac{||R^\nabla||^2}2) - 1\big )}\\
&\geq \big (\exp (\frac{||R^\nabla||^2}2) - 1\big )\big(1+(n-1)rh_1(r)-2||R^\nabla||^2rh_2(r) \frac{\exp (\frac{||R^\nabla||^2}2) }{\big (\exp (\frac{||R^\nabla||^2}2) - 1\big )}\big) \\
&\qquad +\big (\exp (\frac{||R^\nabla||^2}2) - 1\big )(rh_2(r)-1)\langle i_{\frac\partial{\partial r}} R^{\nabla}, i_{\frac{\partial}{\partial r}}R^{\nabla} \rangle\frac{\exp (\frac{||R^\nabla||^2}2) }{\big (\exp (\frac{||R^\nabla||^2}2) - 1\big )}\\
&\geq \big(1+(n-1)rh_1(r)-2||R^\nabla||^2rh_2(r) d_e\big) \big (\exp (\frac{||R^\nabla||^2}2) - 1\big )\, .\end{aligned} \label{4.24}
\end{equation}
The last two steps follow from \eqref{4.19} and the fact that
\begin{equation}
\aligned & \sum_{i=1}^{n-1}\langle i_{e_i} R^{\nabla} ,i_{e_i} R^{\nabla} \rangle+\langle i_{\frac \partial {\partial r}} R^{\nabla} ,i_{\frac \partial {\partial r}} R^{\nabla} \rangle \\
 &\quad = \sum_{1\le j_1\le n}\sum_{i=1}^{n}\langle R^\nabla (e_i,e_{j_1}), R^\nabla (e_i,e_{j_1})\rangle
\\
&\quad = 2 ||R^\nabla||^2\, ,
\endaligned\label{4.25}
\end{equation}
where $e_n = \frac
\partial{\partial r}\, .$
Now the Lemma follows immediately from \eqref{4.24} and \eqref{3.7}.
\end{proof}

\section{Monotonicity formulae}

In this section, we will establish monotonicity formulae on
complete manifolds with a pole.\smallskip

\begin{theorem}[Monotonicity formulae]\label{t5.1} Let $(M,g)$ be an $n-$dimensional complete
Riemannian manifold with a pole $ x_0$, $Ad(P)$ be the adjoint bundle and the curvature tensor $ R^\nabla \in A^2\big (Ad(P)\big )$ be an exponential Yang-Mills
field.
Assume that the radial curvature $K(r)$ of $M$ and the curvature tensor $ R^\nabla$ satisfy one of the
following seven conditions:\smallskip

\begin{equation}
\aligned
{\rm(i)}&\quad -\alpha ^2\leq K(r)\leq -\beta ^2\, \text{with}\, \alpha >0, \beta
>0\, \text{and}\,  (n-1)\beta - 2d_e\alpha \|R^\nabla \|^2 _\infty \geq 0;\\
{\rm(ii)}&\quad K(r) = 0\, \text{with}\,    n-2d_e\|R^\nabla \|^2 _\infty>0;\\
{\rm(iii)}&\quad -\frac A{(1+r^2)^{1+\epsilon}}\leq K(r)\leq \frac B{(1+r^2)^{1+\epsilon}} \text{with}\, \epsilon > 0\, , A \ge 0\, , 0 < B < 2\epsilon\, , \text{and}\\
&\qquad n - (n-1)\frac B{2\epsilon} -2d_e e^{\frac {A}{2\epsilon}}\|R^\nabla \|^2 _\infty > 0;\\
{\rm(iv)}&\quad-\frac {A}{r^2}\leq K(r)\leq -\frac {A_1}{r^2}\,  \text{with}\quad  0 \le A_1 \le A\, ,\text{and}\\
&\qquad 1 + (n-1)\frac{1 + \sqrt {1+4A_1}}{2} - d_e (1 + \sqrt {1+4A})\|R^\nabla \|^2 _\infty > 0;\\
{\rm(v)}&\quad- \frac {A(A-1)}{r^2}\le K(r) \le - \frac {A_1(A_1-1)}{r^2}\, \text{and}\,   A \ge A_1 \ge 1\, , \text{and}\\ 
&\qquad 1+(n-1)A_1-2d_eA\|R^\nabla \|^2 _\infty > 0;\\
{\rm(vi)}&\quad \frac {B_1(1-B_1)}{r^2}\leq K(r)\le \frac {B(1-B)}{r^2}\, ,\text{with}\quad 0 \le B, \, B_1 \le 1\, ,  \text{and} \\
&\qquad 1 + (n-1)(|B-\frac {1}{2}|+ \frac {1}{2}) -d_e\big (1 + \sqrt {1+4B_1(1-B_1)}\big )\|R^\nabla \|^2 _\infty > 0;\\
{\rm(vii)}&\quad \frac {B_1}{r^2} \le K(r) \le \frac {B}{r^2} \text{with}\quad  0 \le B_1 \le B \le \frac 14\, , \text{and}\\ 
 &\qquad 1+ (n-1)\frac{1 + \sqrt {1-4B}}{2} -d_e(1 + \sqrt {1+4B_1} )\|R^\nabla \|^2 _\infty   > 0. 
\endaligned
\label{5.1}
\end{equation} 

Then 

\begin{equation}
\frac 1{\rho _1^\lambda }\int_{B_{\rho _1}(x_0)}\big (\exp (\frac{||R^\nabla||^2}2)-1\big)\, dv \leq \frac 1{\rho _2^\lambda }\int_{B_{\rho
_2}(x_0)}\big (\exp (\frac{||R^\nabla||^2}2)-1\big)\, dv\, ,\label{5.2}
\end{equation}
for any $0<\rho _1\leq \rho _2$, where
\begin{equation}
\lambda \le \begin{cases} n-2d_e\frac \alpha \beta \|R^\nabla \|^2 _\infty\ &\text {if } K(r)\  \text{obeys $($i$)$}\, ,\\
n-2d_e\|R^\nabla \|^2 _\infty\  &\text {if } K(r)\ \text{obeys $($ii$)$}\, ,\\
n - (n-1)\frac B{2\epsilon} -2d_ee^{\frac {A}{2\epsilon}}\|R^\nabla \|^2 _\infty\  &\text{if } K(r) \text{
obeys $($iii$)$}\, ,\\
1 + (n-1)\frac{1 + \sqrt {1+4A_1}}{2} - d_e(1 + \sqrt {1+4A})\|R^\nabla \|^2 _\infty\  &\text{if } K(r) \text{
obeys $($iv$)$}\, ,\\
1+(n-1)A_1-2d_eA\|R^\nabla \|^2 _\infty\ &\text{if } K(r) \text{
obeys $($v$)$}\, ,\\
1 + \frac{n-1}{(|B-\frac {1}{2}|+ \frac {1}{2})^{-1}} -d_e\big (1 + \sqrt {1+4B_1(1-B_1)}\big )\|R^\nabla \|^2 _\infty\ &\text{if } K(r)\ \text
{obeys $($vi$)$}\, ,\\
1+ (n-1)\frac{1 + \sqrt {1-4B}}{2} -d_e(1 + \sqrt {1+4B_1} )\|R^\nabla \|^2 _\infty\ &\text{if } K(r)\ \text{obeys $($vii$)$}\, .\end{cases}  
\label{5.3}
\end{equation}\label{T: 5.1}
\end{theorem}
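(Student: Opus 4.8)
The plan is to run the standard stress-energy/coarea monotonicity argument, anchored on the $e$-conservation law of Theorem~\ref{T: 3.11} and the pointwise estimate \eqref{4.20}. First I would dispose of the degenerate case: if $R^\nabla \equiv 0$ on $M$, then $\exp(\tfrac{\|R^\nabla\|^2}{2}) - 1 \equiv 0$ and \eqref{5.2} is the trivial equality $0 \le 0$. Otherwise, each of the seven hypotheses in \eqref{5.1} forces $d_e < \infty$ (e.g.\ $n - 2d_e\|R^\nabla\|_\infty^2 > 0$ in (ii)), and by \eqref{4.15} the supremum $d_e$ is finite only if $\inf_M \|R^\nabla\|^2 > 0$; hence $\|R^\nabla\| > 0$ on $M$, Lemma~\ref{L: 4.4} is applicable, and $\exp(\tfrac{\|R^\nabla\|^2}{2}) - 1 > 0$ everywhere.

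Set $I(\rho) = \int_{B_\rho(x_0)}\big(\exp(\tfrac{\|R^\nabla\|^2}{2}) - 1\big)\, dv$. Since $x_0$ is a pole, $|\nabla r| \equiv 1$ off $x_0$, so the coarea formula gives $I'(\rho) = \int_{\partial B_\rho(x_0)}\big(\exp(\tfrac{\|R^\nabla\|^2}{2}) - 1\big)\, ds_g$. By Theorem~\ref{T: 3.11}, $\operatorname{div} S_{e,\mathcal{YM}^0} = 0$, so applying the divergence identity \eqref{4.13} to $D = B_\rho(x_0)$ and the conservative field $X = r\nabla r$ yields $\int_{\partial B_\rho(x_0)} S_{e,\mathcal{YM}^0}(X,\nu)\, ds_g = \int_{B_\rho(x_0)}\langle S_{e,\mathcal{YM}^0}, \nabla X^\flat\rangle\, dv_g$. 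On $\partial B_\rho(x_0)$ one has $\nu = \tfrac{\partial}{\partial r}$ and $X = \rho\,\tfrac{\partial}{\partial r} = \rho\nu$, so by \eqref{3.10}, $S_{e,\mathcal{YM}^0}(X,\nu) = \rho\big[(\exp(\tfrac{\|R^\nabla\|^2}{2}) - 1) - \exp(\tfrac{\|R^\nabla\|^2}{2})\langle i_\nu R^\nabla, i_\nu R^\nabla\rangle\big] \le \rho\big(\exp(\tfrac{\|R^\nabla\|^2}{2}) - 1\big)$, whence the boundary integral is $\le \rho\, I'(\rho)$.

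For the interior integral I would feed into Lemma~\ref{L: 4.4} the Hessian bounds of Theorem~\ref{T: 4.1}: in case (i), $h_1(r) = \beta\coth(\beta r)$ and $h_2(r) = \alpha\coth(\alpha r)$; in (ii), $h_1(r) = h_2(r) = 1/r$; in (iii), $h_1(r) = \tfrac{1 - B/(2\epsilon)}{r}$ and $h_2(r) = \tfrac{e^{A/(2\epsilon)}}{r}$; and in (iv)--(vii) the analogous radial functions read off from \eqref{4.4}--\eqref{4.7}. In each case one first checks $rh_2(r) \ge 1$ as required by \eqref{4.19} (using $t\coth t \ge 1$ in (i), $e^{A/(2\epsilon)} \ge 1$ in (iii), $\tfrac{1+\sqrt{1+4A}}{2} \ge 1$ in (iv), $A \ge A_1 \ge 1$ in (v), $\tfrac{1+\sqrt{1+4B_1(1-B_1)}}{2} \ge 1$ in (vi), $\tfrac{1+\sqrt{1+4B_1}}{2} \ge 1$ in (vii)); then \eqref{4.20} gives $\langle S_{e,\mathcal{YM}^0}, \nabla X^\flat\rangle \ge \big(1 + (n-1)rh_1(r) - 2d_e\|R^\nabla\|_\infty^2\, rh_2(r)\big)\big(\exp(\tfrac{\|R^\nabla\|^2}{2}) - 1\big)$. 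The crux is that the bracket is $\ge \lambda$ with $\lambda$ as in \eqref{5.3}. In cases (ii)--(vii) the products $rh_1(r)$ and $rh_2(r)$ are constants and the bracket equals the stated $\lambda$ outright; in case (i), since $t\mapsto t\coth t$ is increasing and $t\mapsto\coth t$ is decreasing, $\alpha r\coth(\alpha r) \le \tfrac{\alpha}{\beta}\,\beta r\coth(\beta r)$, so the bracket is $\ge 1 + \big((n-1) - \tfrac{2\alpha}{\beta}d_e\|R^\nabla\|_\infty^2\big)\beta r\coth(\beta r)$, and the sign condition $(n-1)\beta - 2d_e\alpha\|R^\nabla\|_\infty^2 \ge 0$ together with $\beta r\coth(\beta r) \ge 1$ forces this to be $\ge n - \tfrac{2\alpha}{\beta}d_e\|R^\nabla\|_\infty^2 = \lambda$. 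Hence $\int_{B_\rho(x_0)}\langle S_{e,\mathcal{YM}^0}, \nabla X^\flat\rangle\, dv_g \ge \lambda\, I(\rho)$.

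Combining the two bounds gives $\lambda\, I(\rho) \le \rho\, I'(\rho)$ for every $\rho > 0$, i.e.\ $\tfrac{d}{d\rho}\big(I(\rho)\rho^{-\lambda}\big) = \rho^{-\lambda-1}\big(\rho I'(\rho) - \lambda I(\rho)\big) \ge 0$, so $\rho\mapsto I(\rho)\rho^{-\lambda}$ is non-decreasing; evaluating at $\rho_1 \le \rho_2$ yields \eqref{5.2}. The only genuine work lies in the last step of the third paragraph — verifying the seven inequalities ``bracket $\ge \lambda$'' — and the main obstacle is the pinched hyperbolic case (i), which, unlike the scale-invariant cases, requires the monotonicity of $t\mapsto t\coth t$ and $t\mapsto\coth t$ together with the sign condition in \eqref{5.1} rather than a bare substitution of constants.
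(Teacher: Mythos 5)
Your proposal is correct and follows essentially the same route as the paper: the $e$-conservation law with $X=r\nabla r$ in \eqref{4.13}, the estimate \eqref{4.20} from Lemma \ref{L: 4.4} fed by the Hessian comparisons of Theorem \ref{T: 4.1} (with the same case-(i) $\coth$ monotonicity argument and the same constant brackets in (ii)--(vii)), the boundary bound, and the coarea formula. The only differences are cosmetic: you dispose of $R^\nabla\equiv 0$ and justify $\|R^\nabla\|>0$ (needed for Lemma \ref{L: 4.4}) via finiteness of $d_e$ rather than the paper's continuity remark, and you conclude by differentiating $\rho^{-\lambda}I(\rho)$ instead of integrating $I'/I\ge\lambda/\rho$ as in \eqref{5.9}--\eqref{5.10}.
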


\begin{proof}
Take a smooth vector field $X=r\nabla r$ on $M\, .$ If $K(r)$ satisfies $($i$)$, then by Theorem \ref{4.1} and the increasing function $\alpha r \coth (\alpha r)\to 1\, $ as $r \to 0\, ,$ \eqref{4.19} holds. Now Lemma \ref{4.1} is applicable and by \eqref{4.20}, we have on $B_\rho(x_0)\backslash\{x_0\}\, ,$ for every $\rho >0,$
\begin{equation}
\begin{aligned}
 &\langle S_{e,\mathcal{YM}^0 },\nabla X^\flat \rangle\\
  &\geq
\,\big(1+(n-1)\beta r \coth (\beta r)-2 d_e \alpha r \coth
(\alpha r)\|R^\nabla \|^2 _\infty\big)\big (\exp (\frac{||R^\nabla||^2}2)-1\big)\\ 
&=\,\big(1+ \beta r \coth (\beta r)(n-1-2 \cdot d_e \cdot \frac{\alpha r \coth
(\alpha r)}{\beta r \coth (\beta r)}\|R^\nabla \|^2 _\infty)  \big)\big (\exp (\frac{||R^\nabla||^2}2)-1\big)\\
& > \,\big(1+ 1 \cdot (n-1-2\cdot d_e\cdot \frac{\alpha }{\beta } \cdot 1\|R^\nabla \|^2 _\infty)  \big)\big (\exp (\frac{||R^\nabla||^2}2)-1\big)\\
&\ge \lambda \big (\exp (\frac{||R^\nabla||^2}2)-1\big)\, ,
\end{aligned}\label{5.4}
\end{equation}
provided that $$n-1-2\cdot d_e \cdot \frac{\alpha }{\beta }\|R^\nabla \|^2 _\infty    \ge 0\, ,$$ since $$\beta r \coth
(\beta r) > 1\ \text{for}\ r > 0\ , \text{and}\ \frac{\coth
(\alpha r)}{\coth (\beta r)} < 1\, ,\text{for}\ 0 < \beta < \alpha\, ,
$$ 
and $\coth$ is a decreasing function. Similarly, from Theorem \ref{T: 4.1} and Lemma \ref{L: 4.4}, the above inequality holds for the cases (ii) - (vii) on $B_\rho(x_0)\backslash\{x_0\}\, .$  Thus, by the continuity of $\langle S_{e,\mathcal{YM} },\nabla X^\flat \rangle$ and $ \exp (\frac{||R^\nabla||^2}2)\, ,$ and \eqref{3.10}, we
have for every $\rho >0,$
\begin{equation}
\aligned
&\langle S_{e,\mathcal{YM}^0 },\nabla X^\flat \rangle\geq \lambda
\big (\exp (\frac{||R^\nabla||^2}2)-1\big)  \qquad \text{in} \quad B_\rho(x_0) \\
& \rho\, \, \big (\exp (\frac{||R^\nabla||^2}2)-1\big)  )\geq S_{e,\mathcal{YM}^0 }(X,\frac \partial {\partial r}) \qquad \text{on} \quad \partial B_\rho(x_0)\, . \endaligned\label{5.5}
\end{equation}
It follows from \eqref{4.13} and \eqref{5.5} that
\begin{equation}
\rho\int_{\partial B_\rho(x_0)}\big (\exp (\frac{||R^\nabla||^2}2)-1\big)\,   ds \geq \lambda
\int_{B_\rho(x_0)}\big (\exp (\frac{||R^\nabla||^2}2)-1\big)\, dv\, . \label{5.6}
\end{equation}
Hence, we get from \eqref{5.6} the following
\begin{equation}
\frac{\int_{\partial B_\rho(x_0)}\big (\exp (\frac{||R^\nabla||^2}2)-1\big )\, ds}{\int_{B_\rho(x_0)}
\big (\exp (\frac{||R^\nabla||^2}2)-1\big )\, dv} \geq \frac \lambda \rho\, .  \label{5.7}
\end{equation}
The coarea formula implies that
\begin{equation}
\frac {d}{d\rho}\int_{B_\rho(x_0)}\big (\exp (\frac{||R^\nabla||^2}2)-1\big )\, dv =\int_{\partial B_\rho(x_0)}
\big (\exp (\frac{||R^\nabla||^2}2)-1\big)\, ds\, .\label{5.8}
\end{equation}
Thus we have
\begin{equation}
\frac{\frac {d}{d\rho}\int_{B_\rho(x_0)}\exp (\frac{||R^\nabla||^2}2)-1\, dv}{\int_{B_\rho(x_0)}\exp (\frac{||R^\nabla||^2}2)-1\, dv}\\
\geq \frac \lambda
\rho
\label{5.9}
\end{equation}
for a.e. $\rho >0\, .$ By integration \eqref{5.9} over $[\rho _1,\rho _2]$, we have
\begin{equation}
\ln \int_{B_{\rho _2}(x_0)}\big (\exp (\frac{||R^\nabla||^2}2)-1\big )\, dv -\ln
\int_{B_{\rho _1}(x_0)}\big (\exp (\frac{||R^\nabla||^2}2)-1\big )\, dv \geq \ln \rho
_2^\lambda -\ln \rho _1^\lambda\, . \label{5.10}
\end{equation}
This proves \eqref{5.2}.
\end{proof}

\begin{corollary} Suppose that $M$ has constant sectional
curvature $-\alpha ^2\le 0$ and
$$
\begin{cases}
n-1-2d_e \|R^\nabla \|^2 _\infty \ge 0 \ & \text{if} \  \ \alpha \ne 0;\\
n - 2d_e\|R^\nabla \|^2 _\infty > 0\ &\text{if}\ \ \alpha=0.
\end{cases}
$$
Let
$R^\nabla \in A^2\big (Ad(P)\big )$ be an exponential Yang-Mills
field. Then
\begin{equation}
\begin{aligned} 
&\frac 1{\rho _1^{n-2d_e\|R^\nabla \|^2 _\infty}}\int_{B_{\rho _1}(x_0)}\big (\exp (\frac{||R^\nabla||^2}2)-1\big ) \, dv \\
&\leq \frac 1{\rho _2^{n-2d_e\|R^\nabla \|^2 _\infty}} \int_{B_{\rho
_2}(x_0)}\big (\exp (\frac{||R^\nabla||^2}2)-1\big )\,  dv\, ,
\end{aligned}\label{5.11}
\end{equation}
for any $x_0\in M$ and $0<\rho _1\leq \rho _2$.\label{C: 5.2}
\end{corollary}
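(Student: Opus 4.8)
The plan is to obtain Corollary \ref{C: 5.2} as a direct specialization of Theorem \ref{T: 5.1}, observing that a manifold $M$ of constant sectional curvature $-\alpha^2\le 0$ has radial curvature $K(r)\equiv-\alpha^2$, so it falls under the degenerate pinching case of the hypotheses \eqref{5.1}. No new analytic machinery is needed: the coarea-formula and divergence-theorem argument is already packaged inside the proof of Theorem \ref{T: 5.1}; the task is only to check that the hypotheses there are met and to read off the exponent $\lambda$.

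First I would treat the case $\alpha\neq 0$. Here $-\alpha^2\le K(r)\le-\alpha^2$, so condition (i) of \eqref{5.1} applies with $\beta=\alpha$. Its curvature requirement $(n-1)\beta-2d_e\alpha\|R^\nabla\|^2_\infty\ge 0$ becomes $\alpha\big((n-1)-2d_e\|R^\nabla\|^2_\infty\big)\ge 0$, which, since $\alpha>0$, is exactly the standing hypothesis $n-1-2d_e\|R^\nabla\|^2_\infty\ge 0$. The associated exponent in \eqref{5.3} is $\lambda=n-2d_e\frac{\alpha}{\beta}\|R^\nabla\|^2_\infty=n-2d_e\|R^\nabla\|^2_\infty$, and substituting this $\lambda$ into \eqref{5.2} yields precisely \eqref{5.11}. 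One should note that the Hessian comparison \eqref{4.1} is not vacuous when $\alpha=\beta$: it forces $\mathrm{Hess}(r)=\alpha\coth(\alpha r)(g-dr\otimes dr)$ exactly, and \eqref{4.19} holds because $\alpha r\coth(\alpha r)\ge 1$ for $r>0$, so Lemma \ref{L: 4.4} applies verbatim.

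Next I would treat the flat case $\alpha=0$, where $K(r)\equiv 0$. This is condition (ii) of \eqref{5.1}, whose requirement $n-2d_e\|R^\nabla\|^2_\infty>0$ is again the standing hypothesis, and whose exponent in \eqref{5.3} is once more $\lambda=n-2d_e\|R^\nabla\|^2_\infty$; applying \eqref{5.2} gives \eqref{5.11} in this case as well.

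The only points requiring care — and what I would regard as the main, though essentially bookkeeping, obstacle — are: (a) verifying that the complete constant-curvature manifold in question actually possesses a pole, so that Theorem \ref{T: 5.1} is applicable (this holds once $M$ is simply connected, by the Cartan--Hadamard theorem, which is the natural ambient setting here and the reason the conclusion is stated for arbitrary $x_0\in M$); and (b) confirming that the degenerate pinching $\alpha=\beta$ (resp.\ the vanishing of both bounds) is genuinely permitted in \eqref{5.1}, and that the $\lambda$ prescribed by \eqref{5.3} collapses to the single expression $n-2d_e\|R^\nabla\|^2_\infty$ in both the hyperbolic and Euclidean cases. With these in hand, Corollary \ref{C: 5.2} follows immediately.
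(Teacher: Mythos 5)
Your proposal is correct and follows exactly the paper's own route: the paper proves Corollary \ref{C: 5.2} by specializing Theorem \ref{T: 5.1} to case (i) with $\alpha=\beta\ne 0$ and to case (ii) when $\alpha=0$, with the exponent in \eqref{5.3} reducing in both cases to $n-2d_e\|R^\nabla\|^2_\infty$. Your additional remarks on the existence of a pole and the admissibility of the degenerate pinching $\alpha=\beta$ are sound but not part of the paper's (one-line) argument.
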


\begin{proof} In Theorem \ref{T: 5.1}, if we take $\alpha =\beta \ne 0$ for
the case (i) or $\alpha=0$ for the case (ii), this corollary follows immediately. \end{proof}

\begin{proposition} Let $(M,g)$ be an $n-$dimensional
complete Riemannian manifold whose radial curvature satisfies

\begin{equation}
{\rm(viii)} -Ar^{2q}\leq K(r)\leq -Br^{2q}\quad \text{with}\quad  A\geq B>0\quad \text{and}\quad q>0.\label{5.12}
\end{equation}

Let $R^\nabla$ be an exponential Yang-Mills field, and 
\begin{equation}
\delta
:=(n-1)B_0-2d_e\|R^\nabla \|^2 _\infty\sqrt{A}\coth \sqrt{A} \geq 0\, ,\label{5.13}
\end{equation}
where $B_0$ is as in \eqref{4.9}.
Suppose that \eqref{5.18} holds. Then
\begin{equation}
\begin{aligned}
&\frac 1{\rho _1^{1 + \delta}}\int_{B_{\rho
_1}(x_0)-B_1(x_0)}\big (\exp (\frac{||R^\nabla||^2}2)-1\big ) \, dv \\
&\leq \frac 1{\rho
_2^{1 + \delta}}\int_{B_{\rho
_2}(x_0)-B_1(x_0)}\big (\exp (\frac{||R^\nabla||^2}2)-1\big ) \, dv\, ,
\end{aligned}\label{5.14}
\end{equation}
for any $1\leq \rho _1\leq \rho _2$.\label{P: 5.3}
\end{proposition}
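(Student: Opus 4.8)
The plan is to reproduce the argument of Theorem~\ref{T: 5.1}, but with the ball $B_\rho(x_0)$ replaced by the annular region $D_\rho:=B_\rho(x_0)\setminus\overline{B_1(x_0)}$; this replacement is forced because the Hessian comparison \eqref{4.8} coming from condition (viii) is only valid for $r\ge 1$. First I would record that, since $R^\nabla$ is an exponential Yang--Mills field, Theorem~\ref{T: 3.11} gives $\operatorname{div}S_{e,\mathcal{YM}^0}=0$, so the integral identity \eqref{4.13} holds on every bounded domain with $C^1$ boundary, in particular on each $D_\rho$ with $\rho\ge 1$; its boundary is the disjoint union $\partial B_\rho(x_0)\cup\partial B_1(x_0)$, with outward unit normal $\partial/\partial r$ on the outer sphere and $-\partial/\partial r$ on the inner one. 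We may assume $\|R^\nabla\|>0$ on $M$ (the case $R^\nabla\equiv 0$ being trivial), so the pointwise computation behind Lemma~\ref{L: 4.4} is available wherever the Hessian estimate holds.

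Next I would insert into \eqref{4.18} the functions $h_1(r)=B_0r^q$ and $h_2(r)=\bigl(\sqrt{A}\coth\sqrt{A}\bigr)r^q$ provided by Theorem~\ref{T: 4.1}, case (viii), on $M\setminus\overline{B_1(x_0)}$; since $\sqrt{A}\coth\sqrt{A}>1$ and $r\ge 1$, the requirement $rh_2(r)\ge 1$ of \eqref{4.19} holds on $D_\rho$. Taking $X=r\nabla r$ and applying the estimate \eqref{4.20} of Lemma~\ref{L: 4.4} on $D_\rho$ then gives
\[
\langle S_{e,\mathcal{YM}^0},\nabla X^\flat\rangle\ \ge\ \bigl(1+(n-1)B_0r^{q+1}-2d_e\|R^\nabla\|_\infty^2\sqrt{A}\coth\sqrt{A}\,r^{q+1}\bigr)\bigl(\exp(\tfrac{\|R^\nabla\|^2}{2})-1\bigr)=\bigl(1+\delta\,r^{q+1}\bigr)\bigl(\exp(\tfrac{\|R^\nabla\|^2}{2})-1\bigr),
\]
with $\delta$ as in \eqref{5.13}; since $\delta\ge 0$ and $r^{q+1}\ge 1$ on $D_\rho$, the right-hand side is $\ge(1+\delta)\bigl(\exp(\tfrac{\|R^\nabla\|^2}{2})-1\bigr)$. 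On $\partial B_\rho(x_0)$, using \eqref{3.10} with $X=r\,\partial/\partial r$ exactly as in \eqref{5.5}, one has $S_{e,\mathcal{YM}^0}(X,\partial/\partial r)=\rho\bigl(\exp(\tfrac{\|R^\nabla\|^2}{2})-1\bigr)-\rho\exp(\tfrac{\|R^\nabla\|^2}{2})\|i_{\partial/\partial r}R^\nabla\|^2\le\rho\bigl(\exp(\tfrac{\|R^\nabla\|^2}{2})-1\bigr)$.

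Inserting both inequalities into \eqref{4.13} over $D_\rho$ and setting $\Psi(\rho):=\int_{D_\rho}\bigl(\exp(\tfrac{\|R^\nabla\|^2}{2})-1\bigr)\,dv$, with $\Psi'(\rho)=\int_{\partial B_\rho(x_0)}\bigl(\exp(\tfrac{\|R^\nabla\|^2}{2})-1\bigr)\,ds$ by the coarea formula, I would obtain
\[
\rho\,\Psi'(\rho)\ \ge\ (1+\delta)\,\Psi(\rho)+\int_{\partial B_1(x_0)}\Bigl(\exp(\tfrac{\|R^\nabla\|^2}{2})-1-\exp(\tfrac{\|R^\nabla\|^2}{2})\|i_{\partial/\partial r}R^\nabla\|^2\Bigr)\,ds\,.
\]
At this point the standing hypothesis \eqref{5.18} is invoked: it is precisely what makes the $\partial B_1(x_0)$-integral nonnegative, so that it may be dropped, leaving $\rho\,\Psi'(\rho)\ge(1+\delta)\,\Psi(\rho)$ for a.e.\ $\rho\ge 1$, i.e.\ $\frac{d}{d\rho}\bigl(\rho^{-(1+\delta)}\Psi(\rho)\bigr)\ge 0$. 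Integrating over $[\rho_1,\rho_2]$ for $1\le\rho_1\le\rho_2$ then yields \eqref{5.14}.

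The main obstacle, and the reason the monotone quantity here is the annular integral over $B_\rho(x_0)-B_1(x_0)$ rather than the full-ball integral of Theorem~\ref{T: 5.1}, is the contribution of the inner geodesic sphere $\partial B_1(x_0)$: condition (viii) provides no useful Hessian control near the pole, so the comparison scheme can only be run on $\{r\ge 1\}$, and the resulting $\partial B_1(x_0)$-boundary term cannot be discarded without the extra hypothesis \eqref{5.18}. All remaining ingredients — Theorem~\ref{T: 4.1}, the computation of $\langle S_{e,\mathcal{YM}^0},\nabla X^\flat\rangle$ via Lemma~\ref{L: 4.4}, the boundary estimate on $\partial B_\rho(x_0)$, and the coarea/integration step — are routine and parallel the proof of Theorem~\ref{T: 5.1}.
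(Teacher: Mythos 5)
Your proposal is correct and follows essentially the same route as the paper: the vector field $X=r\nabla r$, the Hessian bounds of Theorem \ref{T: 4.1}(viii) fed into Lemma \ref{L: 4.4} to get $\langle S_{e,\mathcal{YM}^0},\nabla X^\flat\rangle\ge(1+\delta r^{q+1})\big(\exp(\tfrac{\|R^\nabla\|^2}{2})-1\big)$, the divergence identity \eqref{4.13} on the annulus $B_\rho(x_0)\setminus B_1(x_0)$ with the outer boundary estimate and hypothesis \eqref{5.18} used to discard the inner boundary term, and finally the coarea formula and integration of the differential inequality. Your added checks (that $rh_2(r)\ge 1$ holds for $r\ge 1$ and that the case $R^\nabla\equiv 0$ is trivial so Lemma \ref{L: 4.4} applies) are harmless refinements of the same argument.
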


\begin{proof} Take $X=r\nabla r$. Applying Theorem \ref{T: 4.1}, \eqref{4.19}, and \eqref{4.20}, we have
\begin{equation}
\aligned \langle S_{e,\mathcal{YM}^0 },\nabla X^\flat \rangle &\geq \big (\exp (\frac{||R^\nabla||^2}2)-1\big)(1+\delta r^{q+1}\big)
\endaligned
\label{5.15}
\end{equation}
and
\begin{equation}
\aligned & S_{e,\mathcal{YM}^0 }(X,\frac \partial {\partial r}) = \exp (\frac{||R^\nabla||^2}2) \big ( 1 - \langle i_{\frac\partial{\partial r}}R^\nabla, i_{\frac\partial{\partial r}}R^\nabla \rangle \big ) - 1\qquad \text{on} \quad \partial B_1(x_0) \\
&  S_{e,\mathcal{YM}^0 }(X,\frac \partial {\partial r}) = \rho (\exp (\frac{||R^\nabla||^2}2)  \big (1 - \langle i_{\frac\partial{\partial r}}R^\nabla, i_{\frac\partial{\partial r}}R^\nabla \rangle \big ) - \rho\qquad \text{on} \quad \partial B_\rho(x_0)\, . \endaligned
\label{5.16}
\end{equation}

It follows from \eqref{4.13} that

\begin{equation} \aligned & \rho\int_{\partial B_\rho(x_0)} 
\exp (\frac{||R^\nabla||^2}2) \big (1 - \langle i_{\frac\partial{\partial r}}R^\nabla, i_{\frac\partial{\partial r}}R^\nabla \rangle \big ) - 1\,  ds \\
& \qquad - \int_{\partial B_1(x_0)} \exp (\frac{||R^\nabla||^2}2) \big (1 - \langle i_{\frac\partial{\partial r}}R^\nabla, i_{\frac\partial{\partial r}}R^\nabla \rangle \big ) - 1\,  ds\\
& \ge  \int_{B_\rho(x_0)-B_1(x_0)} (1+\delta
r^{q+1}) \big (\exp (\frac{||R^\nabla||^2}2)-1\big)\, .
\endaligned\label{5.17}
\end{equation}
Whence, if
\begin{equation}
\int_{\partial B_1(x_0)} \exp (\frac{||R^\nabla||^2}2) \big (1 - \langle i_{\frac\partial{\partial r}}R^\nabla, i_{\frac\partial{\partial r}}R^\nabla \rangle \big ) - 1\,  ds \ge 0\, ,\label{5.18}\end{equation}
then
\begin{equation}
\rho\int_{\partial B_\rho(x_0)} \big (\exp (\frac{||R^\nabla||^2}2)-1\big ) \, ds \geq (1+\delta
)\int_{B_\rho(x_0)-B_1(x_0)}\big (\exp (\frac{||R^\nabla||^2}2)-1\big) \, dv\, , \label{5.19}\end{equation}
for any $\rho > 1\, .$
Coarea formula then implies
\begin{equation}
\frac{d\int_{B_\rho(x_0)-B_1(x_0)}\big (\exp (\frac{||R^\nabla||^2}2)-1\big ) \, dv}{
\int_{B_\rho(x_0)-B_1(x_0)}\big (\exp (\frac{||R^\nabla||^2}2)-1\big ) \, dv}\geq \frac{1+\delta} \rho d\rho
\label{5.20}
\end{equation}
for a.e. $\rho\geq 1$. Integrating \eqref{5.20} over $[\rho _1,\rho _2]$, we
get
\begin{equation}
\begin{aligned}
&\ln \big(\int_{B_{\rho _2}(x_0)-B_1(x_0)}\big (\exp (\frac{||R^\nabla||^2}2)-1\big )\, dv \big)\\
&\ \ -\ln
\big( \int_{B_{\rho _1}(x_0)-B_1(x_0)}\big (\exp (\frac{||R^\nabla||^2}2)-1\big )\, dv \big)  \\
&\geq  (1+\delta) \ln \rho _2- (1+\delta) \ln \rho _1\, .
\end{aligned}
\label{5.21}\end{equation}

Hence we prove the proposition.
\end{proof}

\begin{corollary}  Let $K(r)\, $ and $\delta$ be as in Proposition \ref{5.3}, satisfying \eqref{5.12} and \eqref{5.13} respectively, $\, R^\nabla $ be an exponential Yang-Mills field. Suppose
\begin{equation}
\exp (\frac{||R^\nabla||^2}2) \big (1 - \langle i_{\frac\partial{\partial r}}R^\nabla, i_{\frac\partial{\partial r}}R^\nabla \rangle \big ) \ge 1\label{5.22}
\end{equation}
on $\partial B_1\, .$ 
Then \eqref{5.14} holds. \label{C: 5.4}
\end{corollary}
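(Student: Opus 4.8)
The plan is to obtain Corollary \ref{C: 5.4} as an immediate specialization of Proposition \ref{P: 5.3}: the only gap to fill is that the pointwise hypothesis \eqref{5.22} on $\partial B_1(x_0)$ implies the integral hypothesis \eqref{5.18} required by Proposition \ref{P: 5.3}. Once that is checked, Proposition \ref{P: 5.3} delivers \eqref{5.14} verbatim.

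First I would observe that the integrand appearing in \eqref{5.18}, namely $\exp (\tfrac{\|R^\nabla\|^2}{2})\big(1 - \langle i_{\frac{\partial}{\partial r}}R^\nabla, i_{\frac{\partial}{\partial r}}R^\nabla\rangle\big) - 1$, is precisely the quantity on the left-hand side of \eqref{5.22} minus $1$. Thus \eqref{5.22}, which asserts that $\exp (\tfrac{\|R^\nabla\|^2}{2})\big(1 - \langle i_{\frac{\partial}{\partial r}}R^\nabla, i_{\frac{\partial}{\partial r}}R^\nabla\rangle\big) \ge 1$ at every point of $\partial B_1(x_0)$, says exactly that this integrand is pointwise nonnegative there. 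Integrating a nonnegative function against the $(n-1)$-dimensional volume element $ds$ over $\partial B_1(x_0)$ yields $\int_{\partial B_1(x_0)}\big(\exp (\tfrac{\|R^\nabla\|^2}{2})(1 - \langle i_{\frac{\partial}{\partial r}}R^\nabla, i_{\frac{\partial}{\partial r}}R^\nabla\rangle) - 1\big)\, ds \ge 0$, which is \eqref{5.18}.

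With \eqref{5.18} in hand, all hypotheses of Proposition \ref{P: 5.3} are satisfied: the radial curvature obeys \eqref{5.12}, the constant $\delta$ defined in \eqref{5.13} is nonnegative, $R^\nabla$ is an exponential Yang-Mills field, and the integral condition \eqref{5.18} holds. Proposition \ref{P: 5.3} then gives the monotonicity inequality \eqref{5.14} for all $1 \le \rho_1 \le \rho_2$, which is the desired conclusion.

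I do not anticipate a genuine obstacle here, since the argument is a one-line reduction. The only point to keep in view is that the structural hypotheses feeding into Proposition \ref{P: 5.3}'s proof --- the positivity $\|R^\nabla\| > 0$ needed to invoke Lemma \ref{L: 4.4}, and the condition $r h_2(r) \ge 1$ for $r \ge 1$ coming from Theorem \ref{T: 4.1}(viii) via $\sqrt{A}\coth\sqrt{A}\cdot r^q \ge 1$ --- are inherited unchanged, so no additional verification is required. Hence Corollary \ref{C: 5.4} is simply the version of Proposition \ref{P: 5.3} in which the somewhat opaque averaged condition \eqref{5.18} is replaced by the cleaner, directly checkable pointwise condition \eqref{5.22} on the unit geodesic sphere.
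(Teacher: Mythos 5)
Your proposal is correct and follows exactly the paper's route: the pointwise bound \eqref{5.22} on $\partial B_1(x_0)$ makes the integrand of \eqref{5.18} nonnegative, so \eqref{5.18} holds, and Proposition \ref{P: 5.3} then gives \eqref{5.14}. This is precisely the paper's own (two-line) proof, with the integration step spelled out.
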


\begin{proof}
The assumption \eqref{5.22} implies that \eqref{5.18} holds, and the assertion follows from Proposition \ref{P: 5.3}.
\end{proof}

\section{Vanishing theorems for exponential Yang-Mills fields}

\begin{theorem}[Vanishing Theorem]\label{T: 6.1} Suppose that the radial curvature $K(r)$ of $M$ satisfies one of the seven growth conditions in \eqref{5.1} $(\rm{i})$-$(\rm{vii}),$
Theorem \ref{T: 5.1}. Let $R^\nabla$ be an exponential Yang-Mills field satisfying the $\mathcal {YM}_e^0$-energy
functional growth condition
\begin{equation}
\int_{B_\rho(x_0)} \big (\exp(\frac{||R^\nabla ||^2}2) - 1\big )\, dv = o(\rho^\lambda )\quad \text{as
} \rho\rightarrow \infty\, ,\label{6.1}
\end{equation}
where $\lambda $ is given by \eqref{5.3}. Then $\exp(\frac{||R^\nabla ||^2}2)\equiv 1\, ,$ and hence $R^\nabla  \equiv 0$.  In particular, every exponential Yang-Mills field $R^{\nabla}$  with finite normalized exponential Yang-Mills $\mathcal {YM}_e^0$-energy
functional  vanishes on $M$.
\end{theorem}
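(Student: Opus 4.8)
The plan is to obtain the vanishing theorem as the standard ``macroscopic'' consequence of the monotonicity formula Theorem~\ref{T: 5.1}. First I would apply that theorem: under any one of the seven growth conditions \eqref{5.1}(i)--(vii), for all $0<\rho_1\le\rho_2$ one has
\[
\frac{1}{\rho_1^\lambda}\int_{B_{\rho_1}(x_0)}\big(\exp(\tfrac{||R^\nabla||^2}{2})-1\big)\,dv \le \frac{1}{\rho_2^\lambda}\int_{B_{\rho_2}(x_0)}\big(\exp(\tfrac{||R^\nabla||^2}{2})-1\big)\,dv,
\]
with $\lambda$ as in \eqref{5.3}. Holding $\rho_1$ fixed and letting $\rho_2\to\infty$, the energy growth hypothesis \eqref{6.1} forces the right-hand side to tend to $0$; hence $\int_{B_{\rho_1}(x_0)}\big(\exp(\tfrac{||R^\nabla||^2}{2})-1\big)\,dv\le 0$ for every $\rho_1>0$.

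Next I would combine this with the pointwise nonnegativity of the integrand. By \eqref{3.8} (the normalization advantage of $\mathcal{YM}_e^0$), $\exp(\tfrac{||R^\nabla||^2}{2})-1\ge 0$ everywhere, so each integral above is also $\ge 0$, hence equals $0$. Since $R^\nabla$ is smooth, $\exp(\tfrac{||R^\nabla||^2}{2})-1$ is a continuous nonnegative function with vanishing integral over $B_{\rho_1}(x_0)$, so it vanishes identically there; as $x_0$ is a pole, $M=\bigcup_{\rho_1>0}B_{\rho_1}(x_0)$, and therefore $\exp(\tfrac{||R^\nabla||^2}{2})\equiv 1$ on $M$, i.e. $||R^\nabla||^2\equiv 0$ and $R^\nabla\equiv 0$.

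For the final assertion, suppose $\mathcal{YM}_e^0(\nabla)=\int_M\big(\exp(\tfrac{||R^\nabla||^2}{2})-1\big)\,dv<\infty$. Then $\int_{B_\rho(x_0)}\big(\exp(\tfrac{||R^\nabla||^2}{2})-1\big)\,dv\le\mathcal{YM}_e^0(\nabla)$ is bounded, so it is $o(\rho^\lambda)$ as $\rho\to\infty$ provided $\lambda>0$. It then remains only to observe that each exponent listed in \eqref{5.3} is positive under the corresponding hypothesis in \eqref{5.1}: for instance in case (i), $(n-1)\beta-2d_e\alpha\|R^\nabla\|_\infty^2\ge 0$ yields $2d_e\tfrac{\alpha}{\beta}\|R^\nabla\|_\infty^2\le n-1$, so one may take $\lambda=n-2d_e\tfrac{\alpha}{\beta}\|R^\nabla\|_\infty^2\ge 1>0$, and cases (ii)--(vii) are checked in the same way. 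Hence \eqref{6.1} holds automatically and the first part applies.

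I do not anticipate a genuine obstacle here, since all the analytic content is already packed into Theorem~\ref{T: 5.1}, which rests on the $e$-conservation law (Theorem~\ref{T: 3.11}), the Hessian comparison Theorem~\ref{T: 4.1}, and the key estimate \eqref{4.20} built on the $e$-degree $d_e$. The only points that need care are the legitimacy of passing $\rho_2\to\infty$ inside the fixed-radius integral (supplied by monotonicity together with \eqref{6.1}), the pointwise positivity of the integrand (precisely the reason for working with the \emph{normalized} functional rather than $\mathcal{YM}_e$), and the small bookkeeping that $\lambda>0$ in the finite-energy reduction.
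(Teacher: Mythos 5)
Your proposal is correct and follows exactly the route the paper intends: the paper's proof of Theorem \ref{T: 6.1} is simply ``This follows at once from Theorem \ref{T: 5.1},'' and your elaboration (fix $\rho_1$, let $\rho_2\to\infty$ under \eqref{6.1}, use nonnegativity and continuity of $\exp(\frac{||R^\nabla||^2}{2})-1$, then check $\lambda>0$ for the finite-energy case) is the standard unpacking of that one-line argument. No gaps beyond those already implicit in the paper's own statement of the monotonicity formula.
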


\begin{proof} This follows at once from Theorem \ref{T: 5.1}. 
\end{proof}

\begin{proposition} Let $(M,g)$ be an $n-$dimensional
complete Riemannian manifold whose radial curvature satisfies
\eqref{5.12} $(\rm{viii})\, ,$ Proposition \ref{5.1}. 
Let $\delta$ be as in \eqref{5.13} in which $B_0$ is as in \eqref{4.9}.
Suppose \eqref{5.18} holds.  Then every exponential Yang-Mills field $R^\nabla$ with the growth
condition \begin{equation}
\int_{B_\rho(x_0)-B_1(x_0)}\big (\exp(\frac{||R^\nabla ||^2}2) - 1\big )\,  dv = o(\rho^{1 + \delta
})\text{\quad as }\rho\rightarrow \infty\label{6.2}\end{equation}
 vanishes on $M-B_1(x_0)\, ,$
In particular, if $R^\nabla
$ has finite normlalized exponential Yang-Mills energy on
$M-B_1(x_0)$, then $R^\nabla \equiv 0$ on
$M-B_1(x_0)$.\label{P: 6.2}
\end{proposition}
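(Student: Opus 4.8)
The plan is to deduce the statement directly from the monotonicity formula of Proposition~\ref{P: 5.3}, via the standard ``let the outer radius tend to infinity'' argument (this is to Proposition~\ref{P: 5.3} exactly as Theorem~\ref{T: 6.1} is to Theorem~\ref{T: 5.1}). First I would check that the hypotheses of Proposition~\ref{P: 5.3} are in force: the radial curvature obeys \eqref{5.12}, the number $\delta$ of \eqref{5.13} (with $B_0$ as in \eqref{4.9}) is defined and satisfies $\delta\ge 0$, and \eqref{5.18} is assumed; hence the monotonicity inequality \eqref{5.14} holds for all $1\le\rho_1\le\rho_2$.

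Next I would fix $\rho_1>1$ and let $\rho_2\to\infty$ in \eqref{5.14}. By the growth hypothesis \eqref{6.2}, the right-hand side of \eqref{5.14} tends to $0$; therefore the left-hand side is $\le 0$, i.e.
\begin{equation*}
\int_{B_{\rho_1}(x_0)-B_1(x_0)}\big(\exp(\frac{\|R^\nabla\|^2}2)-1\big)\,dv\le 0 .
\end{equation*}
On the other hand $\exp(\frac{\|R^\nabla\|^2}2)-1\ge 0$ everywhere, with equality at a point $x$ precisely when $R^\nabla(x)=0$ --- this is the pointwise content of \eqref{3.8}. Consequently the integral above equals $0$, and since its integrand is continuous and nonnegative it must vanish identically on $B_{\rho_1}(x_0)-B_1(x_0)$. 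As $\rho_1>1$ was arbitrary, I conclude $\exp(\frac{\|R^\nabla\|^2}2)\equiv 1$, and hence $R^\nabla\equiv 0$, on $M-B_1(x_0)$.

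For the final ``in particular'' clause I would argue that finite normalized exponential Yang--Mills energy on $M-B_1(x_0)$, i.e. $\int_{M-B_1(x_0)}\big(\exp(\frac{\|R^\nabla\|^2}2)-1\big)\,dv<\infty$, forces $\int_{B_\rho(x_0)-B_1(x_0)}\big(\exp(\frac{\|R^\nabla\|^2}2)-1\big)\,dv$ to be bounded in $\rho$, hence to be $o(\rho^{1+\delta})$ as $\rho\to\infty$ because $1+\delta\ge 1>0$ (recall $\delta\ge 0$); so \eqref{6.2} holds and the previous paragraph applies.

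I do not expect any serious obstacle here: all the genuine analytic work --- choosing the conservative field $X=r\nabla r$, applying the Hessian comparison estimates of Theorem~\ref{T: 4.1}, and deriving the resulting differential inequality for the annular energy --- has already been carried out in Proposition~\ref{P: 5.3}. The only points needing attention are keeping the boundary-term hypothesis \eqref{5.18} in force (for which \eqref{5.22} is a convenient sufficient condition) so that \eqref{5.14} genuinely applies, and invoking \eqref{3.8} to upgrade the integral inequality $\int(\exp-1)\le 0$ to the pointwise vanishing of $R^\nabla$ on $M-B_1(x_0)$.
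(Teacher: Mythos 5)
Your proposal is correct and follows essentially the same route as the paper: the paper's proof is exactly the one-line reduction to Proposition \ref{P: 5.3}, and you have simply written out the standard steps that reduction presupposes (fix $\rho_1$, let $\rho_2\to\infty$ in \eqref{5.14}, use \eqref{6.2} to kill the right-hand side, then use nonnegativity of $\exp(\frac{\|R^\nabla\|^2}{2})-1$ to get pointwise vanishing, with the finite-energy case handled by boundedness of the annular integrals and $1+\delta>0$). No gaps.
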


\begin{proof} This follows at once from Proposition \ref{P: 5.3}.
\end{proof}
\section{Vanishing theorems from exponential Yang-Mills fields to $F$-Yang-Mills fields}

\begin{theorem} Suppose that the radial curvature $K(r)$ of $M$ satisfies one of the seven growth conditions in \eqref{1.1} ${\operatorname(i)}-
{\operatorname(vii)}$, Theorem A , in which $d_F=1$.
Let $R^\nabla$ be an exponential Yang-Mills field with $||R^\nabla|| = $constant and  
\begin{equation}
\operatorname{Volume}\big (B_{\rho} (x_0)\big ) = o({\rho}^{\lambda} )\quad \text{as
} \rho\rightarrow \infty\, ,\label{7.1}
\end{equation}
where $\lambda $ is given by \eqref{1.4}, in which $d_F=1$. Then $R^\nabla  \equiv 0$.  In particular, every exponential Yang-Mills field $R^{\nabla}$  with constant $||R^\nabla||$ over manifold which has finite volume, $\operatorname{Volume}(M) < \infty$ vanishes.\label{T: 7.1}
\end{theorem}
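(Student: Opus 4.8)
The plan is to deduce Theorem \ref{T: 7.1} directly from the vanishing theorem for $F$-Yang-Mills fields (Theorem A), using the constancy of $||R^\nabla||$ to pass from the exponential functional $\mathcal{YM}_e$ to the ordinary Yang-Mills functional, whose degree is finite. Write $c_0 := ||R^\nabla||$, a constant by hypothesis, and recall that $M$ carries a pole $x_0$, implicit in the radial-curvature assumption. Since $||R^\nabla||$ is constant, Corollary \ref{C: 3.8} shows that the exponential Yang-Mills field $R^\nabla$ is also an $F$-Yang-Mills field for $F(t) = t$, that is, an ordinary Yang-Mills field.

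For $F(t) = t$ one has $F$ as in \eqref{1.1}, $\mathcal{YM}_F = \mathcal{YM}$, and
\[
d_F = \sup_{t \geq 0} \frac{t F'(t)}{F(t)} = \sup_{t > 0} \frac{t}{t} = 1,
\]
which is exactly the value $d_F = 1$ occurring in the curvature conditions \eqref{1.2}$(\mathrm{i})$--$(\mathrm{vii})$ and in the exponent \eqref{1.4} assumed in Theorem \ref{T: 7.1}. It then remains only to verify the $F$-Yang-Mills energy growth condition \eqref{1.3}. Since $F\big(\tfrac{1}{2}||R^\nabla||^2\big) = \tfrac{c_0^2}{2}$ is constant, hypothesis \eqref{7.1} gives
\[
\int_{B_\rho(x_0)} F\Big(\frac{||R^\nabla||^2}{2}\Big)\, dv = \frac{c_0^2}{2}\,\operatorname{Volume}\big(B_\rho(x_0)\big) = o(\rho^\lambda) \qquad \text{as } \rho \to \infty .
\]
Hence all hypotheses of Theorem A hold with $d_F = 1$, and Theorem A yields $R^\nabla \equiv 0$ on $M$.

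For the ``in particular'' assertion I would observe that, under each of the seven curvature regimes \eqref{1.2}$(\mathrm{i})$--$(\mathrm{vii})$ with $d_F = 1$, the corresponding exponent in \eqref{1.4} can be chosen strictly positive; for instance, in case $(\mathrm{i})$ the constraint $(n-1)\beta - 4\alpha \geq 0$ forces $n - 4\tfrac{\alpha}{\beta} \geq n - (n-1) = 1 > 0$, and the remaining six cases follow from similarly elementary estimates. Fixing such a $\lambda > 0$, finiteness of $\operatorname{Volume}(M)$ gives $\operatorname{Volume}\big(B_\rho(x_0)\big) \leq \operatorname{Volume}(M) = O(1) = o(\rho^\lambda)$, so \eqref{7.1} holds automatically and the first part applies.

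There is no analytic difficulty in this argument, since the monotonicity formula and its integration are already packaged inside Theorem A; the only care needed is bookkeeping. The crucial point -- and the reason the statement holds -- is that constancy of $||R^\nabla||$ lets one replace, via Corollary \ref{C: 3.8}, the exponential $F(t) = e^t$ (for which $d_F = \infty$, so that Theorem A is inapplicable and \eqref{1.3} would require $\lambda = -\infty$) by the linear $F(t) = t$ (for which $d_F = 1$). The most tedious step, though entirely routine, is checking that $\lambda$ can be taken positive under all seven curvature conditions.
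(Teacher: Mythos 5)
Your proof is correct and follows essentially the same route as the paper: invoke Corollary \ref{C: 3.8} to see that constancy of $||R^\nabla||$ makes $R^\nabla$ an ordinary Yang-Mills field, i.e.\ an $F$-Yang-Mills field with $F(t)=t$ and $d_F=1$, so that the growth condition \eqref{1.3} reduces to the volume growth condition \eqref{7.1} and Theorem A applies. Your additional check that $\lambda$ can be taken strictly positive in each of the seven curvature regimes, which justifies the finite-volume ``in particular'' clause, is a detail the paper leaves implicit and is a welcome bit of bookkeeping.
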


\begin{proof} By Corollary \ref{C: 3.8}, this exponential Yang-Mills filed $R^\nabla$ is a Yang-Mills field which is a special case of $F$-Yang-Mills field, where $F$ is the identity map. Thus the F-degree of the identity map $d_F = 1\, .$
Now we apply $F$-Yang-Mills Vanishing Theorem A in which $F(t) =t$,  $d_F=1$, the $F$-Yang-Mills functional $\mathcal {YM}_F$ growth condition, \eqref{1.3} is transformed to the volume of the base manifold growth condition, \eqref{7.1}, and the conclusion $R^\nabla \equiv 0$ follows. 
\end{proof}

\section{An average principle, isoparametric and sobelov inequalities}   

In this section, we state, interpret, and apply an average principe in a simple discrete version, then extend it to a dual (or continuous) version: \smallskip

\begin{proposition} [{An average principle of concavity $(\operatorname{resp.}\,  convexity, linearity )$}]

\noindent
Let $f$ be a concave function $($resp. convex function, linear function $)$ .  Then
\begin{equation}
\begin{aligned} f(\text{average}) &\ge \text{average}\, (f)\, ,\\
\big (\operatorname{resp.}\quad  f(\text{average}) &\le \text{average}\, (f)\, ,\\
 f(\text{average}) & = \text{average}\, (f)\, \quad \big ). 
\end{aligned}
\label{8.1}
\end{equation}\label{P: 8.1}
\end{proposition}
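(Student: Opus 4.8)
The plan is to prove the three assertions in \eqref{8.1} simultaneously by observing that linearity is the degenerate case of both concavity and convexity, and that the convex case follows from the concave case applied to $-f$. So the real content is the single inequality $f(\text{average}) \ge \text{average}(f)$ for $f$ concave, which is precisely Jensen's inequality. First I would fix notation: in the discrete version, let $x_1, \dots, x_k$ be points in the domain and $t_1, \dots, t_k \ge 0$ weights with $\sum_i t_i = 1$, so that $\text{average} = \sum_i t_i x_i$ and $\text{average}(f) = \sum_i t_i f(x_i)$; the claim is then $f\big(\sum_i t_i x_i\big) \ge \sum_i t_i f(x_i)$.

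The core step is an induction on $k$. For $k=1$ the statement is trivial, and for $k=2$ it is the defining inequality of concavity: $f(t_1 x_1 + t_2 x_2) \ge t_1 f(x_1) + t_2 f(x_2)$ whenever $t_1 + t_2 = 1$, $t_i \ge 0$. For the inductive step, assuming the result for $k-1$ weights, one writes $\sum_{i=1}^{k} t_i x_i = t_k x_k + (1 - t_k)\sum_{i=1}^{k-1} \frac{t_i}{1-t_k} x_i$ (handling the trivial case $t_k = 1$ separately), applies the $k=2$ case to split off the $x_k$ term, and then applies the inductive hypothesis to the remaining convex combination of $k-1$ points with normalized weights $\frac{t_i}{1-t_k}$. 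This yields the discrete inequality. The convex case follows by replacing $f$ with $-f$, and the linear case follows because a linear $f$ is simultaneously concave and convex, forcing equality.

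For the dual (continuous) version alluded to in the surrounding text, the plan is to pass from finite averages to integral averages by a standard limiting argument: given a probability measure $\mu$ on the domain and an integrable map, one approximates the integral $\int x\, d\mu$ by Riemann-type sums $\sum_i t_i x_i$, applies the discrete inequality, and passes to the limit using continuity of $f$ (and, where needed, a supporting-line characterization of concavity, $f(y) \le f(x_0) + \ell(y - x_0)$ for some affine $\ell$, evaluated at $x_0 = \int x\, d\mu$ and integrated against $\mu$). The main obstacle — really the only subtle point — is ensuring the hypotheses are strong enough for the continuous version: one needs the relevant average to lie in the domain of $f$ (automatic if the domain is convex and closed, or an interval) and one needs enough integrability/measurability so that $\text{average}(f) = \int f\, d\mu$ is well-defined; in the generality the paper uses these functionals, a quick appeal to the convexity of the domain and the continuity of $f$ disposes of this. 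Everything else is the routine induction above.
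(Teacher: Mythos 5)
Your proposal is correct, and it is worth noting that the paper itself offers no proof of Proposition \ref{P: 8.1}: the statement is presented as a guiding principle, illustrated by the AM--GM and isoperimetric examples in Section 8, and in the later applications (Theorems \ref{T: 9.1} and \ref{T: 10.1}) the continuous case is simply cited as Jensen's inequality from Morrey \cite[p.~21]{Mo}. Your argument supplies exactly what that citation covers: the finite induction on the number of points (with the $k=2$ case being the definition of concavity and the $t_k=1$ case handled separately), the reduction of the convex and linear cases to the concave one via $-f$, and the passage to integral averages by the supporting-line inequality $f(y)\le f(x_0)+\ell(y-x_0)$ evaluated at $x_0=\int x\,d\mu$ and integrated against $\mu$. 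One small remark: for the continuous version the supporting-line argument alone already gives the result directly (no approximation by Riemann sums is needed), so your limiting argument is redundant rather than wrong; and your caveats about the average lying in the (convex) domain and about integrability of $f$ are precisely the hypotheses implicit in the paper's informal statement, e.g.\ the requirement in \eqref{8.9} that the right-hand side be defined. So the proof is complete and, modulo the fact that the paper delegates it to the literature, it is the standard route the paper intends.
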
 

Applying \eqref{8.1}, where a convex function $f = \exp\, $ and ``average" is taken over two positive numbers with respect to the sum, yields
one of the simplest inequalities that has far-reaching impacts

\begin{equation}
\sqrt {a \cdot b}  = \overset  {``f(average)" \swarrow}  {\exp  (\frac {A+B}{2})} \overset {(\text{Average}\, \text{Principle})} \le  \overset  {``average(f)"\swarrow} {\frac {\exp A + \exp B}{2}} = \frac {a + b}{2}\, . 
\label{8.2}
\end{equation}
That is,

\begin{example}[G.M. $\le$  A.M.] The Geometric Mean is no greater than the Arithmetic Mean: 
\begin{equation} 
\begin{aligned}
\sqrt {a \cdot b} & \le  \frac {a + b}{2}\, , \quad \operatorname{for}\quad a, b > 0\\
 \operatorname{with}\, ``&="\,  \operatorname{holds}\, 
\operatorname{if}\,  \operatorname{and}\, \operatorname{only}\, \operatorname{if}\, a=b\, .\end{aligned}\label{8.3} \end{equation}
\end{example}

\noindent
Indeed, Let $a = \exp(A)$ and $b=\exp(B)\, .$ Then  applying An Average Principle of Convexity \eqref{8.1}, where $f = \exp\, $ yields \smallskip

\medskip

\noindent
{\bf A geometric interpretation of this inequality}:
 
Among all rectangles on the Euclidean plane with a given perimeter $\mathcal L$, the square has the largest area $\mathcal A$.\smallskip

By duality,  this means parallelly \smallskip

Among all rectangles on the Euclidean plane with a given area $\mathcal A$, the square has the least perimeter $\mathcal L$.

Indeed, \begin{equation} 16\, \mathcal A = 16\, a \cdot b \le  (2a +2b)^2\ = {\mathcal L}^2 \label{8..4}\end{equation}
Equality holds if and only if the rectangles are squares, i.e., $a=b$.\smallskip

A dual approach from discreteness to continuity yields\smallskip

\noindent{\bf A sharp isoperimetric inequlality for plane curves:} Among all simple closed smooth curves on the Euclidean plane with a given length $L$, the circle encloses the largest area $A\, .$

\begin{equation} 4\pi A \le L^2\label{8.5}\end{equation}
Equality holds if and only if the curve encloses a disk. 

This is equivalent to 
\smallskip

\noindent{\bf  The Sobolev inequality on $\mathbb {R}^2$ with optimal constant}: If $u \in W^{1,1}(\mathbb {R}^2)$, then
 
\begin{equation} 4 \pi \int _ {\mathbb {R}^2} |u|^{2}\, dx \le \bigg (\int _ {\mathbb {R}^2} |\nabla u|\, dx\bigg )^{2} .\label{8.6}\end{equation}\bigskip

Similarly, applying \eqref{8.1}, where $f = \exp\, $ and ``average" is averaging the sum of $n$ positive numbers, $n \ge 2$, yields

\begin{equation}
\root n\of{a_1 \cdot a_n} = \overset  {``f(average)" \swarrow}  {\exp  (n^{-1}\sum_{j=1}^nA_j)} \overset {(\text{Average}\, \text{Principle})} \le  \overset  {``average(f)"\swarrow}  {n^{-1}\sum_{j=1}^n\exp A_j} =n^{-1}\sum_{j=1}^na_j\, . 
\label{8.7}
\end{equation}
That is,

\begin{example}[The geometric mean of the numbers is no greater than the arithmetic mean of $n$ positive numbers]
\begin{equation} 
\begin{aligned} 
\root n \of{a_1 \cdot a_n}  &\le  \frac {a_1 + \cdots + a_n}{n}\, , \quad \text{for}\quad a_1, \cdots a_n > 0\, , \\
 \text{with}\, ``&="\,  \text{holds}\, 
\text{if}\,  \text{and}\, \text{only}\, \text{if}\, a_1= \cdots = a_n\, .\end{aligned} 
\label{8.8}
\end{equation}
\end{example}
For a dual version, let a concave function $f=\log$, An Average Principle, Proposition {8.1} yields 
\begin{example} 
Let $g$ be a nonnegative measurable function on $[0,1]$. Then 
\begin{equation} \log \int _0^1  g(t)\, dt \ge \int _0^1 \log \big ( g(t) \big )\, dt
\label{8.9}
\end{equation}
whenever the right side is defined.
\end{example}

Isoperimetric and Sobolev inequlalities can be generalized to higher dimensional Euclidean spaces.  As in dimension two, 
the $n$-dimensional sharp isoperimetric inequality is equivalent (for sufficiently smooth domains) to :\smallskip

\noindent
{\bf The Sobolev inequality on $\mathbb {R}^n$ with optimal constant}

\noindent
If $u \in W^{1,1}(\mathbb {R}^n)$ and $\omega_n$ is the volume of the unit ball in $\mathbb {R}^n\, $, then
 \begin{equation}
\bigg (\int _ {\mathbb {R}^n} |u|^{\frac {n}{n-1}}\, dx\bigg )^{\frac {n-1}{n}}\le  \frac {1}{n} \frac {1}{\sqrt[n]{\omega_n}}\int _ {\mathbb {R}^n} |\nabla u|\, dx.\label{8.10}\end{equation}

Isoperimetric and Sobolev inequlalities are extended to Riemannian manifolds $M$ with sharp constants 
and applications to optimal sphere theorems (cf., e.g., Wei-Zhu \cite {WZ}).

\begin{theorem}[ A sharp isoperimetric inequality \cite {Du, WZ}] For
{\bf every} domain $\Omega\, ($in $M)$, there exists a constant
$C(M)$ depending on $M$ 
such that 
\begin{equation} P^n \ge n^n \omega _n
V^{n-1}(1 - C(M) V^{\frac 2n}), \label{8.11} \end{equation} 
where
$P=vol(\partial \Omega), V=vol(\Omega)\, ,$ and  $\omega _n$ is the
volume of the unit ball in ${R}^n$. 

\noindent
Furthermore, on simply connected Riemannian manifolds of dimension $n$ with Ricci curvature bounded
from below by $n - 1$, the best $C(M)$ one can take in the above inequality \eqref{8.10} is greater than
or equal to \begin{equation}C_0 = \frac
{n(n-1)}{2(n+2) \omega_n^{\frac 2n}}\, . \label{8.12} \end{equation}
\label{T: 8.4}\end{theorem}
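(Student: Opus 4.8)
The plan is to isolate the content of the statement in the small-volume regime and dispose of everything else by a soft choice of the constant. First I would note that whenever $C(M)V^{2/n}\ge 1$ the right-hand side $n^n\omega_n V^{n-1}\bigl(1-C(M)V^{2/n}\bigr)$ is non-positive, whereas $P^n\ge 0$ trivially; hence the inequality is automatic once $V\ge C(M)^{-n/2}$. So it suffices to produce constants $c_0(M)>0$ and $V_0>0$ with
\[
P^n\ \ge\ n^n\omega_n V^{n-1}\bigl(1-c_0(M)V^{2/n}\bigr)\qquad\text{for every domain }\Omega\text{ with }V=\mathrm{vol}(\Omega)\le V_0,
\]
since then $C(M):=\max\{c_0(M),\,V_0^{-2/n}\}$ works for every domain. (We may assume $M$ compact, or of bounded geometry, as the constant is allowed to depend on $M$.)

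For the small-volume estimate I would pass to the isoperimetric profile $I_M(V)=\inf\{\mathrm{vol}(\partial\Omega):\mathrm{vol}(\Omega)=V\}$, so that it suffices to bound $I_M(V)^n$ from below, and then carry out the standard blow-up analysis. A perimeter-minimizing region of small volume $V$ exists and, by the regularity theory for volume-constrained minimizers, has smooth boundary of controlled mean curvature and concentrates near a point $p\in M$ as $V\to 0$; after dilating the metric by $V^{-2/n}$ the rescaled regions converge to a unit Euclidean ball, because small geodesic balls of $M$ are $C^2$-close to Euclidean balls with an error governed by the curvature at $p$. Expanding the volume and the boundary area of a geodesic ball of radius $\rho$ about $p$ to second order in $\rho$ --- the leading correction being the usual $\mathrm{Scal}(p)$ term --- and comparing with the Euclidean model yields precisely an estimate $P^n\ge n^n\omega_n V^{n-1}\bigl(1-c_0(M)V^{2/n}\bigr)$ with $c_0(M)$ controlled by $\sup_M|\mathrm{Scal}|/\bigl(2(n+2)\omega_n^{2/n}\bigr)$. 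This is the genuinely hard step and the one where \cite{Du,WZ} do the real work: one must justify the convergence of the rescaled minimizers and control all error terms uniformly in the small parameter $V$.

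For the sharp-constant assertion, suppose $M$ is simply connected of dimension $n$ with $\mathrm{Ric}\ge n-1$; then $M$ is compact by Bonnet--Myers, and the sharp isoperimetric comparison of Lévy--Gromov for $\mathrm{Ric}\ge n-1$ reduces the problem to the corresponding inequality on the round unit sphere $S^n$, whose isoperimetric regions are geodesic caps. For a cap of geodesic radius $\rho$ in $S^n$ one has $P=n\omega_n\sin^{n-1}\rho$ and $V=n\omega_n\int_0^\rho\sin^{n-1}t\,dt$; a direct Taylor expansion gives $V=\omega_n\rho^n\bigl(1-\tfrac{n(n-1)}{6(n+2)}\rho^2+o(\rho^2)\bigr)$, hence $\rho^2=\omega_n^{-2/n}V^{2/n}+o(V^{2/n})$, and then $P^n=n^n\omega_n V^{n-1}\bigl(1-\tfrac{n(n-1)}{2(n+2)}\rho^2+o(\rho^2)\bigr)=n^n\omega_n V^{n-1}\bigl(1-C_0V^{2/n}+o(V^{2/n})\bigr)$ with $C_0=\tfrac{n(n-1)}{2(n+2)\omega_n^{2/n}}$. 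A one-variable monotonicity lemma for the function $\rho\mapsto P(\rho)^n/\bigl(n^n\omega_n V(\rho)^{n-1}\bigr)$ on $S^n$ upgrades this asymptotic identity to the exact inequality $P^n\ge n^n\omega_n V^{n-1}(1-C_0V^{2/n})$ valid for all $V$; together with the Lévy--Gromov reduction this shows $C(M)=C_0$ is admissible for every such $M$, while the asymptotic equality realized on $S^n$ itself (which has $\mathrm{Ric}=n-1$) shows no smaller constant can work, so $C_0$ is optimal.
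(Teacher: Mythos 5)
The paper itself offers no proof of Theorem \ref{T: 8.4}; it is quoted from Druet \cite{Du} and Wei--Zhu \cite{WZ}, so your proposal can only be judged on its own terms. Your treatment of the first assertion (dispose of large volumes by enlarging $C(M)$, then do the small-volume blow-up/expansion analysis of isoperimetric minimizers) is the right strategy, and you correctly flag that the uniform control of the rescaled minimizers is the substantive work done in \cite{Du, WZ}.

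The ``furthermore'' part, however, is argued in the wrong direction. The claim is a lower bound on the best (i.e.\ smallest admissible) constant: for each simply connected $M$ with $\mathrm{Ric}\ge n-1$, no constant smaller than $C_0$ can be used in \eqref{8.11}. The natural proof tests the inequality on small geodesic balls $B_\rho(p)$: the expansions $\mathrm{vol}(B_\rho(p))=\omega_n\rho^n\bigl(1-\tfrac{\mathrm{Scal}(p)}{6(n+2)}\rho^2+o(\rho^2)\bigr)$ and $\mathrm{vol}(\partial B_\rho(p))=n\omega_n\rho^{n-1}\bigl(1-\tfrac{\mathrm{Scal}(p)}{6n}\rho^2+o(\rho^2)\bigr)$ yield $P^n=n^n\omega_nV^{n-1}\bigl(1-\tfrac{\mathrm{Scal}(p)}{2(n+2)\omega_n^{2/n}}V^{2/n}+o(V^{2/n})\bigr)$, so any admissible $C(M)$ must satisfy $C(M)\ge \mathrm{Scal}(p)/\bigl(2(n+2)\omega_n^{2/n}\bigr)$ for every $p$, and $\mathrm{Ric}\ge n-1$ forces $\mathrm{Scal}\ge n(n-1)$, whence $C(M)\ge C_0$. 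You instead set out to prove that $C_0$ is admissible on every such $M$ (via L\'evy--Gromov plus an unproved one-variable monotonicity lemma) and that no smaller constant works on $S^n$. The second point establishes the theorem only for $M=S^n$, not for an arbitrary $M$ in the class; the first point is not what the theorem asserts and is in fact false in general: if $\mathrm{Scal}(p_0)>n(n-1)$ at some point (e.g.\ suitable products of round spheres, which are simply connected with $\mathrm{Ric}\ge n-1$ but nonminimal scalar curvature), the same ball expansion shows $C_0$ is not admissible for that $M$. Moreover, L\'evy--Gromov compares volume-normalized profiles; converting it to the unnormalized inequality \eqref{8.11} introduces the factor $\mathrm{vol}(M)/\mathrm{vol}(S^n)\le 1$ (Bishop--Gromov) in the unfavorable direction, so even that reduction does not go through as stated. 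Ironically, your spherical-cap computation is exactly the calculation needed; it just has to be run for small geodesic balls about an arbitrary point of an arbitrary $M$ in the class, followed by the pointwise bound $\mathrm{Scal}\ge n(n-1)$.
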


It is then by a standard technique, via coarea formula and
Cavalieri's principle, that \eqref{8.11} is equivalent to the
following:

\begin{theorem}[A sharp Sobolev inequality \cite{WZ}]   There exists a
constant $A=A(M)$ such that $\forall \varphi \in W^{1,1}(M),$ \begin{equation}
(\int _M|\varphi|^{\frac{n}{n-1}} dv)^{\frac {n-1}{n}} \le K(n,1)
(\int _M|\nabla \varphi| dv) +A(M)(\int _M|\varphi|^{\frac n{n+1}}
dv)^{\frac {n+1}{n}},\ \ \ \ \ \ \ \label{8.13}\end{equation}
 where $$K(n,1)=\lim_{p\to +1}K(n,p)=\frac 1{n \omega_n^{\frac 1n}}\, .$$
\end{theorem}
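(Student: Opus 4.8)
The plan is to deduce \eqref{8.13} from the sharp isoperimetric inequality \eqref{8.11} of Theorem \ref{T: 8.4} by the Federer--Fleming slicing argument; the reverse implication, and hence the claimed equivalence, follows by testing \eqref{8.13} against Lipschitz functions $\varphi_\varepsilon$ approximating $\mathbf 1_\Omega$ for a smooth bounded domain $\Omega$ and letting $\varepsilon\to 0$. By density of $C_c^\infty(M)$ in $W^{1,1}(M)$, and since $\big|\nabla|\varphi|\big|=|\nabla\varphi|$ a.e., it suffices to prove \eqref{8.13} for a fixed nonnegative $\varphi\in C_c^\infty(M)$. First I would apply the co-area formula
\[
\int_M|\nabla\varphi|\,dv=\int_0^\infty \mathrm{vol}_{n-1}\big(\{\varphi=t\}\big)\,dt,
\]
and note that, by Sard's theorem, for a.e.\ $t$ the level set $\{\varphi=t\}$ is a smooth hypersurface equal to $\partial\Omega_t$, where $\Omega_t:=\{\varphi>t\}$, so that $\mathrm{vol}_{n-1}(\{\varphi=t\})=\mathrm{vol}(\partial\Omega_t)=:P(\Omega_t)$.

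Writing $V_t:=\mathrm{vol}(\Omega_t)$, I would then apply \eqref{8.11} to each $\Omega_t$ and simplify: from $P(\Omega_t)^n\ge n^n\omega_n V_t^{\,n-1}\big(1-C(M)V_t^{2/n}\big)$ together with $(1-x)^{1/n}\ge 1-x$ for $x\in[0,1]$ (and trivially $P(\Omega_t)\ge 0$ when $C(M)V_t^{2/n}>1$, in which case the bracket is already $\le 0$), one obtains, for \emph{every} $t$,
\[
P(\Omega_t)\ \ge\ n\,\omega_n^{1/n}\Big(V_t^{\,\frac{n-1}{n}}-C(M)\,V_t^{\,\frac{n+1}{n}}\Big),
\]
using $\tfrac{n-1}{n}+\tfrac 2n=\tfrac{n+1}{n}$. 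Integrating over $t\in(0,\infty)$ and applying Minkowski's integral inequality (Cavalieri's principle) to the layer-cake representation $\varphi=\int_0^\infty\mathbf 1_{\Omega_t}\,dt$ yields, with $q=\tfrac{n}{n-1}\ge 1$,
\[
\Big(\int_M\varphi^{\,q}\,dv\Big)^{1/q}=\Big\|\int_0^\infty\mathbf 1_{\Omega_t}\,dt\Big\|_q\le\int_0^\infty\|\mathbf 1_{\Omega_t}\|_q\,dt=\int_0^\infty V_t^{\,\frac{n-1}{n}}\,dt\le\frac{1}{n\,\omega_n^{1/n}}\int_M|\nabla\varphi|\,dv+C(M)\int_0^\infty V_t^{\,\frac{n+1}{n}}\,dt.
\]

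It remains to absorb the error integral $\int_0^\infty V_t^{(n+1)/n}\,dt$ into a lower-order term. Using $\Omega_t=\{\varphi^{\,n/(n+1)}>t^{\,n/(n+1)}\}$, a change of variable reduces this to a one-dimensional estimate for the non-increasing function $t\mapsto V_t$, which I would bound by $\big(\int_M\varphi^{\,n/(n+1)}\,dv\big)^{(n+1)/n}$ (an equality on indicator functions; in general it follows from monotonicity of $V_t$ and a Hardy-type inequality). Combining the last two displays gives \eqref{8.13} with $K(n,1)=\tfrac{1}{n\,\omega_n^{1/n}}$ and $A(M)=C(M)$, so $A(M)$ inherits the explicit bound $C_0$ of \eqref{8.12}; moreover $K(n,1)$ is sharp because $n^n\omega_n$ is the sharp constant in \eqref{8.11}, as one checks by testing \eqref{8.13} on radially symmetric bumps supported in geodesic balls $B_\rho(x_0)$ with $\rho\to 0$, where the leading-order behaviour is Euclidean. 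I expect the main technical obstacle to be precisely this error-term bound: for the exponent $(n+1)/n>1$ the naive layer-cake/triangle inequality runs in the wrong direction, so one must exploit the monotonicity of the distribution function rather than argue term by term.
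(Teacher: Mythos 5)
Your proposal is correct and follows essentially the route the paper itself indicates: the paper derives \eqref{8.13} from the sharp isoperimetric inequality \eqref{8.11} ``by a standard technique, via coarea formula and Cavalieri's principle'' (the Federer--Fleming argument), citing \cite{WZ}, which is exactly your coarea/layer-cake scheme, including the absorption of the volume remainder into the $\big(\int_M|\varphi|^{n/(n+1)}dv\big)^{(n+1)/n}$ term. The error-term lemma you flag is indeed true for the non-increasing distribution function $t\mapsto V_t$ (it follows from the same layer-cake plus Minkowski integral inequality device), so no essential gap remains.
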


This isoperimetric inequality \eqref{8.11} certainly  has its roots
in global analysis and partial differential equations (see, e.g.,
\cite{AuL}). Furthermore, the optimal constants in \eqref{8.11} will
have some geometric and even topological applications. An
immediate example is that sharp estimate on $C(M)$ recaptures 
\begin{theorem}[Bernstein isoperimetric
inequality \cite{Ber}]  On the $2$-sphere $S^2\, ,$
\begin{equation} 
\begin{aligned}
L^2 & \ge 4 \pi A
(1- \frac 1{4 \pi} A)\\
&  ``="\,  \text{holds}  \\
& \text{if}\,  \text{and}\, \text{only}\, \text{if}\, 
\text{the}\,  \text{domain}\,  \text{in}\, \text{question}\,  \text{is}\,  \text{a}\,  \text{disk}. \end{aligned}\label{8.14} 
\end{equation}
\end{theorem}

\medskip

\begin{remark} For a generalization of isoperimetric inequality to  $n$-dimensional integer multiplicity rectifiable current in $\mathbb R^{n+k}$, which follows from the deformation theorem in geometric measure theory, 
we refer to Federer and Fleming (\cite {FF}).

\end{remark}

\section{Convexity and Jensen's inequalities}
 
We note by Proposition \ref{P: 8.1}, every convex function $f$ enjoys an Average Principle of Convexity and Jensen's inequality in an average sense.
From the duality between discreteness and continuity, we consider Jensen's inequality involving normalized exponential Yang-Mills energy functional $\mathcal {YM}_e ^0$.
\smallskip

Let $M$ be a compact manifold and $E$ be a vector bundle over $M\, .$ Denote  $\mathcal L_1^p(E)$ the Sobolev space of  connections of $E$ which are $p$-integrable and so are their first derivatives.
Set \begin{equation}\mathcal W (E) = \bigcap _{p \ge 1} \mathcal L_1 ^p (E) \cap \{\nabla: \mathcal {YM}_e ^0 (\nabla) < \infty\}\, . \label{9.1}\end{equation}

\begin{theorem} (Jensen's inequality involving normalized exponential Yang-Mills energy functional $\mathcal {YM}_e ^0$)  Let $\nabla$ be a connection in $\mathcal{W} (E)$. 
Then $\big ($applying \eqref{8.1} yields$\big )$
\begin{equation}
\exp \bigg ( \frac {1}{\text{Volume}(M)} \int _M \frac {||R^\nabla||^2}{2}\, dv \bigg) - 1 \le \frac {1}{\text{Volume}(M)} \int _M \big (\exp (\frac {||R^\nabla||^2}{2})- 1\big )\, dv\, .\label{9.2}\end{equation}
\begin{equation} \quad \text{That}\quad \text{is},\quad  \exp \bigg ( \frac {1}{\text{Volume}(M)} \mathcal {YM}(\nabla)\bigg) - 1 \le \frac {1}{\text{Volume}(M)} \mathcal {YM}_e ^0(\nabla)\, .\label{9.3}\end{equation}
Equality is valid if and only if $||R^\nabla||$ is constant almost everywhere.\label{T: 9.1}
\end{theorem}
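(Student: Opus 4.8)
The plan is to recognize \eqref{9.2} as Jensen's inequality for the strictly convex function $\phi(t)=e^{t}$ on $\mathbb R$, applied to the probability measure $d\mu:=\text{Volume}(M)^{-1}\,dv$ on the compact manifold $M$ and to the nonnegative measurable function $\tfrac12\|R^{\nabla}\|^{2}$. This is exactly the continuous (dual) form of the Average Principle of Convexity, Proposition \ref{P: 8.1}, with $f=\exp$: ``$f(\text{average})\le\text{average}(f)$''.

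First I would settle integrability, so that every quantity in sight is finite and $d\mu$ is a genuine probability measure. Since $\nabla\in\mathcal W(E)$ we have $\mathcal{YM}_e^{0}(\nabla)=\int_{M}\big(\exp(\tfrac12\|R^{\nabla}\|^{2})-1\big)\,dv<\infty$, and compactness of $M$ then gives $\int_{M}\exp(\tfrac12\|R^{\nabla}\|^{2})\,dv<\infty$; moreover the elementary bound $e^{t}-1\ge t$ for $t\ge0$ forces $\int_{M}\tfrac12\|R^{\nabla}\|^{2}\,dv<\infty$, so the argument of $\exp$ on the left of \eqref{9.2} is well defined.

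Next I would invoke Jensen's inequality for $\phi=\exp$,
\[
\exp\Big(\int_{M}\tfrac12\|R^{\nabla}\|^{2}\,d\mu\Big)\le\int_{M}\exp\big(\tfrac12\|R^{\nabla}\|^{2}\big)\,d\mu ,
\]
then unwind $d\mu$ and subtract $1=\int_{M}1\,d\mu$ from both sides; the right-hand side becomes $\text{Volume}(M)^{-1}\int_{M}\big(\exp(\tfrac12\|R^{\nabla}\|^{2})-1\big)\,dv$, which is \eqref{9.2}, while \eqref{9.3} follows by rewriting the two sides through the definitions \eqref{3.2} of $\mathcal{YM}$ and \eqref{3.5} of $\mathcal{YM}_e^{0}$. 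For a self-contained argument that avoids citing Jensen, I would instead set $c:=\int_{M}\tfrac12\|R^{\nabla}\|^{2}\,d\mu$, use the supporting-line estimate $e^{t}\ge e^{c}+e^{c}(t-c)$ valid for all $t\in\mathbb R$ by convexity, substitute $t=\tfrac12\|R^{\nabla}\|^{2}$, and integrate against $d\mu$; the linear term integrates to $0$ and \eqref{9.2} drops out.

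Finally, for the equality statement I would use strict convexity of $\exp$: equality holds in the supporting-line estimate at a point precisely when $\tfrac12\|R^{\nabla}\|^{2}=c$ there, so after integration equality in \eqref{9.2} forces $\tfrac12\|R^{\nabla}\|^{2}=c$, i.e. $\|R^{\nabla}\|$ constant, $\mu$-a.e. on $M$; conversely that condition obviously gives equality. There is no genuine obstacle in this proof — the only points demanding care are the integrability reductions and a clean treatment of the equality case; everything else is the standard Jensen/supporting-line computation.
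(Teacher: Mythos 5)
Your proposal is correct and follows essentially the same route as the paper, whose entire proof is a citation of Jensen's inequality for the convex function $e^{t}-1$ (Morrey, p.~21); applying Jensen for $\exp$ with the normalized measure $\operatorname{Volume}(M)^{-1}\,dv$ and then subtracting $1$ is the same argument. Your added integrability check, the supporting-line derivation, and the strict-convexity treatment of the equality case simply supply details the paper leaves to the reference.
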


\begin{proof}
This is a form of Jensen's inequality for the convex function $e^t - 1$(c.f. \cite [p.21] {Mo}). 
\end{proof}

\begin{theorem}
Let $\nabla$ be a minimizer in $\mathcal{W} (E)$ of the Yang-Mills functional $\mathcal {MY}$, and the norm $||R^\nabla||$ be constant almost everywhere. 
Then the same connection $\nabla$ is a minimizer of the normalized exponential Yang-Mills functional $\mathcal {YM}_e ^0\, ,$ and for any minimizer $\tilde {\nabla}$ of the normalized exponential Yang-Mills functional $\mathcal {YM}_e ^0\, $ in $\mathcal{W} (E)$,
the norm $||R^{\tilde {\nabla}}||$ is almost everywhere constant. \label{T: 9.2}
\end{theorem}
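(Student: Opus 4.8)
\textbf{Proof proposal for Theorem \ref{T: 9.2}.}

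The plan is to exploit two facts: first, the pointwise inequality from Theorem \ref{T: 9.1}, $\exp\big(\tfrac12\|R^\nabla\|^2\big)-1 \ge (\exp\circ\tfrac12)$ of the average, applied with its equality characterization; and second, the elementary observation that $t \mapsto e^{t}-1$ is strictly convex, so Jensen's inequality in the form \eqref{9.2}-\eqref{9.3} is strict unless $\|R^\nabla\|^2$ is a.e.\ constant. I would set, for any connection $\theta \in \mathcal W(E)$, $V = \text{Volume}(M)$ and write
\begin{equation}
\mathcal{YM}_e^0(\theta) \;\ge\; V\Big(\exp\big(\tfrac{1}{V}\mathcal{YM}(\theta)\big) - 1\Big),
\label{9.4}
\end{equation}
with equality iff $\|R^\theta\|$ is a.e.\ constant, this being exactly Theorem \ref{T: 9.1}. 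The right-hand side of \eqref{9.4} is a strictly increasing function of $\mathcal{YM}(\theta)$, which is the lever that transfers minimality of $\mathcal{YM}$ to minimality of $\mathcal{YM}_e^0$.

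For the first assertion, let $\nabla$ be a minimizer of $\mathcal{YM}$ in $\mathcal W(E)$ with $\|R^\nabla\|$ a.e.\ constant, so that \eqref{9.4} holds with equality at $\nabla$: $\mathcal{YM}_e^0(\nabla) = V\big(\exp(\tfrac1V\mathcal{YM}(\nabla))-1\big)$. For an arbitrary competitor $\theta\in\mathcal W(E)$, combine \eqref{9.4} with $\mathcal{YM}(\theta)\ge\mathcal{YM}(\nabla)$ and the monotonicity of $s\mapsto e^{s/V}-1$ to get
\begin{equation}
\mathcal{YM}_e^0(\theta) \;\ge\; V\Big(\exp\big(\tfrac1V\mathcal{YM}(\theta)\big)-1\Big) \;\ge\; V\Big(\exp\big(\tfrac1V\mathcal{YM}(\nabla)\big)-1\Big) \;=\; \mathcal{YM}_e^0(\nabla),
\label{9.5}
\end{equation}
so $\nabla$ minimizes $\mathcal{YM}_e^0$. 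For the second assertion, let $\tilde\nabla$ be any minimizer of $\mathcal{YM}_e^0$ in $\mathcal W(E)$. Then $\mathcal{YM}_e^0(\tilde\nabla) \le \mathcal{YM}_e^0(\nabla)$, while \eqref{9.4} applied at $\tilde\nabla$ together with $\mathcal{YM}(\tilde\nabla)\ge\mathcal{YM}(\nabla)$ and monotonicity gives $\mathcal{YM}_e^0(\tilde\nabla)\ge V\big(\exp(\tfrac1V\mathcal{YM}(\tilde\nabla))-1\big)\ge V\big(\exp(\tfrac1V\mathcal{YM}(\nabla))-1\big)=\mathcal{YM}_e^0(\nabla)$. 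Hence both displayed inequalities collapse to equalities; equality in the first (i.e.\ in \eqref{9.4} at $\tilde\nabla$) forces $\|R^{\tilde\nabla}\|$ to be a.e.\ constant by the equality case of Theorem \ref{T: 9.1}, which is the claim.

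The only delicate point is ensuring the chain \eqref{9.5} is legitimate when $\mathcal{YM}(\nabla)$ could in principle fail to be realized as an honest minimum over $\mathcal W(E)$; but since the hypothesis states $\nabla$ \emph{is} a minimizer in $\mathcal W(E)$, this is given, and the rest is bookkeeping with the strictly increasing bijection $s\mapsto V(e^{s/V}-1)$. A secondary point worth a line is that $\mathcal W(E)$ is the common domain on which both functionals are finite (by \eqref{9.1}), so all quantities appearing are finite and the manipulations are valid; no separate regularity or compactness input is needed beyond Theorem \ref{T: 9.1}. I expect the main obstacle to be purely expository—stating crisply that the transfer works because Jensen forces the \emph{same} equality condition (constancy of $\|R^\nabla\|$) at both ends—rather than any genuine analytic difficulty.
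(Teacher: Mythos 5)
Your proposal is correct and follows essentially the same route as the paper: Jensen's inequality of Theorem \ref{T: 9.1} combined with the minimality of $\nabla$ for $\mathcal{YM}$ and the monotonicity of $t\mapsto e^{t}-1$ transfers minimality to $\mathcal{YM}_e^0$, and forcing the chain of inequalities to equalities at a minimizer $\tilde\nabla$ of $\mathcal{YM}_e^0$ invokes the equality case of Theorem \ref{T: 9.1} to conclude $\|R^{\tilde\nabla}\|$ is a.e.\ constant. The only cosmetic difference is that you cite the equality case of Jensen at $\nabla$, where the paper instead computes $\mathcal{YM}_e^0(\nabla)$ explicitly from the constancy of $\|R^\nabla\|$ (its \eqref{9.6}), which is the same fact.
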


\begin{proof} By the definition of minimizer  $\nabla$, the monotone of $t \mapsto e^t -1\, ,$ and Jensen's inequality \eqref{9.3}, we have for each $\tilde {\nabla}$ in $\mathcal{W} (E)$, 
\begin{equation} 
\aligned
\exp \bigg ( \frac {1}{\text{Volume}(M)} \mathcal {YM}(\nabla)\bigg) - 1 & \le  \exp \bigg ( \frac {1}{\text{Volume}(M)} \mathcal {YM}(\tilde {\nabla})\bigg) - 1\\
& \le \frac {1}{\text{Volume}(M)} \mathcal {YM}_e ^0(\tilde {\nabla})\, . \endaligned \label{9.4}
\end{equation}
so that 
\begin{equation} 
\exp \bigg ( \frac {1}{\text{Volume}(M)} \mathcal {YM}(\nabla)\bigg) - 1 \le \inf _{\tilde {\nabla} \in \mathcal{W} (E)}\frac {1}{\text{Volume}(M)} {\mathcal {YM}}_e ^0(\tilde {\nabla})\, . \label{9.5}
\end{equation}
On the other hand, since $||R^{\nabla}|| =$ constant a.e.,
\begin{equation} 
\frac {1}{\text{Volume}(M)} \mathcal {YM}_e ^0({\nabla}) = \exp (\frac {||R^\nabla||^2}{2})- 1 = \exp \bigg ( \frac {1}{\text{Volume}(M)} \mathcal {YM}(\nabla)\bigg) - 1\label{9.6}
\end{equation}
so that $\nabla$ is also a minimizer of the normalized exponential Yang-Mills functional $\mathcal {YM}_e ^0\, .$

Now we assume that $\tilde {\nabla}$ is any minimizer of the normalized exponential Yang-Mills functional $\mathcal {YM}_e ^0\, $ in $\mathcal{W} (E)$. Then 
\begin{equation} 
\frac {1}{\text{Volume}(M)} {\mathcal {YM}}_e ^0(\tilde {\nabla}) \le \frac {1}{\text{Volume}(M)} \mathcal {YM}_e ^0({\nabla})\label{9.7}
\end{equation}
and combining \eqref{9.7}, \eqref{9.6} and \eqref{9.4}, 
allows us to improve  
all inequalities in \eqref{9.4}
to equalities, so that we are ready to apply Theorem \ref{T: 9.1} and conclude that  
$||R^{\tilde {\nabla}}||$ is constant almost everywhere.

\end{proof}

\section{$p$-Yang-Mills fields}

Similarly, we set $$ \mathcal W ^p (E) = \mathcal L_1 ^p (E) \cap  \mathcal L_1 ^2 (E)\, , p \ge 2$$ and obtain via \eqref{8.1}                                                                                                                                                                                                                                                                                                                                                                                                                  \smallskip

\begin{theorem}[Jensen's inequality involving $p$-Yang-Mills energy functional $\mathcal {YM}_p,\ p \ge 2$]  Let $\nabla$ be a connection in $ \mathcal W ^p (E)$.
Then
\begin{equation}
\frac 1p \bigg ( \frac {2}{\text{Volume}(M)} \int _M \frac {||R^\nabla||^2}{2}\, dv \bigg)^{\frac p2} \le \frac {1}{\text{Volume}(M)} \int _M  (\frac {||R^\nabla||^p}{p})\, dv\, .\label{10.1}\end{equation}
\begin{equation} \quad \text{That}\quad \text{is},\quad  \frac 1p \bigg ( \frac {2}{\text{Volume}(M)} \mathcal {YM}(\nabla) \bigg )^{\frac p2}\le \frac {1}{\text{Volume}(M)} \mathcal {YM}_p(\nabla)\, .\label{10.2}\end{equation}
Equality is valid if and only if $||R^\nabla||$ is constant almost everywhere.\label{T: 10.1}
\end{theorem}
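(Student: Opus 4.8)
The plan is to read \eqref{10.1} as the integral (``continuous'') form of the Average Principle of convexity, Proposition \ref{P: 8.1} --- that is, as Jensen's inequality --- in exact parallel with the proof of Theorem \ref{T: 9.1}, but with the convex function $e^t-1$ replaced by $f(t)=p^{-1}(2t)^{p/2}$ on $[0,\infty)$. The first step is to record that $f$ is convex precisely when $p\ge 2$: writing $f(t)=\tfrac{2^{p/2}}{p}\,t^{p/2}$ one has $f''(t)=2^{p/2-1}\big(\tfrac p2-1\big)\,t^{p/2-2}\ge 0$ on $(0,\infty)$, strictly so for $p>2$ (for $1<p<2$ the function is concave and the inequality would reverse, so $p\ge 2$ is exactly the admissible range). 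Note that $\mathcal{YM}_p$ in \eqref{3.3} is precisely $\mathcal{YM}_F$ with this $F=f$, consistent with the identification of $p$-Yang-Mills fields as $F$-Yang-Mills fields recalled in the Introduction.

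Next I would apply Jensen's inequality on the probability space $\big(M,\ dv/\text{Volume}(M)\big)$ to the nonnegative measurable function $s(x)=\tfrac12\|R^\nabla\|^2(x)$:
\[
f\!\left(\frac{1}{\text{Volume}(M)}\int_M \frac{\|R^\nabla\|^2}{2}\,dv\right)\ \le\ \frac{1}{\text{Volume}(M)}\int_M f\!\left(\frac{\|R^\nabla\|^2}{2}\right)\,dv\, .
\]
Evaluating $f$ at the average on the left produces $\tfrac1p\big(\tfrac{2}{\text{Volume}(M)}\int_M\tfrac{\|R^\nabla\|^2}{2}\,dv\big)^{p/2}$, while on the right the integrand is $\tfrac1p\big(2\cdot\tfrac{\|R^\nabla\|^2}{2}\big)^{p/2}=\tfrac{\|R^\nabla\|^p}{p}$. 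This is exactly \eqref{10.1}, and since $\mathcal{YM}(\nabla)=\int_M\tfrac12\|R^\nabla\|^2\,dv$ and $\mathcal{YM}_p(\nabla)=\int_M\tfrac{\|R^\nabla\|^p}{p}\,dv$ by \eqref{3.2}--\eqref{3.3}, rewriting gives \eqref{10.2}.

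For the equality case I would invoke the equality clause of Jensen's inequality: when $p>2$ the function $f$ is strictly convex on $(0,\infty)$, so equality in \eqref{10.1} forces $\tfrac12\|R^\nabla\|^2$, hence $\|R^\nabla\|$, to be constant almost everywhere; conversely, if $\|R^\nabla\|$ is a.e.\ constant both sides of \eqref{10.1} collapse to $p^{-1}\|R^\nabla\|^p$. (The endpoint $p=2$ is degenerate: there $f$ is linear and $\mathcal{YM}_2=\mathcal{YM}$, so \eqref{10.1} is simply an identity.)

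The only point needing care --- and where a hasty argument could slip --- is integrability, which is exactly what the hypothesis $\nabla\in\mathcal W^p(E)=\mathcal L_1^p(E)\cap\mathcal L_1^2(E)$ supplies: it guarantees $\tfrac12\|R^\nabla\|^2\in L^1(M)$ (so the left side is finite) and $f\big(\tfrac12\|R^\nabla\|^2\big)=p^{-1}\|R^\nabla\|^p\in L^1(M)$, while compactness of $M$ makes $\big(M,\ dv/\text{Volume}(M)\big)$ a genuine probability space so that Jensen applies. Once these are in hand the proof is formal, with no deeper obstacle --- just as Theorem \ref{T: 9.1} became formal once the convexity of $e^t-1$ was noted.
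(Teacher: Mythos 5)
Your proposal is correct and follows essentially the same route as the paper, whose proof simply cites Jensen's inequality for the convex function $t \mapsto \frac 1p (2t)^{\frac p2}$, $p \ge 2$; you merely spell out the convexity check, the probability measure $dv/\mathrm{Volume}(M)$, the integrability supplied by $\mathcal W^p(E)$, and the equality case (correctly noting the degenerate endpoint $p=2$, where the function is linear and \eqref{10.1} reduces to an identity). No further comment is needed.
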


\begin{proof}
This is a form of Jensen's inequality for the convex function $t \mapsto \frac 1p (2t)^{\frac p2}\, , p \ge 2$ (c.f. \cite [p.21] {Mo}). 
\end{proof}

\begin{theorem}
Let $\nabla$ be a minimizer in $\mathcal W ^p (E)$ of the Yang-Mills functional $\mathcal {MY}$, and the norm $||R^\nabla||$ be constant almost everywhere. 
Then the same connection $\nabla$ is a minimizer of the $p$-Yang-Mills functional $\mathcal {YM}_p\, ,$ and for any minimizer $\tilde {\nabla}$ of the  $p$-Yang-Mills functional $\mathcal {YM}_p\, $ in $\mathcal{W}^p (E)$,
the norm $||R^{\tilde {\nabla}}||$ is almost everywhere constant. \label{T: 10.2}
\end{theorem}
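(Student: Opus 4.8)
The plan is to transcribe the proof of Theorem \ref{T: 9.2} almost verbatim, with the convex function $e^t-1$ replaced by the convex, strictly increasing function $\phi(t)=\frac1p(2t)^{\frac p2}$ (for $p\ge 2$) and Jensen's inequality \eqref{9.3} replaced by its $p$-analogue \eqref{10.2}. First I would record that on $\mathcal W^p(E)=\mathcal L_1^p(E)\cap\mathcal L_1^2(E)$ both $\mathcal{YM}$ and $\mathcal{YM}_p$ are finite, so every expression below is meaningful and the two functionals are being minimized over the same class; no existence result is needed since the statement only concerns a given minimizer.

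Next I would use that $\nabla$ minimizes $\mathcal{YM}$: for every $\tilde\nabla\in\mathcal W^p(E)$ we have $\mathcal{YM}(\nabla)\le\mathcal{YM}(\tilde\nabla)$. Applying the increasing map $\phi$ after multiplying by $2/\text{Volume}(M)$, and then chaining with \eqref{10.2}, gives
\begin{equation*}
\frac1p\Big(\frac{2}{\text{Volume}(M)}\mathcal{YM}(\nabla)\Big)^{\frac p2}\le\frac1p\Big(\frac{2}{\text{Volume}(M)}\mathcal{YM}(\tilde\nabla)\Big)^{\frac p2}\le\frac{1}{\text{Volume}(M)}\mathcal{YM}_p(\tilde\nabla)\, ,
\end{equation*}
so that $\frac1p\big(\frac{2}{\text{Volume}(M)}\mathcal{YM}(\nabla)\big)^{\frac p2}\le\inf_{\tilde\nabla}\frac{1}{\text{Volume}(M)}\mathcal{YM}_p(\tilde\nabla)$. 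Since $||R^\nabla||$ is constant almost everywhere, the Jensen inequality \eqref{10.1} applied to $\nabla$ is an equality, whence
\begin{equation*}
\frac{1}{\text{Volume}(M)}\mathcal{YM}_p(\nabla)=\frac{||R^\nabla||^p}{p}=\frac1p\Big(\frac{2}{\text{Volume}(M)}\mathcal{YM}(\nabla)\Big)^{\frac p2}\, ;
\end{equation*}
combined with the previous display, $\nabla$ attains the infimum of $\frac{1}{\text{Volume}(M)}\mathcal{YM}_p$ over $\mathcal W^p(E)$, i.e. $\nabla$ is a minimizer of $\mathcal{YM}_p$.

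For the second assertion, let $\tilde\nabla$ be an arbitrary minimizer of $\mathcal{YM}_p$ in $\mathcal W^p(E)$. Then $\frac{1}{\text{Volume}(M)}\mathcal{YM}_p(\tilde\nabla)\le\frac{1}{\text{Volume}(M)}\mathcal{YM}_p(\nabla)=\frac1p\big(\frac{2}{\text{Volume}(M)}\mathcal{YM}(\nabla)\big)^{\frac p2}$, which, inserted into the chain above for this particular $\tilde\nabla$, forces all the inequalities there to be equalities; in particular equality holds in \eqref{10.2} for $\tilde\nabla$, so by the equality clause of Theorem \ref{T: 10.1} the norm $||R^{\tilde\nabla}||$ is constant almost everywhere. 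This is exactly the same sandwiching step (\eqref{9.7}, \eqref{9.6}, \eqref{9.4}) used in the proof of Theorem \ref{T: 9.2}.

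I expect the only delicate point to be the bookkeeping of the equality case: one must check that the equality statement in Theorem \ref{T: 10.1} is precisely ``equality in \eqref{10.1}/\eqref{10.2} $\Longleftrightarrow$ $||R^\nabla||$ constant a.e.'', and that $\phi$ being strictly increasing on $[0,\infty)$ for $p\ge2$ legitimately propagates the inequalities in the right direction so that equality can be pulled all the way back. Everything else is a line-by-line copy of the argument for Theorem \ref{T: 9.2}.
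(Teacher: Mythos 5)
Your proposal is correct and follows essentially the same argument as the paper: chain the minimality of $\nabla$ for $\mathcal{YM}$ through the increasing map $t\mapsto \frac 1p(2t)^{p/2}$ and Jensen's inequality \eqref{10.2}, use the constancy of $||R^\nabla||$ to turn Jensen into an equality (the paper's \eqref{10.5}) so that $\nabla$ minimizes $\mathcal{YM}_p$, and then sandwich any minimizer $\tilde\nabla$ of $\mathcal{YM}_p$ to force equality in \eqref{10.3} and invoke the equality clause of Theorem \ref{T: 10.1}. This matches the paper's proof line by line (the paper itself cites Theorem 9.1 at the last step where Theorem \ref{T: 10.1} is meant, as you correctly do).
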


\begin{proof} By the definition of minimizer  $\nabla$, and Jensen's inequality \eqref{10.2}, we have for each $\tilde {\nabla}$ in $\mathcal{W}^p (E)$, 
\begin{equation} 
\aligned
\frac 1p \bigg ( \frac {2}{\text{Volume}(M)} \mathcal {YM}(\nabla) \bigg )^{\frac p2}  & \le  \frac 1p \bigg ( \frac {2}{\text{Volume}(M)} \mathcal {YM}(\tilde {\nabla}) \bigg )^{\frac p2} \\
& \le \frac {1}{\text{Volume}(M)} \mathcal {YM}_p(\tilde {\nabla}). \endaligned \label{10.3}
\end{equation}
so that 
\begin{equation} 
\frac 1p \bigg ( \frac {2}{\text{Volume}(M)} \mathcal {YM}(\nabla) \bigg )^{\frac p2} \le \inf _{\tilde {\nabla} \in \mathcal{W}^p (E)}\frac {1}{\text{Volume}(M)} \mathcal {YM}_p(\tilde {\nabla}). \label{10.4}
\end{equation}
On the other hand, since $||R^{\nabla}|| =$ constant a.e.,
\begin{equation} 
\frac {1}{\text{Volume}(M)} \mathcal {YM}_p({\nabla}) = \frac {||R^\nabla||^p}{p} = \frac 1p \bigg ( \frac {2}{\text{Volume}(M)} \mathcal {YM}(\nabla) \bigg )^{\frac p2} \label{10.5}
\end{equation}
so that $\nabla$ is also a minimizer of the $p$-Yang-Mills functional $\mathcal {YM}_p\, .$

Now we assume  $\tilde {\nabla}$ is any minimizer of the $p$-Yang-Mills functional $\mathcal {YM}_p\, $ in $\mathcal{W} ^p (E)$. Then 
\begin{equation} 
\frac {1}{\text{Volume}(M)} {\mathcal {YM}}_p(\tilde {\nabla}) \le \frac {1}{\text{Volume}(M)} \mathcal {YM}_p({\nabla})\label{10.6}
\end{equation}
and combining \eqref{10.6}, \eqref{10.5} and \eqref{10.3} allows us to improve  
all inequalities in \eqref{10.3}
to equalities, so that we are ready to apply Theorem {9.1} and conclude that  
$||R^{\tilde {\nabla}}||$ is constant almost everywhere.

\end{proof}

\begin{remark}J. Eells and L. Lemaire first derive Jensen's inequality and establish its optimality in the setting of exponentially harmonic maps (\cite {EL}).
F. Matsuura and  H. Urakawa show $$\exp \bigg ( \frac {\mathcal {YM}(\nabla)}{\text{Volume}(M)}\bigg)  \le \frac {\mathcal {YM}_e(\nabla)}{\text{Volume}(M)}\ \ \text{for any}\ \  \nabla \in \mathcal{W} (E)\, ,$$ and the validity of equality (\cite {MU}). 
\end{remark}

\section{An extrinsic average variation method and $\Phi_{(3)}$-harmonic maps}

We propose an extrinsic, average variational method as an approach to confront and resolve problems in global, nonlinear analysis and geometry $($cf. \cite {W1, W3}$)$. 
In contrast to an average method in PDE that we applied in \cite {CW3} to obtain sharp growth estimates for warping functions in multiply warped product manifolds, we employ \emph {an extrinsic average variational method} in
the calculus of variations $($\cite {W3}$)$, 
find a large class of manifolds of positive Ricci curvature that enjoy rich properties, and introduce the notions of \emph {superstrongly unstable $(\operatorname{SSU})$ manifolds} and \emph {$p$-superstrongly unstable $(p$-$\operatorname{SSU})$ manifolds} $($\cite {W5,W2,W4,WY}$)$.
\begin{definition}\label{D: 11.1}
A Riemannian manifold $M$ with its Riemannian metric $\langle \, , \, \rangle _M$ is said to be
{\bf superstrongly unstable (SSU)} , if there exists an
isometric immersion of $M$ in $(\mathbb R^q, \langle \, \cdot\, \rangle _{\mathbb R^q})$ with its second fundamental form $B$, such that for every {\sl unit} tangent vector $v$
to $M$ at every point $x\in M$, the following symmetric linear operator $Q^M_x$
is {\sl negative definite}.
\begin{equation}\langle Q^M_x(v),v\rangle_M=\sum^m_{i=1} \bigg (2
\langle B(v,e_i), B(v, e_i)\rangle _{\mathbb R^q}  -
\langle B(v,v),  B(e_i, e_i)\rangle _{\mathbb R^q} \bigg )\label{11.1}\end{equation}
and $M$ is said to be
$p${\bf -superstrongly unstable ($p$-SSU)} for $p\geq 2$ if the following
functional is {\sl negative valued}.
\begin{equation}\label{11.2}
F_{p,x}(v)=(p-2)\langle \mathsf B(v,v), B(v,v)\rangle _{\mathbb R^q} + \langle Q^M_x(v),v\rangle _M,
\end{equation}
where $\lbrace e_1, \ldots, e_m\rbrace$ is a local orthonormal frame on $M$.
\end{definition}

We prove, in particular that every compact $\operatorname{SSU}$ manifold must be strongly unstable $(\operatorname{SU})$, i.e., $(\rm a)$ A compact $\operatorname{SSU}$ manifold cannot be the target of any nonconstant stable harmonic maps from any manifold, $(\rm b)$ The homotopic class of any map from any manifold into a compact $\operatorname{SSU}$ manifold contains elements of arbitrarily small energy $E$,
$(\rm c)$  A compact $\operatorname{SSU}$ manifold cannot be the domain of any nonconstant stable harmonic map into any manifold, and $(\rm d)$ The homotopic class of any map from a compact $\operatorname{SSU}$ manifold into any manifold contains elements of arbitrarily small energy $E$ $($cf. \cite [Theorem 2.2, p.321] {HoW2}$)$. \smallskip

\subsection{Harmonic maps and $p$-harmonic maps, from a viewpoint of the first  elementary symmetric function $\sigma _1$}\label{S: 11.1}$\qquad$
\smallskip

\noindent
 We recall at any fixed point $x_0 \in M\, ,$ a symmetric $2$-covariant tensor field $\alpha$ on $(M,g)$ in general, or the pullback metric  $u^{\ast}$ in particular, has the eigenvalues $\lambda$ relative to the metric $g$ of $M$; i.e., the $m$ real roots of the equation $$\det (g_{ij} \lambda - \alpha_{ij}) = 0\ \operatorname{where}\ g_{ij} = g(e_i,e_j),\  \alpha_{ij} = \alpha(e_i,e_j)\, ,$$ 
 and $\{e_1, \cdots e_m\}$ is a basis for $T_{x_0}(M)\, $ $($cf.,e.g., \cite {HW}$)$.
 
 \noindent
A harmonic map $u: (M,g) \to (N,h)$ can be viewed as a critical point of the energy functional, given by the integral of a half of first elementary symmetric function $\sigma _1\, ,$ of engenvalues relative to the metric $g$, or the trace of the pulback metric tensor $u^{\ast} h$, with respect to $g$, where $\{e_1, \cdots , e_m\}$ is an local orthonormal frame field on $M$.  That is,

\begin{equation}
E(u)=\int_ M \frac 12 \sum_{i=1}^m h\big (du (e_i), du(e_i)\big ) \, dv
  = \int _M {\frac 12} {\big (\sigma _1(u^{\ast})\big )}\, dv.
\label{11.3}
  \end{equation}
  \smallskip

\noindent
A $p$-harmonic map can be viewed as a critical point of the $p$-energy functional $E_p(u)$, given by the integral of $\frac 1p$ times  $\sigma _1 $ or the trace of the pullback metric tensor to the power ${\frac p2}$, i.e.,
\begin{equation}
E(u) = \int_ M \frac 1p \bigg (\sum_{i=1}^m h\big (du (e_i), du(e_i)\big ) \bigg )^{\frac p2} \, dv= \int _M\,  \frac {1}{p} \big ({\sigma _1} (u^{\ast})\big )^{\frac p2}\, dv.
\label{11.4}
\end{equation}
\smallskip

\noindent
For the study of the stability of harmonic maps $($ resp. $p$-harmonic maps $)$, Howard and Wei $($ \cite {HoW2} $)$ $\big ($ resp. Wei and Yau $($\cite {WY}$)$ $\big )$ introduce the following notions: 

\begin{definition}
A Riemannian manifold $M$ is said to be {\it strongly unstable} $(\operatorname{SU}) \big ($resp. {\it $p$-strongly unstable} $(p$-$\operatorname{SU}) \big )$ if $M$ is neither the domain nor the target of any nonconstant smooth stable harmonic map, (resp. stable $p$-harmonic map), and the homotopic class of maps
 from or into $M$ contains a map of arbitrarily small energy $E$ (resp. $p$-energy $E_p$).
\end{definition}

This definition leads to 

\begin{theorem} Every compact superstrongly unstable $(\operatorname{SSU})$-manifold $\big ($ resp. $p$-superstrongly unstable $(p$-$\operatorname{SSU})\big )$ manifold is strongly unstable $(\operatorname{SU})\, .\big ($ resp. $p$-strongly unstable $(p$-$\operatorname{SU})\big )\, .$\label{T: 11.3}
\end{theorem}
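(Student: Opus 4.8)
The plan is to verify, one at a time, the four properties that constitute strong instability, using the \emph{extrinsic average variational method}. Fix an isometric immersion $\iota\colon M\to(\mathbb R^q,\langle\,\cdot\,\rangle_{\mathbb R^q})$ realizing the $\operatorname{SSU}$ hypothesis, with second fundamental form $B$, and let $\{V_1,\dots,V_q\}$ be a parallel orthonormal frame on $\mathbb R^q$. Along any smooth map meeting $M$, split $V_\alpha=V_\alpha^{\top}+V_\alpha^{\perp}$ into parts tangent and normal to $M$. The fields $V_\alpha^{\top}$ are the test directions, and the point of summing (``averaging'') over $\alpha$ is that $\sum_\alpha V_\alpha^{\top}\otimes V_\alpha^{\top}$ and $\sum_\alpha B(V_\alpha^{\top},\cdot\,)$ are intrinsic, so that the Gauss--Codazzi equations convert the ambient second derivatives that appear into expressions built from $B$ alone.

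\emph{Target case, giving (a) and (b).} Let $f\colon N\to M$ be a nonconstant stable harmonic map and put $W_\alpha:=V_\alpha^{\top}\circ f$, a section along $f$. Insert each $W_\alpha$ into the harmonic-map index form
\[
I(W,W)=\int_N\sum_{i=1}^{m}\Big(\,|\nabla_{e_i}W|^2-\big\langle R^M\!\big(W,df(e_i)\big)df(e_i),W\big\rangle\Big)\,dv ,
\]
$\{e_i\}$ an orthonormal frame on $N$, and sum over $\alpha$. Using the Gauss equation, the two groups of terms combine exactly into the quadratic form of \eqref{11.1}, so that (up to a positive constant)
\[
\sum_{\alpha=1}^{q} I(W_\alpha,W_\alpha)=\int_N\sum_{i=1}^{m}\big\langle Q^M_{f(x)}\big(df(e_i)\big),df(e_i)\big\rangle\,dv .
\]
Compactness of $M$ and negative definiteness of $Q^M_x$ on unit vectors give a constant $c>0$ with $\langle Q^M_x(v),v\rangle\le-c|v|^2$ for all $v$, so the right-hand side is $\le-c\int_N|df|^2\,dv<0$ unless $f$ is constant. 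Hence some $W_\alpha$ has $I(W_\alpha,W_\alpha)<0$, contradicting stability; this proves (a). For (b), the same uniform estimate is promoted to a deformation: along a suitable flow built from the $V_\alpha^{\top}$ the energy of an arbitrary map $g\colon N\to M$ strictly decreases as long as it is positive, so every homotopy class contains maps of energy below any prescribed $\varepsilon>0$.

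\emph{Domain case, giving (c) and (d); and the $p$-version.} For a nonconstant stable harmonic map $f\colon M\to N$ one varies instead by precomposition, $f_t^{\alpha}:=f\circ\phi_t^{\alpha}$ with $\phi_t^{\alpha}$ the local flow of $X_\alpha:=V_\alpha^{\top}$ on $M$ (equivalently, one perturbs the domain metric by $\mathcal L_{X_\alpha}g$). The second variation of $E$ along $f_t^{\alpha}$ is a quadratic expression in $\nabla X_\alpha$ paired against the stress--energy tensor $S_f=e(f)\,g-f^{*}h$, with $e(f)=\tfrac12|df|^2$ the energy density (divergence-free since $f$ is harmonic); summing over $\alpha$ and applying the Gauss formula again produces an integral of $Q^M$-type integrands, negative unless $f$ is constant, which contradicts stability and proves (c), while (d) follows from the same mechanism as (b). Conditions (a)--(d) together say $M$ is $\operatorname{SU}$. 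The $p$-$\operatorname{SSU}\Rightarrow p$-$\operatorname{SU}$ statement is proved identically, with $e(f)$ replaced by $\tfrac1p(2e(f))^{p/2}$ and the index form by its $p$-harmonic analogue; differentiating the power $p/2$ contributes exactly the extra term $(p-2)\langle B(v,v),B(v,v)\rangle$, so $F_{p,x}$ of \eqref{11.2} replaces $\langle Q^M_x(v),v\rangle$ throughout.

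\emph{Main obstacle.} The infinitesimal half --- computing the averaged second variation and matching it to \eqref{11.1} or \eqref{11.2} --- is Gauss--Codazzi bookkeeping. The genuine difficulty is the homotopy conclusions (b) and (d): passing from a strictly negative, pointwise-uniform second variation to the \emph{global} fact that the infimum of energy over a homotopy class is $0$. This requires a careful gradient-type deformation of maps, where compactness of $M$ and the uniform bound $\langle Q^M_x(v),v\rangle\le-c|v|^2$ (resp.\ $F_{p,x}<0$) are precisely what keep the energy decreasing and prevent it from stalling at a positive value.
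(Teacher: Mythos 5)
Your averaged second--variation computation for the target case is correct and is exactly the classical extrinsic averaging argument behind the result: with $W_\alpha=V_\alpha^{\top}\circ f$ one indeed gets $\sum_{\alpha}I(W_\alpha,W_\alpha)=\int_N\sum_i\langle Q^M_{f(x)}(df(e_i)),df(e_i)\rangle\,dv$ (exactly, not just up to a constant), and compactness upgrades negative definiteness to a uniform constant, so no nonconstant stable harmonic map into $M$ exists; the domain case by precomposition with the flows of $V_\alpha^{\top}$ is of the same nature (though note that what pairs with $\nabla X^\flat$ and the stress--energy tensor is the \emph{first} variation under precomposition, not the second). Be aware that the paper itself does not prove Theorem \ref{T: 11.3}; it cites Howard--Wei \cite[Theorem 2.2, p.~321]{HoW2} for the $\operatorname{SSU}$ case and Wei--Yau \cite{WY} for the $p$-$\operatorname{SSU}$ case, so your sketch is being measured against that cited argument.

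The genuine gap is in the homotopy half of the definition of $\operatorname{SU}$ (statements (b) and (d): the infimum of energy in every homotopy class of maps into or from $M$ is zero). Your claim that ``along a suitable flow built from the $V_\alpha^{\top}$ the energy of an arbitrary map strictly decreases as long as it is positive'' is unsubstantiated and, as stated, fails: negativity of the averaged second variation is information at critical points only, while for a non-critical map $g$ the first-variation terms $\int_N\sum_i\langle B(dg(e_i),dg(e_i)),V_\alpha\rangle\,dv$ have no sign, so flowing along the fixed fields $V_\alpha^{\top}$ does not monotonically decrease energy; and a negative-gradient (heat-flow) argument can stall at nonconstant \emph{unstable} harmonic maps, so instability of critical points alone does not force the infimum to be $0$. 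The established route, which this paper itself signals in Theorem \ref{T: 11.5} and \eqref{11.7}--\eqref{11.8}, is: from (a), (c) deduce $\pi_1(M)=\pi_2(M)=0$ (a nontrivial free homotopy class of loops contains a length-minimizing, hence stable, closed geodesic, i.e.\ a stable harmonic map $S^1\to M$; a nontrivial $\pi_2$ yields, by Sacks--Uhlenbeck, an energy-minimizing, hence stable, nonconstant harmonic $2$-sphere), and then invoke White \cite{Wh} and Eells--Lemaire \cite{EL2}, according to which the infimum of energy in a homotopy class depends only on the $2$-homotopy type, which is trivial once $\pi_1=\pi_2=0$; this gives (b) and (d). Without this (or an equally careful deformation construction), your proof of $\operatorname{SU}$ is incomplete, and the same gap persists in the $p$-case, where in addition the passage to $p$-$\operatorname{SU}$ requires the $p$-harmonic machinery of \cite{WY,W5,W6} (regularity and representation of homotopy classes), not merely the formal replacement of $e(f)$ by $\frac1p(2e(f))^{p/2}$.
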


And, we make the following classification. 

\begin{theorem}[\cite {O, HoW}]\label{T: 11.4} Let $M$ be a  compact
irreducible symmetric space.  The following statements are
equivalent:

\begin{enumerate}
\item  $M$ is $\operatorname{SSU}$.

\item $M$ is $\operatorname{SU}$.

\item  $M$ is $\operatorname{U}$; i.e. $\operatorname{Id}_M$ is an unstable harmonic map.

\item   $M$ is one of the following:
\end{enumerate}

\begin{equation}
\begin{aligned}
& {\rm(i)}\  \text{the simply connected simple Lie groups}\quad
(A_l)_{l\geq 1},\quad B_2=C_2\quad \operatorname {and}\quad (C_l)_{l\geq 3};\\
& {\rm(ii)}\  SU(2n)/Sp(n),\quad n\geq 3;\\
& {\rm(iii)}\ \text{Spheres}\quad S^k,\quad k>2;\\
& {\rm(iv)}\  \text{Quaternionic Grassmannians}\quad
Sp(m+n)/Sp(m)\times Sp(n), m\geq n\geq 1;\\
& {\rm(v)}\  E_6/F_4;\\
& {\rm(vi)}\  \text{Cayley Plane}\quad  F_4/Spin(9)\,.\end{aligned}\label{11.5}
\end{equation}
\end{theorem}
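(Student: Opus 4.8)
The plan is to prove the chain $(1)\Rightarrow(2)\Rightarrow(3)$ and then to show that each of $(1)$ and $(3)$ is equivalent to a single numerical condition on $M$, whose explicit resolution over the compact irreducible symmetric spaces is exactly the list $(4)$.

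The implications $(1)\Rightarrow(2)\Rightarrow(3)$ are quick. That $(1)\Rightarrow(2)$ is Theorem \ref{T: 11.3}. For $(2)\Rightarrow(3)$, the map $\operatorname{Id}_M:M\to M$ is totally geodesic, hence harmonic; if $M$ is $\operatorname{SU}$ it cannot be the target of any nonconstant stable harmonic map, so $\operatorname{Id}_M$ is unstable, i.e. $M$ is $\operatorname{U}$. Next I would reduce $(3)$ to an eigenvalue inequality: normalize the (Einstein) metric so that $\operatorname{Ric}=c\,g$; the Hessian of the energy at $\operatorname{Id}_M$ along a vector field $V$ equals $\int_M(|\nabla V|^2-\operatorname{Ric}(V,V))\,dv$, and decomposing the dual $1$-form of $V$ into Hodge-Laplacian eigenforms and applying the Weitzenb\"ock identity $\Delta_H=\nabla^*\nabla+\operatorname{Ric}$ shows that on a compact irreducible symmetric space $\operatorname{Id}_M$ is unstable if and only if the smallest positive eigenvalue $\lambda_1$ of the Laplacian on functions satisfies $\lambda_1<2c$ (R.T. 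Smith's criterion, with the completion due to Nagano and Ohnita).

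To see that $(1)$ is governed by the same inequality, I would use the isometric minimal immersion $\phi:M\to S^N(r)\subset\mathbb R^{N+1}$ built from an orthonormal basis of the $\lambda_1$-eigenspace (Takahashi), with the homothety fixed so that $r^2=m/\lambda_1$. Writing the second fundamental form of $M$ in $\mathbb R^{N+1}$ as $B=B'+B''$, where $B'$ is the second fundamental form in $S^N(r)$ (traceless, by minimality) and $B''(X,Y)=-r^{-2}\langle X,Y\rangle\phi$, the Gauss equation for $\phi$ together with the Einstein condition gives $\sum_i|B'(v,e_i)|^2=(m-1)r^{-2}-c$, and then a direct substitution into \eqref{11.1} yields $\langle Q^M_x(v),v\rangle=\lambda_1-2c$ for every unit $v$. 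Hence $M$ is $\operatorname{SSU}$ (realized by this embedding) exactly when $\lambda_1<2c$, so together with $(1)\Rightarrow(2)\Rightarrow(3)$ the three notions $(1)$, $(2)$, $(3)$ are all equivalent to $\lambda_1<2c$. Finally $(4)$ is obtained by inserting the known values of $\lambda_1$ and $c$ for each $G/K$ — read off from the Casimir/highest-weight data of the isotropy-related representation of smallest highest weight (Nagano, Takahashi) — and keeping precisely those with $\lambda_1<2c$, which is the list \eqref{11.5}.

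The main obstacle is this last case-by-case tabulation, in particular the borderline and exceptional entries. Smith's original computation left several cases open — notably $SU(2n)/Sp(n)$ for small $n$, the quaternionic Grassmannians, $E_6/F_4$, and the Cayley plane $F_4/\operatorname{Spin}(9)$ — and deciding whether $\lambda_1<2c$ there requires careful representation-theoretic bookkeeping (supplied by Ohnita) as well as, on the geometric side, explicit descriptions of the standard Cartan-type embeddings and of $B'$ for the exceptional spaces in order to check negative-definiteness of $Q^M_x$ directly, as in the treatment of Howard--Wei. I would organize the verification by the type of the restricted root system, handling the classical families uniformly and the two exceptional spaces by separate computation with their standard embeddings.
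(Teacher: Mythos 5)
Your proposal is correct and is essentially the same approach the paper itself relies on: the paper offers no independent proof but cites \cite{O} and \cite{HoW}, and your chain of reasoning --- Theorem \ref{T: 11.3} for $(1)\Rightarrow(2)$, instability of $\operatorname{Id}_M$ reduced to Smith's criterion $\lambda_1<2c$ on the Einstein space, the computation $\langle Q^M_x(v),v\rangle_M=\lambda_1-2c$ for the standard first-eigenvalue minimal immersion into a sphere, and the case-by-case classification of $\lambda_1<2c$ --- is precisely the content of those references. The only part left open, the representation-theoretic tabulation over all compact irreducible symmetric spaces, is exactly what you correctly defer to \cite{O}, so no further argument is required.
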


\begin{theorem}[Topological Vanishing Theorem]\label{T: 11.5}
Suppose that $M$ is a compact $\operatorname{SSU} ( \operatorname{resp.}\,  p$-$\operatorname{SSU}\, )$ manifold. Then $M$ is $\operatorname{SU}$ and

\begin{equation} \begin{aligned}
\pi_1 (M) & = \pi_2 (M) = 0\\
\big (\operatorname{resp.}\, \pi_1(M) & = \cdots = \pi _{[p]} = 0\, \big ).
\end{aligned} \label{11.6}
\end{equation}   
Furthermore,  the following three statements are equivalent:
\begin{equation}
\begin{aligned}
& {\rm(a)}\ \pi_1 (M)  = \pi_2 (M) = 0\, .\\
& {\rm(b)}\ \text{the infimum of the energy $E$ is $0$ among maps homotopic to the identity on}\, M\, . \\
& {\rm(c)}\  \text{the infimum of the energy $E$  is $0$ among maps homotopic to a map from}\,  M\, .
\end{aligned} \label{11.7}
\end{equation}
That is,

\begin{equation} \begin{aligned}
\pi_1 (M)  = \pi_2 (M) = 0 & \overset {[\operatorname{Wh}]} \Longleftrightarrow  \inf \{ E(u^{\prime}) : u^{\prime} \text{is homotopic to}\,  \operatorname{Id}\, \text{on}\, M\},\\ 
                                           &   \overset {[\operatorname {EL2}]}  \Longleftrightarrow   \inf \{ E(u^{\prime}) : u^{\prime} \text{is homotopic to}\,  u : M \to \bullet\}
\end{aligned}  \label{11.8}
\end{equation}
\end{theorem}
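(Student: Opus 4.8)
The plan is to deduce the first assertion directly from Theorem~\ref{T: 11.3} and then to prove the equivalence of (a), (b), (c) as a separate, purely topological--analytic fact, so that $\pi_1(M)=\pi_2(M)=0$ (resp. $\pi_1(M)=\cdots=\pi_{[p]}(M)=0$) falls out of (b). Since $M$ is a compact $\operatorname{SSU}$ (resp. $p$-$\operatorname{SSU}$) manifold, Theorem~\ref{T: 11.3} gives that $M$ is $\operatorname{SU}$ (resp. $p$-$\operatorname{SU}$). By the very definition of $\operatorname{SU}$, the homotopy class of \emph{every} map from or into $M$ contains representatives of arbitrarily small energy $E$ (resp. $p$-energy $E_p$); taking such a class to be that of $\operatorname{Id}_M$ yields (b), and taking it to be that of an arbitrary $u\colon M\to\bullet$ yields (c). Hence it remains only to establish $(\mathrm a)\Leftrightarrow(\mathrm b)\Leftrightarrow(\mathrm c)$.

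For $(\mathrm a)\Leftrightarrow(\mathrm b)$ I would combine two ingredients. First, an obstruction-theoretic observation: for a $2$-skeleton $M^{(2)}$ of $M$, the inclusion $j\colon M^{(2)}\hookrightarrow M$ induces an isomorphism on $\pi_1$ and an epimorphism on $\pi_2$, so $j$ is null-homotopic if and only if $\pi_1(M)=\pi_2(M)=0$ --- in that case $M^{(2)}$ is simply connected and $2$-dimensional, so the sole obstruction to a null-homotopy of $j$ lies in $H^2(M^{(2)};\pi_2(M))=0$, while if either group is nonzero then $j_*$ cannot be the zero homomorphism onto it. Equivalently, by the Hurewicz and Whitehead theorems, $\pi_1(M)=\pi_2(M)=0$ says that $M$ has the homotopy type of a CW complex with a single $0$-cell and no cells of dimension $1$ or $2$, i.e.\ that $\operatorname{Id}_M$ is ``$2$-homotopically trivial.'' Second, B.~White's theorem on infima of energy functionals in homotopy classes, together with the deformation construction of Eells--Lemaire: $\inf\{E(u')\colon u'\simeq\operatorname{Id}_M\}=0$ holds exactly when $\operatorname{Id}_M|_{M^{(2)}}$ is null-homotopic, the ``$\Leftarrow$'' coming from a deformation that collapses the cells of dimension $\ge 3$ and the ``$\Rightarrow$'' from White's strictly positive lower bound for the energy in a homotopically nontrivial class. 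Combining the two ingredients gives $(\mathrm a)\Leftrightarrow(\mathrm b)$; the $p$-$\operatorname{SSU}$ case is verbatim the same with $M^{(2)}$ replaced by the $[p]$-skeleton, $E$ by $E_p$, and White's theorem applied to the $p$-energy.

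For $(\mathrm a)\Leftrightarrow(\mathrm c)$: the implication $(\mathrm c)\Rightarrow(\mathrm b)$ is immediate (apply (c) with $u=\operatorname{Id}_M$), hence $(\mathrm c)\Rightarrow(\mathrm b)\Rightarrow(\mathrm a)$ by the previous paragraph. Conversely, if (a) holds then $M$ is $2$-connected, hence homotopy equivalent to a complex whose $2$-skeleton is a point, so for \emph{every} $u\colon M\to\bullet$ the restriction $u|_{M^{(2)}}$ is null-homotopic; White's theorem (equivalently the Eells--Lemaire construction) then gives $\inf\{E(u')\colon u'\simeq u\}=0$, which is (c). This closes the cycle $(\mathrm a)\Rightarrow(\mathrm c)\Rightarrow(\mathrm b)\Rightarrow(\mathrm a)$ and, with the first paragraph, establishes $\pi_1(M)=\pi_2(M)=0$ (resp. $\pi_1(M)=\cdots=\pi_{[p]}(M)=0$).

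The step I expect to be the main obstacle is the analytic direction ``$\inf E=0\Rightarrow$ topological triviality on the $2$-skeleton'': this is precisely White's lower energy bound, and it must be invoked in a form compatible with the normalization of $E$ in \eqref{11.3} (that is, with exponent $p=2$); there is moreover a known subtlety for \emph{integer} $p$ --- a possible contribution of $\pi_p$ of the target to the energy gap --- that needs to be checked when passing to the $p$-$\operatorname{SSU}$ statement. A minor point is to fix the reading of (c) as ``for every target $\bullet$'' rather than ``for some target,'' so that the implications run in the directions claimed.
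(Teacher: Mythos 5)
Your proposal is correct and takes essentially the same route as the paper, which offers no written proof beyond citing Theorem \ref{T: 11.3} for the $\operatorname{SU}$ (resp. $p$-$\operatorname{SU}$) conclusion and the references [Wh] and [EL2] for the equivalence of (a), (b), (c); your argument has exactly this structure. The details you fill in --- that the inclusion of the $2$- (resp. $[p]$-) skeleton is null-homotopic precisely when $\pi_1(M)=\cdots=\pi_{[p]}(M)=0$, White's theorem for $E$ and $E_p$ applied to the identity class, and the cycle (a)$\Rightarrow$(c)$\Rightarrow$(b)$\Rightarrow$(a) --- are correct and consistent with those cited sources.
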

$($Cf. \cite [the diagram on p.58]{W3}.$)$ 

\subsection{$\Phi$-harmonic maps, from a viewpoint of the second elementary symmetric function $\sigma _2$\, (\cite {HW})}$\qquad$
\smallskip

\noindent
We introduce the notion of  $\Phi$-{\it harmonic map} which is the second symmetric function $\sigma_2$ of the pullback metric tensor $u^{\ast} h$, an analogue of $\sigma_1$ in the above subsection \ref{S: 11.1}. 

In \cite {HW}, Han and Wei show that the extrinsic average variational method in
the calculus of variations employed in the study of harmonic maps, $p$-harmonic maps, $F$-harmonic maps and Yang-Mills fields can be extended to the study of $\Phi$-harmonic maps. In fact, we find a large class of manifolds with rich properties, \emph {$\Phi$-superstrongly unstable $(\Phi$-$\operatorname{SSU})$ manifolds}, establish their links to $p$-$\operatorname{SSU}$ manifolds and topology, and apply the theory of $p$-harmonic maps, minimal varieties and Yang-Mills fields to study such manifolds. With the same notations as above, we introduce the following notions:

\begin{definition}
A Riemannian manifold $(M^m,g)$ with a Riemannian metric $g$ is said to be $\Phi$-superstrongly unstable $(\Phi$-$\operatorname{SSU})$ if there exists an isometric immersion $\mathbb R^q$ such that, for all unit tangent vectors $v$ to at every point $x\in M^m$, the following functional is always negative:
\begin{equation}
F_{{\Phi}_x}(v)=\sum_{i=1}^m \big (4\langle B(v,e_i),B(v,e_i)\rangle-\langle B(v,v),B(e_i,e_i)\rangle\big ), 
\label{11.9}
\end{equation}
where $B$ is the second fundamental form of $M^m$ in $\mathbb R^q$, and $\{e_1,\cdots,e_m\}$ is a local orthonormal frame on $M$ near $x$.
\end{definition}

\begin{definition}
A Riemannian manifold $M$ is $\Phi$-strongly unstable $(\Phi$-$\operatorname{SU})$ if it is neither the domain nor the target of any nonconstant smooth $\Phi$-stable stationary map, and the homotopic class of maps from or into $M$ contains a map of arbitrarily small energy.
\end{definition}

\begin{theorem} Every compact $\Phi$-superstrongly unstable $(\Phi$-$\operatorname{SSU})$ manifold  is $\Phi$-strongly unstable $(\Phi$-$\operatorname{SU})\, .$\label{T: 11.8}
\end{theorem}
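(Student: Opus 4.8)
The plan is to run the extrinsic average variational method of \cite{W3, HW}, in complete analogy with the proof of Theorem \ref{T: 11.3}, now for the $\Phi$-energy functional $E_{\Phi}$, whose integrand is built from the second elementary symmetric function $\sigma_{2}$ of the pullback metric tensor $u^{\ast}h$. Using the isometric immersion $\iota : M^{m}\hookrightarrow\mathbb{R}^{q}$ furnished by the $\Phi$-SSU hypothesis (equivalently, by Nash's theorem, since $M$ is compact), with second fundamental form $B$, one attaches to each constant vector $V\in\mathbb{R}^{q}$ its tangential component $v=V^{\top}$ along $M$. The fields $V^{\top}$, suitably transported to whichever side of a map $M$ occupies, are the test variation fields, and the crux of the method is that the average over $V\in S^{q-1}$ of the second variation (the Hessian $I_{E_{\Phi}}$) of $E_{\Phi}$ collapses into the functional $F_{\Phi_{x}}$ of \eqref{11.9}.

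First I would establish that $M$ is neither the domain nor the target of a nonconstant smooth $\Phi$-stable stationary map. Suppose $u : N\to M$ is nonconstant and $\Phi$-stationary, with target $M$. Taking the variation fields $w_{V}:=V^{\top}\circ u\in\Gamma(u^{-1}TM)$, inserting them into the second variation formula for $E_{\Phi}$, and integrating over $V\in S^{q-1}$, I would invoke the moment identities $\int_{S^{q-1}}\langle V,a\rangle\langle V,b\rangle\,dV=c_{1}\langle a,b\rangle$ and the corresponding fourth-order identity, together with the Gauss equation and the Gauss formula $\nabla^{\mathbb{R}^{q}}_{X}Y=\nabla_{X}Y+B(X,Y)$ (no minimality of $\iota$ is needed; the trace terms are absorbed this way). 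The average then reduces, up to a positive constant, to an integral over $N$ of a pointwise quadratic expression in $du$ which, after factoring the norms $|du(e_{i})|$ and using the homogeneity of $F_{\Phi_{x}}$, is bounded above by a negative multiple of $\sum_{i}F_{\Phi_{x}}(du(e_{i})/|du(e_{i})|)$, hence strictly negative wherever $du\neq 0$, i.e.\ somewhere, since $u$ is nonconstant. Therefore $\int_{S^{q-1}}I_{E_{\Phi}}(w_{V},w_{V})\,dV<0$, so $I_{E_{\Phi}}(w_{V_{0}},w_{V_{0}})<0$ for some $V_{0}\in S^{q-1}$, contradicting $\Phi$-stability. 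The domain case $u:M\to N$ is handled symmetrically, with $w_{V}:=du(V^{\top})$ in place of $V^{\top}\circ u$.

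Next I would show that the homotopy class of any map from or into $M$ contains maps of arbitrarily small energy. The instability assertion, together with the appropriate existence and minimization results, rules out nonconstant energy-minimizing $\Phi$-stationary maps with domain or target $M$, and thereby forces $\pi_{1}(M)=\cdots=\pi_{4}(M)=0$ (cf.\ Table \ref{eqtable}); the Hurewicz and Whitehead theorems then permit compressing any map from or into $M$ through the low-dimensional skeleta, and the energy-reduction/rescaling technique of \cite{HoW2, WY} drives the energy to $0$ along the resulting homotopy. Combining the two parts shows that $M$ is $\Phi$-SU.

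The main obstacle is the averaging computation for the \emph{quartic} functional $E_{\Phi}$: unlike the harmonic-map case, the second variation of $\sigma_{2}(u^{\ast}h)$ contains genuine fourth-order terms in $du$, so one must evaluate fourth moments over $S^{q-1}$ and carry a more delicate bookkeeping over orthonormal frames before the expression condenses into $F_{\Phi_{x}}$; keeping track of the non-unit factors $|du(e_{i})|$ and invoking the correct homogeneity of $F_{\Phi_{x}}$ is the step most prone to error. A secondary, softer difficulty is making the skeleton-by-skeleton energy-reduction estimate in the small-energy part fully rigorous.
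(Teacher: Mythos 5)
The paper itself supplies no proof of Theorem \ref{T: 11.8}: it is quoted from Han--Wei \cite{HW}, and the method attributed to it is the extrinsic average variational method, i.e.\ the same scheme used for Theorems \ref{T: 11.3} and \ref{T: 11.9}--\ref{T: 11.11}. Your first half matches that method: immerse $M$ in $\mathbb R^q$, take the variation fields $V^{\top}\circ u$ (target case) or $du(V^{\top})$ (domain case), average the index form of $E_{\Phi}$ over $V$, and let the $\Phi$-SSU condition \eqref{11.9} force the average to be negative at a nonconstant $\Phi$-stable stationary map. One correction there: since the index form is quadratic in the variation field and $V^{\top}$ is linear in $V$, only \emph{second} moments of $V$ over $S^{q-1}$ (equivalently, a sum over an orthonormal basis of $\mathbb R^q$) enter the average; the ``fourth moments'' you worry about never appear, even though the integrand of $E_{\Phi}$ is quartic in $du$. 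The delicate bookkeeping is in the second variation formula for $E_{\Phi}$ itself, not in the moments.

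The genuine gap is in your second half. You propose to deduce $\pi_1(M)=\cdots=\pi_4(M)=0$ from the nonexistence of stable $\Phi$-stationary maps ``together with the appropriate existence and minimization results'' for $E_{\Phi}$-minimizers. No such existence/regularity theory for $\Phi$-minimizers in homotopy classes is available, and it is exactly to avoid this that the literature argues pointwise and algebraically: $\Phi$-SSU implies $p$-SSU for all $2\le p\le 4$ by comparing \eqref{11.9} with \eqref{11.2} (the same elementary estimate carried out for $\Phi_{(3)}$ in Theorem \ref{T: 11.13}), then compact $p$-SSU manifolds are $[p]$-connected by the established $p$-harmonic theory \cite{W5, W6}, giving $\pi_1=\cdots=\pi_4=0$; finally the ``arbitrarily small energy in every homotopy class'' statement follows from White's theorem \cite{Wh} (cf.\ the equivalences recorded in Theorem \ref{T: 11.5}, and \cite{EL2}), not from a skeleton-by-skeleton rescaling you would have to build by hand. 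With that replacement your outline aligns with the cited proof; as written, the topological/small-energy step does not go through.
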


\subsection{$\Phi_{S}$-harmonic maps, from a viewpoint of an extended second symmetric function $\sigma _2$\, (\cite {FHLW})}$\qquad$
\smallskip

\noindent
We introduce the notion of $\Phi_{S}$-harmonic maps, which is a
$\sigma _2$ version of the stress energy tensor $S$.$\qquad$
\smallskip

\noindent
In \cite {FHLW}, Feng, Han, Li,  and Wei show that the extrinsic average variational method in
the calculus of variations employed in the study of $\sigma_ 1$ and $\sigma _2$ versions of the pullback metric $u^{\ast} h$ on $M$ can be extended to the study of a $\sigma _2$ version of the stress  energy tensor $S$.
In fact, we find a large class of manifolds, {\it $\Phi_S$-superstrongly unstable $(\Phi_S$-$\operatorname{SSU})$ manifolds}, introduce the notions of a stable $\Phi_S$-harmonic map, $\Phi_S$-strongly unstable $($$\Phi_S$-$\operatorname{SU}$$)$ manifolds,
and prove 

\begin{theorem} Every compact $\Phi_S$-superstrongly unstable $(\Phi_S$-$\operatorname{SSU})$ manifold  is $\Phi_S$-strongly unstable $(\Phi_S$-$\operatorname{SU})\, .$\label{T: 11.9}
\end{theorem}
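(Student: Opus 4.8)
Theorem 11.14 asserts that every compact $\Phi_S$-superstrongly unstable ($\Phi_S$-SSU) manifold is $\Phi_S$-strongly unstable ($\Phi_S$-SU). The plan is to follow the same extrinsic average variational method that yielded Theorems \ref{T: 11.3} and \ref{T: 11.8}, now applied to the $\sigma_2$-version of the stress-energy tensor $S$. The structure of such a proof always has two halves: a \emph{domain half} (a compact $\Phi_S$-SSU manifold cannot be the domain of a nonconstant stable $\Phi_S$-stationary map, and infima of the relevant energy along homotopy classes of maps out of $M$ vanish) and a \emph{target half} (the same manifold cannot be the target of such a map, and infima along homotopy classes into $M$ vanish). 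I would treat the two halves separately but in parallel.

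First I would set up the second-variation formula for the $\Phi_S$-energy functional and identify the natural test variations. For the target half, given a map $f\colon N \to M$ and the isometric immersion $M \hookrightarrow \mathbb{R}^q$ from the $\Phi_S$-SSU hypothesis, one pulls back the constant coordinate vector fields of $\mathbb{R}^q$, projects them tangentially along $f$, and uses these as variation fields $V_a$, $a = 1,\dots,q$. Summing the second variation over $a$ collapses the $\mathbb{R}^q$-ambient terms, and the averaged second variation is expressed pointwise through the operator built from the second fundamental form $B$ — precisely the functional $F_{\Phi_S,x}(v)$ appearing in the definition of $\Phi_S$-SSU. Negativity of that functional on all unit $v$ then forces the averaged second variation to be strictly negative unless $df = 0$, so no nonconstant stable $\Phi_S$-stationary map into $M$ can exist. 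For the domain half one dualizes: use the same test vector fields but now as variations of a map $f\colon M \to N$, invoke the analogous averaging identity (this is where the stress-energy tensor $S$ and its $\sigma_2$ refinement enter, since the relevant quadratic form on the domain side is governed by $S$ rather than by the raw pullback metric), and again read off strict negativity of the averaged index form from the $\Phi_S$-SSU inequality.

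Next I would handle the homotopy/infimum statements, i.e. that the infimum of the $\Phi_S$-energy is $0$ in every homotopy class of maps from or into $M$. Here the standard device is to compose with a family of conformal-type or homothetic deformations of $M$ (or of a sphere factor) that shrink the relevant energy density: one uses the $\Phi_S$-SSU condition to produce, by an averaging argument over $SO$-actions or over the unit sphere bundle, a vector field along a given map whose flow strictly decreases the $\Phi_S$-energy, and iterates. Combined with the established vanishing of low homotopy groups for $\Phi_S$-SSU manifolds (the $\Phi_S$-analogue of Theorem \ref{T: 11.5}) and a Whitehead-type argument, this yields that any homotopy class contains maps of arbitrarily small energy. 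Putting the two halves and the infimum statements together gives exactly the assertion that $M$ is $\Phi_S$-SU.

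The main obstacle, and the place where genuine new computation is required rather than a transcription of the $\Phi$ or $p$ cases, is the averaging identity for the $\sigma_2$-version of the stress-energy tensor $S$: one must verify that, after summing over the $q$ ambient test directions, the second variation of the $\Phi_S$-functional reduces cleanly to an integral of $F_{\Phi_S,x}(v)$ against a nonnegative density, with all curvature terms of $N$ and all cross terms either cancelling or having a favorable sign. This requires carefully tracking how $S$ — as opposed to $u^\ast h$ — transforms under the variation, and is the step I would expect to consume most of the work; once it is in hand, the rest of the argument is a routine adaptation of Theorems \ref{T: 11.3}, \ref{T: 11.8}, and \ref{T: 11.9}, and indeed the paper presumably refers to \cite{FHLW} for these details.
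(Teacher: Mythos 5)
The paper itself does not prove Theorem \ref{T: 11.9}: it is quoted from the reference \cite{FHLW}, and the surrounding text only records that the extrinsic average variational method extends to the $\sigma_2$-version of the stress-energy tensor. So there is no in-paper argument to compare against; your outline is consistent in spirit with the strategy of \cite{FHLW} and with the earlier $\operatorname{SSU}$, $p$-$\operatorname{SSU}$ and $\Phi$-$\operatorname{SSU}$ cases (Theorems \ref{T: 11.3}, \ref{T: 11.8}).

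As a proof, however, your proposal has a genuine gap, and you name it yourself: the entire content of the theorem lies in the second-variation formula for the $\Phi_S$-energy and in the verification that, after summing over the $q$ test fields obtained by tangentially projecting the constant vector fields of $\mathbb{R}^q$ (on the target side) or using the corresponding deformations of the domain (on the domain side), the averaged index form reduces to an integral of the quadratic form $F_{\Phi_S,x}$ against a nonnegative density, with the curvature and cross terms either cancelling or acquiring a favorable sign. You defer exactly this computation, so what remains is a plan rather than a proof; nothing in the proposal shows that the $\sigma_2$-structure of $S$ does not produce obstructing terms (this is precisely why \cite{FHLW} is a separate paper). A second, smaller imprecision concerns the small-energy statement in the definition of $\Phi_S$-$\operatorname{SU}$: the established mechanism is not a flow or iteration that strictly decreases the $\Phi_S$-energy, but the vanishing of the low homotopy groups of a compact $\Phi_S$-$\operatorname{SSU}$ manifold combined with White's theorem \cite{Wh} on infima of energy functionals in ($d$-)homotopy classes, as in \eqref{11.8}; your appeal to ``conformal-type or homothetic deformations'' and an averaging over $SO$-actions is not the argument that is actually available here and would itself need justification for the $\Phi_S$-functional.
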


\subsection{$\Phi_{S,p}$-harmonic maps, from a viewpoint of a combined extended second symmetric function $\sigma _2$\, (\cite {FHW})}$\qquad$
\smallskip

\noindent
We introduce the notion of $\Phi_{S,p}$-harmonic maps, which is a combined generalized
$\sigma _2$ version of the stress energy tensor $S$, and a $\sigma _1$ version of the pullback $u^{\ast}$.\smallskip

In \cite {FHLW}, Feng, Han, Li,  and Wei show that the extrinsic average variational method in
the calculus of variations employed in the study of $\sigma_ 1$ and $\sigma _2$ versions of the pullback metric $u^{\ast} h$ on $M$ and stress-energy tensor can be extended to the study of a combined extended second symmetric function $\sigma _2$ version.
In fact, we find a large class of manifolds, $\Phi_{S,p}$-superstrongly unstable $(\Phi_{S,p}$-$\operatorname{SSU})$ manifolds, introduce the notions of a stable $\Phi_{S,p}$-harmonic map, $\Phi_{S,p}$-strongly unstable $($$\Phi_{S,p}$-$\operatorname{SU}$$)$ manifolds, and prove 

\begin{theorem} Every compact $\Phi_{S,p}$-superstrongly unstable $(\Phi_{S,p}$-$\operatorname{SSU})$ manifold  is $\Phi_{S,p}$-strongly unstable $(\Phi_{S,p}$-$\operatorname{SU})\, .$\label{T: 11.10}
\end{theorem}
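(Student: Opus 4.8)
The plan is to reproduce, now for the $\Phi_{S,p}$-energy functional, the extrinsic average variational method that underlies Theorems \ref{T: 11.3}, \ref{T: 11.8} and \ref{T: 11.9} (cf. \cite{HoW2, WY, HW, FHLW, FHW}). First I would apply the Nash embedding theorem to realize the compact manifold $M$ as a submanifold isometrically immersed in some $\mathbb{R}^q$ with second fundamental form $B$; the $\Phi_{S,p}$-SSU hypothesis says precisely that the $\Phi_{S,p}$-analogue $F_{\Phi_{S,p},x}(v)$ of the functionals $F_{p,x}$ and $F_{\Phi_x}$ in \eqref{11.2} and \eqref{11.9}, built from $B$, is negative for every unit $v\in T_xM$ and every $x\in M$. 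Out of each constant vector $V\in\mathbb{R}^q$ the method manufactures a variation: for a stable $\Phi_{S,p}$-harmonic map $f:W\to M$ one sets $f_t=\Pi(f+tV)$ with $\Pi$ the nearest-point retraction of a tubular neighborhood of $M$ in $\mathbb{R}^q$, whose variation field is $V^T\circ f$, the tangential projection of $V$ along $M$ (cutting $V$ off by a compactly supported function when $W$ is noncompact); for a stable $\Phi_{S,p}$-harmonic map $u:M\to N$ one sets $u_t=u\circ\phi_t$ with $\phi_t$ the flow on $M$ of $V^T$.

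Second, I would compute the second variation $\tfrac{d^2}{dt^2}\big|_{t=0}$ of the $\Phi_{S,p}$-energy along each such variation and then sum over $V$ running through an orthonormal basis $\{V_\alpha\}_{\alpha=1}^q$ of $\mathbb{R}^q$. The identity $\sum_\alpha\langle V_\alpha,X\rangle\langle V_\alpha,Y\rangle=\langle X,Y\rangle$ forces the cross terms and the ambient curvature terms of $N$ to cancel and reassembles what remains into an integral over the domain of $F_{\Phi_{S,p},x}$ evaluated on unit-normalized image vectors of $df$ (respectively on the eigendirections of $u^\ast h$, weighted by the eigenvalues); by $\Phi_{S,p}$-SSU this total second variation is strictly negative unless the map is constant. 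Hence a nonconstant stable $\Phi_{S,p}$-harmonic map with target $M$, or with domain $M$, would admit some $V_\alpha$ whose variation strictly decreases the $\Phi_{S,p}$-energy, contradicting stability. Thus $M$ is neither the domain nor the target of any nonconstant stable $\Phi_{S,p}$-harmonic map.

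It remains to show that the homotopy class of any map from or into $M$ contains maps of arbitrarily small energy. For this I would invoke that the $\Phi_{S,p}$-SSU condition forces $\pi_1(M)=\cdots=\pi_{[2p]}(M)=0$ (as in \cite{FHW}, either directly or by comparing the pointwise quadratic form of $\Phi_{S,p}$-SSU with that of $[2p]$-SSU and applying Theorem \ref{T: 11.5}), and then apply the White and Eells--Lemaire equivalences recorded in \eqref{11.8}, which convert such vanishing of homotopy groups into the vanishing of the infimum of the energy over a homotopy class. Combining this with the two stability statements gives $\Phi_{S,p}$-SU. I expect the genuine work to be the second step: one must first write down the correct second variation formula for the $\Phi_{S,p}$-energy --- which intertwines the generalized $\sigma_2$-stress-energy tensor $S$ with the $\sigma_1$-pullback $u^\ast h$ --- and then check that the Euclidean averaging really does annihilate all the extraneous ($N$-curvature and cross) terms and reconstitute exactly $F_{\Phi_{S,p},x}$. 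This bookkeeping, already intricate for $\Phi_S$ in \cite{FHLW}, is the crux; everything else is a routine adaptation of \cite{FHW}.
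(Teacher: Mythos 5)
The paper itself contains no proof of Theorem \ref{T: 11.10}: it is stated as a result imported from \cite{FHW}, and your outline --- isometric embedding of $M$ in $\mathbb{R}^q$, second variation along variations generated by tangential projections of constant ambient vectors, averaging over an orthonormal basis of $\mathbb{R}^q$ so that the extraneous terms cancel and the $\Phi_{S,p}$-SSU quadratic form emerges, then connectedness ($\pi_1=\cdots=\pi_{[2p]}=0$, as in Table 1) together with the White and Eells--Lemaire equivalences of Theorem \ref{T: 11.5} for the small-energy half --- is precisely the extrinsic average variational method that \cite{FHW} and the parallel results here (Theorems \ref{T: 11.3}, \ref{T: 11.8}, \ref{T: 11.9}, \ref{T: 11.11}) employ. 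So your approach coincides with the one the paper relies on; the second variation formula for the $\Phi_{S,p}$-energy and the verification that Euclidean averaging reproduces exactly $F_{\Phi_{S,p},x}$, which you correctly identify as the crux and leave unverified, are exactly the content supplied in the cited reference rather than in this paper.
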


\subsection{$\Phi_{(3)}$-harmonic maps, from a viewpoint of the third elementary symmetric function $\sigma _3$ (\cite {FHJW})}$\qquad$
\smallskip

\noindent
 We introduce the notion of of $\Phi_{(3)}$-harmonic maps, which is a $\sigma _3$ version
of the pullback $u^{\ast}$.

In fact, Feng, Han, Jiang,  and Wei show that the extrinsic average variational method in
the calculus of variations employed in the study of $\sigma_ 1$ and $\sigma _2$ versions of the pullback metric $u^{\ast} h$ on $M$ can be extended to the study of the third symmetric function $\sigma _3$ version.
Whereas we can {view harmonic maps as $\Phi_{(1)}$-harmonic maps} (involving $\sigma_1$) and $\Phi$-harmonic maps as $\Phi_{(2)}$-harmonic maps (involving $\sigma_2$) , we introduce the notion of  a $\Phi_{(3)}$-harmonic map and find a large class of manifolds, $\Phi_{(3)}$-superstrongly unstable ($\Phi_{(3)}$-$\operatorname{SSU}$) manifolds, introduce the notions of a stable $\Phi_{(3)}$-harmonic map, $\Phi_{(3)}$-strongly unstable ($\Phi_{(3)}$-$\operatorname{SU}$) manifolds, and prove 

\begin{theorem}[\cite {FHJW}] Every compact $\Phi_{(3)}$-superstrongly unstable $(\Phi_{(3)}$-$\operatorname{SSU})$ manifold  is $\Phi_{(3)}$-strongly unstable $(\Phi_{(3)}$-$\operatorname{SU})\, .$\label{T: 11.11}
\end{theorem}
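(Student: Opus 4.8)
The plan is to run the extrinsic average variational method of \cite{W3} in the $\sigma_3$ setting, exactly as it was carried through for $\Phi_{(1)}$-harmonic (harmonic) maps, $p$-harmonic maps, $\Phi=\Phi_{(2)}$-harmonic maps, $\Phi_S$-harmonic maps, and $\Phi_{S,p}$-harmonic maps (Theorems \ref{T: 11.3}, \ref{T: 11.8}, \ref{T: 11.9}, \ref{T: 11.10}). Let $M^m$ be compact and $\Phi_{(3)}$-SSU, and fix the isometric immersion $\iota: M \hookrightarrow \mathbb{R}^q$ witnessing the $\Phi_{(3)}$-SSU condition, with second fundamental form $B$. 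For a fixed vector $v\in\mathbb{R}^q$, decompose $v = v^{\top} + v^{\perp}$ into tangential and normal parts along $\iota$; since $v$ is parallel in $\mathbb{R}^q$ one has $\nabla_X v^{\top} = A_{v^{\perp}}X$ with $\langle A_{v^{\perp}}X,Y\rangle = \langle B(X,Y),v^{\perp}\rangle$, so that all first- and second-order derivatives of $v^{\top}$ are expressed through $B$. Using $v^{\top}$ one builds a canonical deformation of any map (pre- or post-composition with the flow of $v^{\top}$, equivalently with $x\mapsto\pi(x+tv)$ for a tubular retraction $\pi$ of $\iota$); the whole point of the method is that averaging the associated second-variation integrand of the $\Phi_{(3)}$-energy over $v$ in the unit sphere $S^{q-1}\subset\mathbb{R}^q$ reduces the extrinsic quantities to the intrinsic functional $F_{\Phi_{(3)},x}$ defining $\Phi_{(3)}$-SSU (the $\sigma_3$ analogue of \eqref{11.1} and \eqref{11.9}).

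First I would show $M$ cannot be the \emph{target} of a nonconstant stable $\Phi_{(3)}$-harmonic map. Given such a map $u:(N,k)\to(M,g)$, post-compose it with the deformation generated by $v^{\top}$ to obtain a variation $u_{t,v}$, and compute $H(v):=\frac{d^2}{dt^2}\big|_{t=0}E_{\Phi_{(3)}}(u_{t,v})$. This Hessian is an integral over $N$ of an expression quadratic in $du$ with coefficients built from $\nabla v^{\top}$, $\nabla^2 v^{\top}$ (hence from $B$ and $v^{\perp}$) and from the elementary symmetric functions of the eigenvalues of $u^{\ast}g$. Integrating $H(v)$ over $v\in S^{q-1}$ and invoking the standard spherical averaging identities (the second and fourth moments $\int_{S^{q-1}}v_iv_j\,d\sigma$ and $\int_{S^{q-1}}v_iv_jv_kv_l\,d\sigma$) collapses all cross terms so that the averaged Hessian density at each point equals a nonpositive multiple of $F_{\Phi_{(3)},x}$ evaluated on a frame adapted to $du$, which is strictly negative wherever $du\neq0$ by the $\Phi_{(3)}$-SSU hypothesis. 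Hence $\int_{S^{q-1}}H(v)\,d\sigma(v)<0$, so some $v$ yields an index-lowering variation and $u$ is unstable. The \emph{domain} case is dual: precompose the identity map of $M$ (or any map out of $M$) with the flow of $v^{\top}$ and run the identical averaging argument; the same inequality forces the averaged second variation negative, so no nonconstant stable $\Phi_{(3)}$-harmonic map can have domain $M$.

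For the homotopy-with-small-energy part of $\Phi_{(3)}$-SU, I would use that on the negativity locus of $F_{\Phi_{(3)},x}$ the flow $\psi_t$ of a suitable $v^{\top}$ strictly decreases the $\Phi_{(3)}$-energy to first order; composing an arbitrary map in a given homotopy class with $\psi_t$ and letting $t$ run therefore drives $E_{\Phi_{(3)}}$ down to its infimum $0$ inside that class, exactly as in \cite[Theorem 2.2]{HoW2} and its $\Phi$-analogue in \cite{HW}. Assembling these three pieces yields that $M$ is $\Phi_{(3)}$-strongly unstable.

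The main obstacle is the second-variation computation itself. Because $\sigma_3$ of the pullback metric mixes three distinct eigendirections, the Hessian integrand has coefficients that are cubic in the eigenvalues of $u^{\ast}g$ and carries more intertwined $B$- and curvature-contributions than in the $\sigma_1$ or $\sigma_2$ cases; the delicate step is to verify that after the spherical averaging every one of these terms reorganizes into a single clean multiple of $F_{\Phi_{(3)},x}$ with a definite sign, leaving no residual positive-semidefinite remainder. This combinatorial bookkeeping — which is precisely the content worked out in \cite{FHJW} — is where the real effort lies; once it is in place, the instability and small-energy conclusions follow by the now-standard template above.
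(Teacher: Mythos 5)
You should first note that the paper offers no proof of Theorem \ref{T: 11.11} at all: it is quoted verbatim from the preprint \cite{FHJW}, so there is no in-text argument to compare against. Judged on its own merits, your outline does identify the expected strategy (the extrinsic average variational method: deformations generated by the tangential parts $v^{\top}$ of constant vectors $v\in\mathbb{R}^{q}$, second variation of $E_{\Phi_{(3)}}$, averaging over $v\in S^{q-1}$), but it has a genuine gap at exactly the decisive point. The second-variation formula for the $\sigma_3$-energy and the verification that its spherical average collapses, pointwise, to a negative multiple of ${F_{\Phi_{(3)}}}_x$ with the coefficients $6$ and $1$ of Definition \ref{D: 11.11} are never carried out; you explicitly defer this ``combinatorial bookkeeping'' to \cite{FHJW}, which is precisely the substance of the theorem being proved. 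Without that computation, the assertion that no positive-semidefinite remainder survives the averaging is a statement of hope, not a proof, so the instability halves (no nonconstant stable $\Phi_{(3)}$-harmonic map with $M$ as target or as domain) are not actually established by your text.

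The small-energy half of the $\Phi_{(3)}$-SU conclusion is also not proved by the argument you sketch, and as stated it would fail. A strict first-order decrease of $E_{\Phi_{(3)}}$ under the flow $\psi_t$ of some $v^{\top}$ at $t=0$ does not imply that $E_{\Phi_{(3)}}(u\circ\psi_t)$ (or $\psi_t\circ u$) tends to the infimum $0$ in the homotopy class; note moreover that the first variation is linear in $v$, so its average over $S^{q-1}$ vanishes, and the negativity of ${F_{\Phi_{(3)}}}_x$ is a second-variation, not a first-variation, statement, so ``suitable $v$'' is not supplied by the SSU hypothesis. The established route in this circle of results is different: the averaged second-variation argument rules out nonconstant stable maps, one deduces that $M$ is $p$-SSU for $2\le p\le 6$ and hence $\pi_1(M)=\cdots=\pi_6(M)=0$ (Theorems \ref{T: 11.13} and \ref{T: 11.14}, via \cite{W5}), and then the infimum of the relevant (degree-six) energy in every homotopy class is $0$ by the theorems of White \cite{Wh} and Eells--Lemaire \cite{EL2} on infima of energy functionals and $d$-homotopy type, exactly as in the pattern recorded in Theorem \ref{T: 11.5} and in \cite{HoW2}, \cite{HW}. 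To repair your proposal you should replace the flow argument by this topological-vanishing-plus-White step and actually perform the $\sigma_3$ second-variation and spherical-averaging computation.
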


\begin{definition}[\cite {FHJW}]\label{D: 11.11}
A Riemannian manifold $M^{m}$ is said to be $\Phi_{(3)}$-superstrongly
unstable $($$\Phi_{(3)}$-$\operatorname{SSU}$$)$ if there exists an isometric immersion
of $M^{m}$ in $\mathbb{R}^{q}$ with its second fundamental form  $B$ such that for all unit
tangent vectors $v$ to $M^{m}$ at every point $x\in M^{m},$ the
following functional is negative valued.
	
\begin{equation} 
{F_{\Phi_{(3)}}}_x (v)=\sum_{i=1}^{m}\big (6\langle B(v,e_{i}),B(v,e_{i})\rangle _{\mathbb R^q}-\langle B(v,v),B(e_{i},e_{i})\rangle_{\mathbb R^q}\big ),\label{11.8}
\end{equation}
where $\{e_{1},\cdots,e_{m}\}$ is a local orthonormal
frame field on $M^{m}$ near $x$.
\end{definition}

\begin{theorem}\label{T: 11.13}
Every $\Phi_{(3)}$-$\operatorname{SSU}$ manifold $M$ is $p$-$\operatorname{SSU}$ for any $2 \le p \le 6$.
\end{theorem}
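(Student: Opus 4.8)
The plan is to compare the defining quadratic functionals directly. Recall that $M$ being $\Phi_{(3)}$-$\operatorname{SSU}$ means that, for a suitable isometric immersion of $M^m$ into $\mathbb{R}^q$ with second fundamental form $B$, the functional
\[
{F_{\Phi_{(3)}}}_x(v) = \sum_{i=1}^m \big(6\langle B(v,e_i),B(v,e_i)\rangle_{\mathbb{R}^q} - \langle B(v,v),B(e_i,e_i)\rangle_{\mathbb{R}^q}\big)
\]
is negative for every unit $v\in T_xM$ and every $x$, while $M$ being $p$-$\operatorname{SSU}$ (for $2\le p\le 6$) means that, for the same immersion,
\[
F_{p,x}(v) = (p-2)\langle B(v,v),B(v,v)\rangle_{\mathbb{R}^q} + \langle Q^M_x(v),v\rangle_M
\]
is negative, where $\langle Q^M_x(v),v\rangle_M = \sum_{i=1}^m\big(2\langle B(v,e_i),B(v,e_i)\rangle_{\mathbb{R}^q} - \langle B(v,v),B(e_i,e_i)\rangle_{\mathbb{R}^q}\big)$. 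So I would use the \emph{same} isometric immersion for both.

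First I would rewrite $F_{p,x}(v)$ in a form that makes the dependence on $p$ transparent. Expanding $\langle Q^M_x(v),v\rangle_M$ and isolating the $i$ such that $e_i = v$ (choosing the orthonormal frame so that $e_1 = v$), one gets
\[
F_{p,x}(v) = p\,\langle B(v,v),B(v,v)\rangle_{\mathbb{R}^q} + 2\sum_{i=2}^m\langle B(v,e_i),B(v,e_i)\rangle_{\mathbb{R}^q} - \sum_{i=1}^m\langle B(v,v),B(e_i,e_i)\rangle_{\mathbb{R}^q}.
\]
Similarly,
\[
{F_{\Phi_{(3)}}}_x(v) = 6\,\langle B(v,v),B(v,v)\rangle_{\mathbb{R}^q} + 6\sum_{i=2}^m\langle B(v,e_i),B(v,e_i)\rangle_{\mathbb{R}^q} - \sum_{i=1}^m\langle B(v,v),B(e_i,e_i)\rangle_{\mathbb{R}^q}.
\]
Now the key point is that $\langle B(v,v),B(v,v)\rangle_{\mathbb{R}^q}\ge 0$ and $\sum_{i\ge 2}\langle B(v,e_i),B(v,e_i)\rangle_{\mathbb{R}^q}\ge 0$, and for $2\le p\le 6$ both coefficients in $F_{p,x}(v)$ are dominated by those in ${F_{\Phi_{(3)}}}_x(v)$: namely $p\le 6$ and $2\le 6$. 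Hence $F_{p,x}(v) \le {F_{\Phi_{(3)}}}_x(v) < 0$, which is exactly what $p$-$\operatorname{SSU}$ requires.

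The main obstacle — really the only thing to be careful about — is the bookkeeping in separating the diagonal term $i=1$ (where $e_i=v$) from the off-diagonal terms when rewriting both functionals, and confirming that the residual $\sum_i \langle B(v,v),B(e_i,e_i)\rangle_{\mathbb{R}^q}$ term (which has no definite sign) is \emph{identical} in the two expressions so that it simply cancels in the comparison ${F_{\Phi_{(3)}}}_x(v) - F_{p,x}(v) = (6-p)\langle B(v,v),B(v,v)\rangle_{\mathbb{R}^q} + (6-2)\sum_{i\ge 2}\langle B(v,e_i),B(v,e_i)\rangle_{\mathbb{R}^q} \ge 0$. Once that identity is verified, the inequality $2\le p\le 6$ gives the nonnegativity of the difference, and negativity of ${F_{\Phi_{(3)}}}_x$ passes to $F_{p,x}$, completing the proof.
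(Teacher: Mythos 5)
Your proof is correct and follows essentially the same route as the paper: both use the same isometric immersion and compare $F_{p,x}(v)$ with ${F_{\Phi_{(3)}}}_x(v)$ directly, using $\langle B(v,e_i),B(v,e_i)\rangle_{\mathbb R^q}\ge 0$ and $2\le p\le 6$ to pass the negativity from one functional to the other. The only (harmless) difference is bookkeeping: you split off the diagonal term exactly by choosing $e_1=v$, while the paper instead absorbs $(p-2)\langle B(v,v),B(v,v)\rangle_{\mathbb R^q}$ into the trace via $\langle B(v,v),B(v,v)\rangle_{\mathbb R^q}\le\sum_i\langle B(v,e_i),B(v,e_i)\rangle_{\mathbb R^q}$, so your exact cancellation of the $\sum_i\langle B(v,v),B(e_i,e_i)\rangle_{\mathbb R^q}$ term is a slightly cleaner version of the same comparison.
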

\begin{proof}
By Definition \ref{D: 11.11}, $\Phi_{(3)}$-$\operatorname{SSU}$ manifold enjoys 
\begin{equation}
\begin{aligned}
{F_{\Phi_{(3)}}}_x (v)=\sum_{i=1}^{m}\big (6\langle B(v,e_{i}),B(v,e_{i})\rangle _{\mathbb R^q}-\langle B(v,v),B(e_{i},e_{i})\rangle_{\mathbb R^q}\big )<0
\end{aligned}\label{11.7}
\end{equation}
for all unit tanget vector $v \in T_x(M)$. It follows from \eqref{11.2} and \eqref{11.8} that
\begin{equation}
\begin{aligned}
F_{p,x}(v)&=(p-2)\langle \mathsf B(v,v), B(v,v)\rangle _{\mathbb R^q} + \langle Q^M_x(v),v\rangle _M\\
& \le (p-2)\sum^n_{i=1} \bigg (2
\langle B(v,e_i), B(v, e_i)\rangle _{\mathbb R^q} \bigg )  \\
& \qquad + \sum^n_{i=1} \bigg (2
\langle B(v,e_i), B(v, e_i)\rangle _{\mathbb R^q}  -
\langle B(v,v),  B(e_i, e_i)\rangle _{\mathbb R^q} \bigg )\\
&\leq\sum_{i=1}^n\big (p\langle  B(v, e_i), B(v, e_i)\rangle-\langle B(v,v), B( e_i, e_i)\rangle\big ) \\
&\leq\sum_{i=1}^n\big (6\langle B(v,e_i), B(v, e_i)\rangle-\langle  B(v,v),B( e_i, e_i)\rangle\big )<0,
\end{aligned}
\end{equation}
for $2\leq p\leq 6$.
So by Definition \ref{D: 11.1}, $M$ is $p$-SSU for any $2\leq p\leq 6$.\\
\end{proof}

\begin{theorem}\label{T: 11.14}
Every compact $\Phi_{(3)}$-$\operatorname{SSU}$ manifold $M$ is $6$-connected\, , i.e., 

\begin{equation}
\pi_1(M) = \cdots = \pi_{6}(M) = 0.
\end{equation}

\end{theorem}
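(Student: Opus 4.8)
The plan is to deduce this purely topological statement from structure already in place rather than to run a fresh homotopy-theoretic argument: specifically, I would route the proof through the $p$-$\operatorname{SSU}$ category at the largest admissible exponent, combining the implication $\Phi_{(3)}$-$\operatorname{SSU}\Rightarrow p$-$\operatorname{SSU}$ for $2\le p\le 6$ (Theorem \ref{T: 11.13}) with the Topological Vanishing Theorem for $p$-$\operatorname{SSU}$ manifolds (Theorem \ref{T: 11.5}).

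First I would invoke Theorem \ref{T: 11.13} with $p=6$: since $M$ is compact and $\Phi_{(3)}$-$\operatorname{SSU}$, it is $6$-$\operatorname{SSU}$. Here it matters that $p=6$ is genuinely included, not merely a limiting value, and this is exactly what the chain of inequalities in the proof of Theorem \ref{T: 11.13} delivers, the final line $\sum_{i=1}^{m}\big(6\langle B(v,e_i),B(v,e_i)\rangle-\langle B(v,v),B(e_i,e_i)\rangle\big)<0$ being a strict inequality inherited by $F_{6,x}(v)$. Next I would apply the Topological Vanishing Theorem (Theorem \ref{T: 11.5}) in its $p$-$\operatorname{SSU}$ form to the compact $6$-$\operatorname{SSU}$ manifold $M$: since $[6]=6$, this yields $\pi_1(M)=\cdots=\pi_6(M)=0$, which is the assertion.

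Unwinding what Theorem \ref{T: 11.5} rests on, the substantive mechanism is that compact $p$-$\operatorname{SSU}\Rightarrow p$-$\operatorname{SU}$ (Theorem \ref{T: 11.3}), so that every homotopy class of maps from or into $M$ contains maps of arbitrarily small $p$-energy while $M$ admits no nonconstant stable $p$-harmonic map; representing a putatively nontrivial class in $\pi_k(M)$ with $k\le[p]$ by such low-$p$-energy maps and running the standard energy-infimum/regularity dichotomy forces $\pi_k(M)=0$. I would cite this rather than reprove it.

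Thus the real content has already been isolated in Theorems \ref{T: 11.13} and \ref{T: 11.5}; the step I would single out as the crux is the verification that the boundary exponent $p=6$ is admissible for the $p$-$\operatorname{SSU}$ property, since it is precisely this that pushes the connectivity conclusion up from $\pi_1=\cdots=\pi_5=0$ (which any $p<6$ already gives) to the full $\pi_1=\cdots=\pi_6=0$. Everything else is a direct appeal to the two cited theorems.
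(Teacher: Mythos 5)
Your proposal is correct and follows essentially the same route as the paper: the paper's proof also applies Theorem \ref{T: 11.13} at the boundary exponent $p=6$ and then invokes the fact that a compact $p$-$\operatorname{SSU}$ manifold is $[p]$-connected (the paper cites this directly from \cite[Theorem 3.10]{W5}, which is the same statement you access via Theorem \ref{T: 11.5}). Your additional remarks unpacking the mechanism behind that vanishing result are consistent with the paper but not needed for the proof.
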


\begin{proof}

Since every compact $p$-SSU is $[p]$-connected (cf.  \cite [Theorem 3.10 , p. 645] {W5}), and $p=6$ by the previous Theorem, the result follows. 
\end{proof}

\begin{theorem}[Sphere Theorem]\label{T: 11.15}
Every compact $\Phi_{(3)}$-$\operatorname{SSU}$ manifold $M$ of dimension $m \le 13$ is homeomorphic to an $m$-sphere.
\end{theorem}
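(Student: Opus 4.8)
The plan is to derive the statement directly from Theorem \ref{T: 11.14} together with standard algebraic topology and the topological generalized Poincar\'e conjecture. First I would record that by Theorem \ref{T: 11.14} a compact $\Phi_{(3)}$-$\operatorname{SSU}$ manifold $M^m$ satisfies $\pi_1(M)=\cdots=\pi_6(M)=0$; in particular $M$ is simply connected, hence orientable, and, being closed, satisfies Poincar\'e duality over $\mathbb Z$. Before anything else it is worth observing that the hypothesis is \emph{vacuous} for $m\le 6$: a closed connected orientable $m$-manifold has $H_m(M;\mathbb Z)=\mathbb Z$, so by the Hurewicz theorem $\pi_m(M)\cong H_m(M;\mathbb Z)\ne 0$ whenever $\pi_1=\cdots=\pi_{m-1}=0$, which contradicts $6$-connectedness once $m\le 6$. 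Hence we may assume $7\le m\le 13$ throughout.

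Next I would compute the integral homology of $M$. By the Hurewicz theorem, $6$-connectedness gives $H_i(M;\mathbb Z)=0$ for $1\le i\le 6$, and the universal coefficient theorem then gives $H^i(M;\mathbb Z)=0$ for $1\le i\le 6$. Poincar\'e duality $H_k(M;\mathbb Z)\cong H^{m-k}(M;\mathbb Z)$ now yields $H_k(M;\mathbb Z)=0$ for $m-6\le k\le m-1$. Since $m\le 13$ forces $m-6\le 7$, the two vanishing ranges $\{1,\dots,6\}$ and $\{m-6,\dots,m-1\}$ together cover every degree $1\le k\le m-1$; combined with $H_0(M;\mathbb Z)=H_m(M;\mathbb Z)=\mathbb Z$, this shows that $M$ has the integral homology of $S^m$. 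This is precisely the step where the bound $m\le 13$ is sharp: at $m=14$ the degree $k=7$ is covered by neither range, so the argument genuinely breaks down.

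It then remains to upgrade homological triviality to a homeomorphism. Since $M$ is simply connected with the homology of $S^m$, iterating the Hurewicz theorem gives $\pi_i(M)=0$ for $i<m$ and $\pi_m(M)\cong H_m(M;\mathbb Z)\cong\mathbb Z$; choosing a generator produces a map $f\colon S^m\to M$ inducing an isomorphism on all homology groups, so by Whitehead's theorem (both spaces simply connected) $f$ is a homotopy equivalence and $M$ is a homotopy $m$-sphere. Finally, Smale's solution of the generalized Poincar\'e conjecture (applicable since $m\ge 7$) shows that any homotopy $m$-sphere is homeomorphic to $S^m$, completing the argument. I do not expect a genuine technical obstacle here once Theorem \ref{T: 11.14} is in hand; the one point deserving care is that the conclusion cannot be promoted to a \emph{diffeomorphism}, since homotopy spheres need not be standard --- indeed Milnor's exotic $S^7$ (cf. \cite{M}) shows that a $\Phi_{(3)}$-$\operatorname{SSU}$ $7$-manifold may fail to be diffeomorphic to the round sphere --- so it is the topological, not the smooth, Poincar\'e conjecture that is the correct input.
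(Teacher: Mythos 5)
Your argument is correct and follows essentially the same route as the paper's proof: deduce $6$-connectedness from Theorem \ref{T: 11.14}, use the Hurewicz theorem and Poincar\'e duality to see that $m\le 13$ forces $M$ to be a homotopy $m$-sphere, and conclude by Smale's generalized Poincar\'e conjecture in dimension $\ge 5$. Your additional observations (the explicit check that $m\ge 7$, the Whitehead step, and the remark that only a homeomorphism, not a diffeomorphism, can be expected) merely make explicit what the paper leaves implicit.
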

\begin{proof}
In view of Theorem \ref{T: 11.13}, $
M$ is 6-connected. By the Hurewicz isomorphism theorem, the 6-connectedness of $M$ implies homology groups $H_1(M)=\cdots=H_6(M)=0$. It follows from Proincare Duality Theorem and the Hurewicz Isomorphism Theorem (\cite{SP}) again, $H_{m-6}(M)=\cdots=H_{m-1}(M)=0$, $H_{m}(M)\neq 0$ and $M$ is ($m-1$)-connected. Hence $N$ is a homotopy $m$-sphere. Since $M$ is $\Phi_{(2)}$-SSU, $m\geq 7$. Consequently, a homotopy $m$-sphere $M$ for $m \ge 5$ is homeomorphic to an $m$-sphere 
by a Theorem of Smale (\cite{Sm}).
\end{proof}

We summarize some of new manifolds found and these results obtained by an extrinsic average method in Table 1 in Section 1.

\bibliographystyle{amsalpha}

\begin{thebibliography}{SiSiYa}


\bibitem[A]{A} W.K. Allard, {\em On the first variation of a varifold}, Ann. Math. (2){\bf 95}(1972), 417-491.

\bibitem[Al]{Al}  F.J. Almgren, {\it Some interior regularity theorems for minimal surfaces and extension of Bernstein's theorem}. Ann. Math. (2) {\bf 84} (1966), 277-292.

\bibitem[Ar]{Ar}  M. Ara, {\it Geometry of $F-$harmonic maps}. Kodai Math. J. {\bf 22}(1999), 243-263.

\bibitem[AuL]{AuL}  T. Aubin and Y.Y. Li, {\it On the best Sobolev inequality}. J. Math. Pures Appl. (9) {\bf 78}(1999), 353-387.

\bibitem[Ba]{Ba}  P. Baird, {\it Stress-energy tensors and the Lichnerowicz Laplacian}. J. Geom. Phys. {\bf 58}(2008), 1329-1342.

\bibitem[BE]{BE}  P. Baird and J. Eells, {\it A conservation law for harmonic maps},
in: Geometry Symposium, Utrecht 1980. Lecture notes in Math. {894}, Springer (1982), 1-25.

\bibitem[Ber]{Ber} F. Bernstein, {\it \"Uber die isoperimetrische Eigenschaft des Kreises auf der Kugeloberfl\'ache und in der Ebene}. Math. Ann. {\bf 60}(1905), 117-136.

\bibitem[BI]{BI}  M. Born and L. Infeld, {\it Foundation of a new field theory}. Proc. R. Soc. London Ser. A. {\bf 144}(1934), 425-451.

\bibitem[BL]{BL} J.-P. Bourguignon and H.B. Lawson, {\it Stability and isolation phenomena for Yang-Mills fields}. Comm. Math. Phys. {\bf 79} (1981), 189-230.
 
 \bibitem[BLS]{BLS} J.-P. Bourguignon, H.B. Lawson, and J. Simons,  {\it Stability and gap phenomena for Yang-Mills fields.} Proc. Nat. Acad. Sci. U.S.A. {\bf 76}(1979), 1550-1553.

 \bibitem[CCW]{CCW} S.-C. Chang, J.-T. Chen, and S.W. Wei, {\it Liouville properties for $p$-harmonic maps with finite $q$-energy.} Trans. Amer. Math. Soc. {\bf 368}(2016), 787-825.
 
\bibitem[CW]{CW} B.-Y. Chen and S.W. Wei,  {\em Riemannian submanifolds with concircular canonical field}. Bull. Korean Math. Soc. {\bf 56}(2019), 1525-1537. 

\bibitem[CW2]{CW2} B.-Y. Chen and S.W. Wei, {\it $p$-harmonic morphisms, cohomology classes and submersions}. Tamkang J. Math. {\bf 40} (2009), 377–382. 

\bibitem[CW3]{CW3} B.-Y. Chen and S.W. Wei, {\it Sharp growth estimates for warping functions in multiply warped product manifolds}. J. Geom. Symmetry Phys. {\bf 52} (2019), 27-46.

\bibitem[D]{D} S.K. Donaldson {\em Mathematical uses of gauge theory},in: The Encyclopedia of Mathematical Physics, Eds. J.-P. Francoise, G. Naber, and T.S. Tsun, Elsevier, 2006.

\bibitem[Gi]{Gi}  E. de Giorgi, {\it Una estensione del theorema di Bernstein},
Ann. Scuola Norm. Sup. Pisa (3) {\bf 19}(1965), 79-85.

\bibitem[Du]{Du} O. Druet, {\it Isoperimetric inequalities on compact manifolds}. Geometriae Dedicata, {\bf 90}(2002), 217-236.

\bibitem[DW]{DW} Y. X. Dong and S.W. Wei, {\it On vanishing theorems for vector bundle valued $p$-forms and their applications}.  Comm. Math. Phy. {\bf 304}(2011), 329-368.

\bibitem[DLW]{DLW} Y. X. Dong, H. Z. Lin and S. W. Wei, {\it $L^2$ curvature pinching theorems and vanishing theorems on complete Riemannian manifolds}. Tohoku Math. J. (2){\bf 71} (2019), 581-607. 

\bibitem[EF]{EF} J. F. Escobar and  A. Freire {\em The spectrum of the Laplacian of manifolds of positive curvature}. Duke Math. J. 65 (1992), no. 1, 1–21. 

\bibitem[EL2]{EL2}   J. Eells and L. Lemaire {\em Selected topics in harmonic maps}.
CBMS Regional Conference Series in Mathematics, 50. Published for the Conference Board of the Mathematical Sciences, Washington, DC; by the American Mathematical Society, Providence, RI, 1983. v+85 pp. 

\bibitem[EL]{EL}   J. Eells and L. Lemaire {\em Some properties of exponentially harmonic maps}. Partial differential equations, Part 1, 2 (Warsaw, 1990), 129-136, Banach Center Publ. 27, Part 1, 2, Polish Acad. Sci. Inst. Math., Warsaw, 1992. 

\bibitem[ES]{ES}  J. Eells and  J. H. Sampson {\em Harmonic mappings of Riemannian manifolds}. Amer. J. Math. 86 (1964), 109–160.

\bibitem[FF]{FF}  H. Federer and  W. H. Fleming,  {\it Normal and
integral currents}, Ann. Math. {\bf 72} (1960), 458-520.

\bibitem[FHLW]{FHLW}  S. Feng, Y. Han, X. Li, and S.W. Wei, {\it The geometry of $\Phi_S$-harmonic maps}. J. Geom. Anal. {\bf 31}(2021), no. 10, 9469-9508. 

\bibitem[FHW]{FHW} S. Feng, Y. Han, and S.W. Wei, {\it Liouville type theorems and stability of $\Phi _{S,p}$-harmonic maps.} Nonlinear Anal. {\bf 212} (2021), Paper No. 112468, 38 pp.

\bibitem[FHJW]{FHJW} S. Feng, Y. Han, K. Jiang, and S.W. Wei, {\it The geometry of $\Phi _{(3)}$-harmonic maps.} {Preprint}.

\bibitem[F1]{Fl}  W.H. Fleming, {\it On the oriented Plateau problem}. Rend. Circ. Mat. Palermo (2){\bf 11}(1962), 69-90

\bibitem[FK]{FK} M. Freedman and R. Kirby, {\it A geometric proof of Rochlin's theorem}. Algebraic and geometric topology (Proc. Sympos. Pure Math., Stanford Univ., Stanford, Calif., 1976), Part 2, pp. 85–97, Proc. Sympos. Pure Math., XXXII, Amer. Math. Soc., Providence, R.I., 1978.

\bibitem[G]{G}  C. Gherghe, {\em On a gauge-invariant functional}, Proc. Edinb. Math. Soc. (2){\bf 53}(2010), 143-151.


\bibitem[Go]{Go} R.E. Gompf, {\it Three exotic R4's and other anomalies}. J. Differ. Geom. {\bf 18}(1983), 317-328.

\bibitem[GW]{GW}  R.E. Greene and H. Wu, {\it Function theory on manifolds which
posses a pole}. Lecture Notes in Math. {699}, Springer-Verlag, 1979.

\bibitem[HLRW]{HLRW} Y. Han, Y. Li, Y. Ren, and S.W. Wei {\it New comparison theorems in Riemannian geometry}. Bull. Inst. Math. Acad. Sin. (N.S.) {\bf 9}(2014), 163-186.

\bibitem[HL]{HL} R. Hardt and F.H. Lin {\it  Mapping minimizing
the $L^p$ norm of the gradient}. Comm.  Pure Appl.
Math. {\bf XL}(1987), 555-588.

\bibitem[HW]{HW} Y. Han and S.W. Wei, {\it $\Phi$-harmonic maps and $\Phi$-superstrongly unstable manifolds}.  J. Geom. Anal. {\bf 32} (2022), no. 1; (2022) 32:3.

\bibitem[HoW2]{HoW2} R. Howard and S.W. Wei, {\it Nonexistence of stable harmonic maps to and from certain homogeneous spaces and submanifolds of Euclidean space.} Trans. Amer. Math. Soc. {\bf 294}(1986), 319–331.
 
 \bibitem[HoW]{HoW} R. Howard and S.W. Wei, {\it On the existence and nonexistence of stable submanifolds and currents in positively curved manifolds and the topology of submanifolds in Euclidean spaces}. Geometry and Topology of Submanifolds and Currents. 127–167, Contemp. Math., {\bf 646} Amer. Math. Soc., Providence, RI, 2015. 

\bibitem[J]{J} J. Jost, {\it Riemannian Geometry and Geometric Analysis.} 7th edition. Universitext. Springer, Cham, 2017. xiv+697.
 
\bibitem[K]{K} R.C. Kirby, {\it The Topology of $4$-Manifolds}.
Lecture Notes in Math. 1374. Springer-Verlag, Berlin, (1989). vi+108 pp. 

\bibitem[L]{L} H.B. Lawson, {\it Minimal Varieties in Real and Complex Geometry}. S\'eminaire de Math\'ematiques Sup\'erieures, No. {\bf 57} (\'Et\'e 1973). Les Presses de l'Universit\'e de Montr\'eal, Montreal, Que., 1974. 100 pp.

\bibitem[La]{La} H. B. Lawson, {\it The Theory of Gauge Fields in Four Dimensions}. CBMS Regional Conference Series in Mathematics, {\bf 58} (1985).


\bibitem[LWW]{LWW} Y.I. Lee, A.N. Wang, and S.W. Wei, {\em On a generalized 1-harmonic equation and the inverse mean curvature flow.} J. Geom. Phys. {\bf 61}(2011), 453–461.

\bibitem[LY]{LY}  F. H. Lin and Y.S. Yang, {\it Gauged harmonic maps,
Born-Infeld electromagnetism, and Magnetic Vortices}.
Comm. Pure Appl. Math. {\bf LVI} (2003), 1631-1665.

\bibitem[Lu]{Lu} S. Luckhaus {\em Partial H\"older continuity for
minima of certain energies among maps into a Riemannian manifold}.
Indiana Univ. Math. J. {\bf 37}(1988), 349-367.

\bibitem[MU]{MU}  F. Matsuura and  H. Urakawa {\em On exponential Yang-Mills connections}. J. Geom. Phys. {\bf 17}(1995), 73-89. 

\bibitem[M]{M} J. Milnor, {\em On manifolds homeomorphic to the $7$-sphere.} Ann. Math. (2) {\bf 64} (1956), 399-405. 

\bibitem[Mo]{Mo} C.B. Morrey, {\em Multiple integrals in the calculus of variations}. Die Grundlehren der mathematischen Wissenschaften, Band 130 Springer-Verlag New York, Inc., New York 1966 ix+506 pp.

\bibitem[O]{O} Y. Ohnita,  {\it Stability of harmonic
maps and standard minimal immersion}. Tohoku Math. J. {\bf 38}(1986), 259-267.

\bibitem[PRS]{PRS} S. Pigola, M. Rigoli, and A.G. Setti, {\em Vanishing and finiteness results in geometric analysis. A generalization of the Bochner technique}. Progress in Mathematics, {\bf 266.} Birkhäuser Verlag, Basel, 2008. xiv+282 pp.

\bibitem[P]{P} P. Price, {\it A monotonicity formula for Yang-Mills fields}. Manuscripta Math. {\bf 43} (1983), 131-166.

\bibitem[PS]{PS}  P. Price and L. Simon, {\it Monotonicity formulae for Harmonic maps and Yang-Mills fields}, Preprint, Canberra 1982. Final version by P. Price, {\it A monotonicity formula for Yang-Mills fields},
Manus. Math. {\bf 43}(1983), 131-166.

\bibitem[SU]{SU}  R. Schoen, K. Uhlenbeck, {\it A regularity theory for harmonic
maps}. J. Diff. Geom. {\bf 17}(1982), 307-335.

\bibitem[Se1]{Se1}  H.C.J. Sealey, {\it Some conditions ensuring the vanishing of
harmonic differential forms with applications to harmonic maps and
Yang-Mills theory}. Math. Soc. Camb. Phil. Soc. {\bf 91}(1982), 441-452.

\bibitem[Se2]{Se2}  H.C.J. Sealey, {\it The stress energy tensor and vanishing of
$L^2$ harmonic forms}. Preprint.

\bibitem[SiSiYa]{SiSiYa}  L. Sibner, R. Sibner, and Y.S. Yang, {\it Generalized
Bernstein property and gravitational strings in Born-Infeld theory}.
Nonlinearity \textbf{20}(2007), 1193-1213.

\bibitem[Sm]{Sm} S. Smale, {\it Generalized Poincar\'{e} conjecture in dimension greater than four}, Ann. of Math. 74(1961) 391-406.

\bibitem[SP]{SP} E. Spanier, {\em Algebraic Topology}, McGraw-Hill Book Co., New York-Toronto, Ont.-London 1966 xiv+528 pp. 


\bibitem[W1]{W1} S.W. Wei, {\it An average process in the calculus of variations and the stability of harmonic maps}. Bull. Inst. Math. Acad. Sinica {\bf 11}(1983), 469-474. 

\bibitem[W2]{W2} S.W. Wei, {\it On topological vanishing theorems and the stability of Yang-Mills fields}. Indiana Univ. Math. J. {\bf 33} (1984), no. 4, 511-529. 

\bibitem[W3]{W3} S.W. Wei, {\it An extrinsic average variational method}. Recent developments in geometry (Los Angeles, CA, 1987), 55–78, Contemp. Math. {\bf 101} Amer. Math. Soc. Providence, RI, 1989. 

\bibitem[W4]{W4} S.W. Wei, {\it Liouville theorems and regularity of minimizing harmonic maps into super-strongly unstable manifolds.} Geometry and nonlinear partial differential equations (Fayetteville, AR, 1990), 131-154, Contemp. Math. {\bf 127}, Amer. Math. Soc. Providence, RI, 1992.

\bibitem[W5]{W5} S.W. Wei, {\it The minima of the p-energy functional}. Elliptic and parabolic methods in geometry (Minneapolis, MN, 1994), 171-203, A K Peters, Wellesley, MA, 1996. 

\bibitem[W6]{W6} S. W. Wei, {\it  Representing homotopy groups and spaces of maps by $p$-harmonic
maps}. Indiana Univ. Math. J. {\bf 47}(1998), 625-670.

\bibitem[W7]{W7}  S.W. Wei, {\it On 1-harmonic functions}, SIGMA Symmetry Integrability Geom. Methods Appl. {\bf 3} (2007), Paper 127, 10 pp.

\bibitem[W8]{W8}  S.W. Wei, {\it $p$-Harmonic geometry and related topics}. Bull. Transilv. Univ. Brasov Ser. III 1 ({\bf 50})(2008), 415-453.

\bibitem[W9]{W9} S.W. Wei, {\it The unity of $p$-harmonic geometry}. Recent developments in geometry and analysis, 439-483, Adv. Lect. Math. (ALM) {\bf 23}, Int. Press, Somerville, MA, 2012.

\bibitem[W10]{W10} S.W. Wei, {\em Growth estimates for  generalized harmonic forms on noncompact manifolds with geometric applications}. Geometry of Submanifolds, 247-269, Contemp. Math. {\bf 756} Amer. Math. Soc. Providence, RI, (2020).

\bibitem[W11]{W11}  S.W. Wei, {\it Dualities in comparison theorems and bundle-valued generalized harmonic forms on noncompact manifolds}. Sci. China Math. {\bf 64}(2021), 1649-1702.

\bibitem[W12]{W12} S.W. Wei, {\it On exponential Yang-Mills fields}. Proceedings RIGA 2021, 235–258, Ed. Univ. Bucureşti, Bucharest, 2021.

\bibitem[WLW]{WLW}
 S.W. Wei, J. F. Li, and L. Wu, 
{\em Generalizations of the Uniformization Theorem and Bochner's
Method in $p$-Harmonic Geometry}. Proceedings of the 2006 Midwest
Geometry Conference, Commun. Math. Anal. (2008), Conference 1, 46–68.

\bibitem[WW]{WW} S.W. Wei and B.Y. Wu, Generalized Hardy type and Caffarelli-Kohn-Nirenberg type inequalities on Finsler manifolds. Internat. J. Math. {\bf 31} (2020), no. 13, 2050109, 27 pp. 

\bibitem[WWZ]{WWZ} S.W. Wei, L. Wu and Y.S. Zhang, {\em Remarks on stable minimal hypersurfaces in Riemannian manifolds and generalized Bernstein problems}. Geometry and topology of submanifolds and currents. Contemp. Math. {\bf 646} Amer. Math. Soc., Providence, RI, (2015), 169-186. 

\bibitem[WY]{WY} S.W. Wei and C.-M. Yau, {\it Regularity of $p$-energy minimizing maps and $p$-superstrongly unstable indices.} J. Geom. Anal. {\bf 4}(1994), 247-272.  

\bibitem[WZ]{WZ} S.W. Wei and M. Zhu, Sharp isoperimetric inequalities and sphere theorems. Pacific J. Math. {\bf 220}(2005), 183–195.

\bibitem[Wh]{Wh} B. White {\it Infima of energy functionals in homotopy classes of mappings}. J. Differential Geom. 23 (1986), no. 2, 127–142.

\bibitem[Xi1]{Xi1}  Y.L. Xin, {\it Differential forms, conservation law and
monotonicity formula}. Scientia Sinica (Ser A) {\bf XXIX}(1986), 40-50.

\bibitem[Ya]{Ya} Y.S. Yang, {\it Classical solutions in the Born-Infeld theory},
Proc. R. Soc. Lond. A. \textbf{456}(2000), 615-640.

\end{thebibliography}

\end{document}